\newtheorem{teo}{Theorem}
\newtheorem{lemma}[teo]{Lemma}
\newtheorem{defi}[teo]{Definition}
\newtheorem{coro}[teo]{Corollary}
\newtheorem{propo}[teo]{Proposition}
\newtheorem*{propos}{Proposition}
\newtheorem*{thma}{Theorem A}
\newtheorem*{thmb}{Theorem B}
\newtheorem*{thmc}{Theorem C}
\newtheorem*{thmd}{Theorem D}
\newcommand{\R}{\mathbb{R}}
\newcommand{\ii}{\mathrm{i}}
\newcommand{\ind}{\textrm{ind}}
\newcommand{\eps}{\epsilon}
\newcommand{\N}{\mathbb{N}}
\newcommand{\C}{\mathbb{C}}
\newcommand{\CP}{\mathbb{C}\textrm{P}}
\newcommand{\RP}{\mathbb{R}\mathrm{P}}
\newcommand{\HP}{\mathbb{H}\mathrm{P}}
\newcommand{\Z}{\mathbb{Z}}
\newcommand{\D}{\mathcal{D}}
\renewcommand{\P}{\mathbb{P}}
\renewcommand{\L}{\mathcal{L}}
\newcommand{\Q}{\mathcal{Q}}
\theoremstyle{remark}
\newtheorem{remark}[]{Remark}
\newtheorem{fact}[]{Fact}
\newtheorem{example}[]{Example}
\title{Systems of Quadratic Inequalities}
\author{A. Agrachev}
\thanks{SISSA, Trieste \& Steklov Math. Inst.,
Moscow}
\author{ A. Lerario}
\thanks{SISSA, Trieste}
\begin{document}

\maketitle

\begin{abstract}
We present a spectral sequence which efficiently
computes Betti numbers of a closed semi-algebraic subset of
${\RP}^n$ defined by a system of quadratic inequalities and the
image of the homology homomorphism induced by the inclusion of
this subset in $\RP^n$. We do not restrict ourselves to the term
$E_2$ of the spectral sequence and give a simple explicit formula
for the differential $d_2$.
\end{abstract}

\section{Introduction}

In this paper we study closed semialgebraic subsets of ${\RP}^n$
presented as the sets of solutions of systems of homogeneous
quadratic inequalities. Systems are arbitrary: no regularity
condition is required and systems of equations are included as
special cases. Needless to say, standard Veronese map reduces any
system of homogeneous polynomial inequalities to a system of
quadratic ones (but the number of
inequalities in the system increases). The nonhomogeneous affine case will be
the subject of another publication.

To study a system of quadratic inequalities we focus on the dual
object. Namely, we take the convex hull, in the space of all real
quadratic forms on $\R^{n+1}$, of those quadratic forms involved
in the system, and we try to recover the homology of the set of
solutions from the arrangement of this convex hull with respect to
the cone of degenerate forms. This approach allows to efficiently
compute Betti numbers of the set of solutions for a very big number
of variables $n$ as long as the number of linearly independent
inequalities is limited. Moreover, this approach works well for
systems of integral quadratic inequalities (i.\,e. in the infinite
dimension, far beyond the semi-algebraic context) as we plan to
prove in another paper.

Let $p:\R^{n+1}\to\R^{k+1}$ be a homogeneous quadratic map and
$K\subset\R^{k+1}$ a convex polyhedral cone in $\R^k$ (zero cone
$K=\{0\}$ is permitted). We are going to study the semialgebraic
set
$$
X_p=\{\bar x=(x_0:\ldots:x_n)\in{\RP}^n \mid p(x_0,\ldots,x_n)\in
K\}.
$$
More precisely, we are going to compute the homology
$H_*(X_p;\Z_2)$ and the image of the map $\iota_*:H_*(X_p;\Z_2)\to
H_*({\RP}^n;\Z_2)$, where $\iota:X_p\to{\RP}^n$ is the inclusion.

In what follows, we use shortened notations
$H_*(X_p;\Z_2)=H_*(X_p),\ {\RP}^n=\P^n$.

Let $\mathcal Q$ be the space of real quadratic forms on $\mathbb
R^{n+1}$. Given $q\in\mathcal Q$, we denote by $\ii^+(q)\in\N$ the
positive inertia index of $q$ that is the maximal dimension of a
subspace of $\R^{n+1}$ where the form $q$ is positive definite.
Similarly, $\ii^-(q)\doteq\ii^+(-q)$ is the negative inertia
index. We set:
$$
\mathcal Q^j=\{q\in\mathcal Q : \ii^+(q)\ge j\}.
$$
We denote by $\bar p:{\R^{k+1}}^*\to\mathcal Q$ the linear systems
of quadratic forms associated to the map $p$. In coordinates:
$$
p=\left(\begin{smallmatrix}p^0\\
\vdots\\p^k\end{smallmatrix}\right),\quad p^i\in\mathcal Q,\qquad
\bar p(\omega)=\omega p=\sum\limits_{i=0}^k\omega_ip^i,\quad
\forall\,\omega=(\omega_0,\ldots,\omega_k)\in{\R^{k+1}}^*.
$$

More notations:
$$
K^\circ=\{\omega\in{\R^{k+1}}^* : \langle\omega,y\rangle\le 0,\
\forall y\in K\},\ \mathrm{the\ dual\ cone\ to}\ K;
$$
$$
\Omega=K^\circ\cap S^k=\{\omega\in K^\circ : |\omega|=1\};
$$
$$
C\Omega=K^\circ\cap B^{k+1}=\{\omega\in K^\circ : |\omega|\le 1\};
$$
$$
\Omega^j=\{\omega\in\Omega : \ii^+(\omega p)\ge j\}.
$$

\begin{thma} There exists a cohomological spectral sequence of the
first quadrant $(E_r,d_r)$ converging to $H_{n-*}(X)$ such that
$E_2^{ij}=H^i(C\Omega,\Omega^{j+1})$.
\end{thma}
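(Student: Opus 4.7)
The plan is to reduce the computation of $H_{n-*}(X)$, via Lefschetz duality, to that of a relative cohomology on $\P^n$; to replace the latter by a tautological pair built over $C\Omega$; and finally to extract the spectral sequence from the Leray spectral sequence of the projection to $C\Omega$.

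\textbf{Step 1 (duality).} Since $\P^n$ is a closed topological $n$-manifold, Lefschetz duality with $\Z_2$-coefficients gives $H_{n-k}(X) \cong H^k(\P^n, \P^n \setminus X)$ for the closed semi-algebraic subset $X \subset \P^n$. It therefore suffices to build a spectral sequence converging to $H^*(\P^n, \P^n \setminus X)$.

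\textbf{Step 2 (tautological pair).} Introduce
\[
B = \{(\omega,[x]) \in C\Omega \times \P^n : (\omega p)(x) \ge 0\}, \qquad B^+ = B \cap \{(\omega p)(x) > 0\}.
\]
The fiber of the projection $B \to \P^n$ over $[x]$ is $C\Omega \cap \{\omega : (\omega p)(x) \ge 0\}$, i.e.\ the intersection of the convex set $C\Omega$ with a closed linear half-space through the origin; it is convex and contains $0$, hence contractible. Similarly, the fiber of $B^+ \to \P^n$ over $[x]$ is either the (convex, hence contractible) open set $C\Omega \cap \{(\omega p)(x) > 0\}$ or empty, the latter occurring precisely when $[x] \in X$. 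Since $C\Omega$ is compact these projections are proper, and the Vietoris--Begle theorem (applied termwise together with the five-lemma on the long exact sequences of the pairs) yields
\[
H^*(B, B^+) \cong H^*(\P^n, \P^n \setminus X) \cong H_{n-*}(X).
\]

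\textbf{Step 3 (Leray spectral sequence and identification of $E_2$).} Use the second projection $\pi : (B, B^+) \to C\Omega$ and its Leray spectral sequence,
\[
E_2^{ij} = H^i(C\Omega; R^j \pi_*(B, B^+)) \Longrightarrow H^{i+j}(B, B^+).
\]
The fiber over $\omega$ is the pair $(\{(\omega p)(x) \ge 0\}, \{(\omega p)(x) > 0\}) \subset \P^n$. Rescaling along the rays of $C\Omega$ does not change the sign of $(\omega p)(x)$, so the sheaves $R^j\pi_*(B, B^+)$ are invariant under the cone structure. A deformation-retract argument using the spectral decomposition $\R^{n+1} = V^+_\omega \oplus V^0_\omega \oplus V^-_\omega$ of $\omega p$ shows that the relative cohomology of the fiber pair is governed by the positive inertia index $\ii^+(\omega p)$, with the jump loci being exactly the walls of the stratification $\Omega \supset \Omega^1 \supset \Omega^2 \supset \cdots$. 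Threading the resulting stratified local-cohomology data through the long exact sequences of the pairs $(C\Omega, \Omega^{j+1})$ reorganizes the $E_2$ page into the announced form $E_2^{ij} = H^i(C\Omega, \Omega^{j+1})$.

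\textbf{Main obstacle.} The delicate part is Step~3: the map $\pi$ is not a fibration, because the homotopy type of the fiber pair changes discretely as $\omega$ crosses $\partial \Omega^j$. The technical heart of the proof is to control the Leray sheaves $R^j\pi_*(B, B^+)$ across these walls and to verify that, after reindexing, they assemble precisely into $H^i(C\Omega, \Omega^{j+1})$ rather than into more complicated relative groups involving the individual strata $\Omega^j \setminus \Omega^{j+1}$.
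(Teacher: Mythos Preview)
Your Step~2 has a small technical slip: the projection $B^+\to\P^n\setminus X$ is \emph{not} proper, since $B^+$ is an open (not closed) subset of the compact $C\Omega\times\P^n$, so Vietoris--Begle does not apply directly. This is fixable by the center-of-mass section as in the paper's Lemma~1. The real problem is Step~3: your choice of pair $(B,B^+)$ produces the \emph{wrong} $E_2$. Over $\omega\neq 0$ the fibre pair is $(\{\omega p\ge 0\},\{\omega p>0\})\simeq(\P^{\,n-\ii^-(\omega p)},\P^{\,\ii^+(\omega p)-1})$, whose relative cohomology is $\Z_2$ precisely for $\ii^+(\omega p)\le j\le n-\ii^-(\omega p)$; it depends on \emph{both} inertia indices, not only on $\ii^+$. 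Hence the Leray sheaves are not governed by the single stratification $\Omega\supset\Omega^1\supset\cdots$, and the $E_2$ page is not $H^i(C\Omega,\Omega^{j+1})$.

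A concrete counterexample: take $n=1$, $K=\{0\}$, $p(x)=x_0^2-x_1^2$. Then $C\Omega=[-1,1]$ and $X=\{[1{:}1],[1{:}{-}1]\}$. For every $\omega\neq 0$ the fibre pair is (closed arc, open arc) with trivial relative cohomology, while near $\omega=0$ the pair over a small interval retracts (via projection to $\P^1$) to $(\P^1,\P^1\setminus X)$. Thus $R^1\pi_*$ is a skyscraper $\Z_2^2$ at $0$ and your $E_2^{0,1}=\Z_2^2$, $E_2^{1,0}=0$; whereas the formula in Theorem~A gives $E_2^{0,1}=H^0([-1,1],\emptyset)=\Z_2$ and $E_2^{1,0}=H^1([-1,1],\{\pm1\})=\Z_2$. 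Both spectral sequences abut to $H_{1-*}(X)$, but they are different at $E_2$. The paper avoids this by working with the pair $(\hat\Omega\times\P^n,\hat B)$, i.e.\ keeping all of $\P^n$ in the first slot and introducing the auxiliary positive form $q_0$ via $\hat p=(-q_0,p)$; then the fibre over $(\eta,\omega)$ is $(\P^n,\{\omega p-\eta q_0>0\})$, whose cohomology depends only on $\ii^+$. The $\epsilon q_0$-regularisation (replacing $\{>0\}$ by $\{\ge\epsilon\}$) is what restores properness and makes the stalk computations honest.
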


We define $\mu\doteq\max\limits_{\eta \in\Omega}\ii^{+}(\eta).$ If
$\mu=0$ then $X_p=\P^n$; otherwise we can describe the term
$E_{2}$ by the following table where cohomology groups are
replaced with isomorphic ones according to the long exact sequence
of the pair $(C\Omega, \Omega^{j+1})$.\\

$$
\begin{array}{c|c|c|c|c|c|c|c|c|}
&0&0&0&&&&&\\
n&\Z_{2}&0&0&&&&&\\
&\vdots&\vdots&\vdots&&&&&\\
\mu&\Z_{2}&0&0&\cdots&0&\cdots&0&0\\
&0&H^{0}(\Omega^{\mu})/\Z_{2}&H^{1}(\Omega^{\mu})&\cdots&H^{i}(\Omega^{\mu})&\cdots&H^{k}(\Omega^{\mu})&0\\
&\vdots&\vdots&\vdots&&\vdots&&\vdots&\vdots\\
&0&H^{0}(\Omega^{j+1})/\Z_{2}&H^{1}(\Omega^{j+1})&\cdots&H^{i}(\Omega^{j+1})&\cdots&H^{k}(\Omega^{j+1})&0\\
&\vdots&\vdots&\vdots&&\vdots&&\vdots&\vdots\\
&0&H^{0}(\Omega^{1})/\Z_{2}&H^{1}(\Omega^{1})&\cdots&H^{i}(\Omega^{1})&\cdots&H^{k}(\Omega^{1})&0\\
\hline
\end{array}
$$
\\\\
\begin{example}Let $n=k=2,\ p(x_0,x_1,x_2)=\left(\begin{smallmatrix}x_0x_1\\
x_0x_2\\x_1x_2\end{smallmatrix}\right),\ K=\{0\}$. Then
$$\Omega=\Omega^1=S^2,\quad \Omega^2=\{\omega\in S^2 :
\omega_0\omega_1\omega_2<0\},\quad \Omega^{3}=\emptyset.$$ The term
$E_2$ has the form:
$$
\begin{array}{|cccc}
\Z_2&0&0&0\\

0&({\Z_2})^3&0&0\\

0&0&0&\Z_2\\
\hline
\end{array}
$$
In this case $d_{2}:(\Z_{2})^{3}\to \Z_{2}$ is a non-vanishing differential and the set $X_p$ consists of 3 points.
\end{example}
 Let $\mathscr{G}_j=\left\{(V,q)\in Gr(j)\times\left(\mathcal
Q^j\setminus\mathcal Q^{j+1}\right) : q\bigr|_V>0\right\},$
where $Gr(j)$ is the Grassmannian of $j$-dimensional subspaces of
$\R^{n+1}$. It is easy to see that the projection $
\pi:(V,q)\mapsto q,\ (V,q)\in\mathscr{G}_j$ is a homotopy
equivalence.

Let us consider the tautological vector bundle $\mathcal{V}_j$ over
$\mathscr{G}_j$ whose fiber at $(V,q)\subset\mathscr{G}_j$ is the
space $V\in\R^{n+1}$ and the first Stiefel--Whitney class of this
bundle $w_1(\mathcal V_j)\in H^1(\mathscr{G}_j)$. Recall that
$w_1(\mathcal V_j)$ vanishes at a curve $f:S^1\to\mathscr{G}_j$ if
and only if $f^*\mathcal V_j$ is a trivial bundle. Moreover, the
value of $w_1(\mathcal V_j)$ at $f$ depends only on the curve
$\pi\circ f$ in $\mathcal Q^j\setminus\mathcal Q^{j+1}$ and
$w_1(\mathcal V_j)=\pi^*\nu_j$ for a well-defined class $\nu_j\in
H^1\left(\mathcal Q^j\setminus\mathcal Q^{j+1}\right)$.

\begin{propos} The differentials $d_2$, of the spectral sequence
$(E_r,d_r)$ is determined by the class $\bar
p\bigr|_{\Omega^j\setminus\Omega^{j+1}}^*(\nu_j)\in
H^1(\Omega^j\setminus\Omega^{j+1})$. If $\bar
p\bigr|_{\Omega^j\setminus\Omega^{j+1}}^*(\nu_j)=0,\
\forall\,j>0$, then $E_3=E_2$.
\end{propos}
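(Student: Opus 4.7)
The plan is to trace $d_2$ through the geometric construction underlying the spectral sequence and identify the relevant twist with $\bar p^*\nu_j$. Recall that the spectral sequence of Theorem A arises from a filtration of an auxiliary total space $\tilde B\to C\Omega$ whose fiber over $\omega$ deformation retracts onto the unit sphere of the positive eigenspace of $\omega p$; the filtration is by $\ii^+(\omega p)$, so the layer indexed by $j$ is supported over $\Omega^j$. Using the contractibility of $C\Omega$ and the long exact sequence of $(C\Omega,\Omega^{j+1})$, the differential $d_2:E_2^{ij}\to E_2^{i+2,j-1}$ reads as a map $\tilde H^{i-1}(\Omega^{j+1})\to\tilde H^{i+1}(\Omega^j)$.

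First I would realise $d_2$ at the cochain level in this model. A class $\alpha\in\tilde H^{i-1}(\Omega^{j+1})$ is lifted to a cocycle in $E_1^{ij}$; to compute $d_2\alpha$, one chooses a primitive for the restriction of $d_1\alpha$ across the stratum $\Omega^j\setminus\Omega^{j+1}$ and then applies the coboundary of the pair $(\Omega^{j-1},\Omega^j)$. Over $\Omega^j\setminus\Omega^{j+1}$ the fiber of $\tilde B$ is the sphere of the $j$-dimensional positive eigenspace, and coherent choice of this primitive requires a consistent orientation of the tautological rank-$j$ bundle of positive eigenspaces along the stratum. A standard Leray--Serre argument then expresses $d_2\alpha$ as the cup product of the naive connecting class with the first Stiefel--Whitney class of this eigenspace bundle.

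Next I would identify this Stiefel--Whitney class with $\bar p|_{\Omega^j\setminus\Omega^{j+1}}^*(\nu_j)$. The assignment $\omega\mapsto(V_\omega^+,\omega p)$ provides a canonical (up to homotopy) section $\sigma$ of $\pi:\mathscr G_j\to\mathcal Q^j\setminus\mathcal Q^{j+1}$ satisfying $\pi\circ\sigma=\bar p|_{\Omega^j\setminus\Omega^{j+1}}$; hence the eigenspace bundle in question is $\sigma^*\mathcal V_j$, and its first Stiefel--Whitney class is $\sigma^*\pi^*\nu_j=\bar p^*\nu_j$ by naturality. This shows that $d_2$ is determined by the classes $\bar p^*\nu_j$; in particular, if all of them vanish for $j>0$, then the primitive above can be chosen globally, $d_2=0$, and $E_3=E_2$.

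The main technical obstacle will be to carry out the cochain-level identification of $d_2$ rigorously in the stratified setting of $\tilde B\to C\Omega$. Because the fiber dimension jumps across the strata $\Omega^j\setminus\Omega^{j+1}$, the map $\tilde B\to C\Omega$ is not locally trivial and one cannot directly invoke the Leray--Serre formalism; one must either resolve the filtration into locally trivial pieces (for instance by blowing up along the walls) or work directly at the chain level with a Mayer--Vietoris-type argument to extract the twisted cup product. Once this ingredient is in place, the naturality of $w_1$ together with the homotopy equivalence $\pi$ reduces the computation to the statement of the proposition.
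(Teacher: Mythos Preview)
Your conceptual picture is correct---the differential $d_2$ is a twisted cup product, and the twist is the first Stiefel--Whitney class of the positive eigenspace bundle, which pulls back to $\bar p^*\nu_j$. The identification via the section $\sigma:\omega\mapsto(V_\omega^+,\omega p)$ is also right. But the ``main technical obstacle'' you flag in your last paragraph is not a detail to be filled in later: it is the entire content of the proof, and the paper's resolution is quite different from what you suggest.

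The paper does not blow up along the walls or try to salvage a Leray--Serre formalism. Instead it works with the pair $(\Omega\times\P^n,B)$ and the Leray spectral sequence of the \emph{first} projection $\beta_l$, then performs a direct \v Cech computation. Concretely: (i) it replaces $\bar p$ by the nondegenerate perturbation $f_\varepsilon=\bar p-\varepsilon q_0$, so that the index function is well-behaved and one can build explicit ``$f_\varepsilon$-regular'' covers (this is the content of Section~3); (ii) over each element $V_\alpha$ of such a cover it writes down an explicit cochain representative $\psi^{0,j}(\alpha)$ as the Poincar\'e dual in $\P^n$ of the projectivized \emph{negative} eigenspace $P_j^-(f_\varepsilon(\eta_\alpha))$; (iii) on overlaps $V_{\alpha_0\alpha_1}$ it joins these by the projectivization of the pulled-back bundle $\mathcal L_j^-$ along a curve, producing a bounding chain $T(\alpha_0\alpha_1,j-1)$; (iv) the \v Cech coboundary on triple overlaps then yields a loop in $\mathcal D_j$, and a short lemma on bundles over $S^1$ identifies the resulting class with $w_1(\mathcal L_j^-)=w_1(\mathcal L_j^+)$. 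Finally one passes to the limit over covers and over $\varepsilon$.

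Two points worth noting against your sketch. First, the paper uses $\mathcal L_j^-$ rather than $\mathcal L_j^+$: the negative eigenspace sits inside $\P^n\setminus\beta_r(B)$, so its Poincar\'e dual lies in the \emph{relative} cochain group, which is what one needs; the equality $w_1(\mathcal L_j^+)=w_1(\mathcal L_j^-)$ (Whitney sum) converts at the end. Second, the fibers in the paper's model are projective spaces, not spheres, and the cochain representatives are Poincar\'e duals of subprojective spaces---this is what makes the explicit boundary computation tractable. Your invocation of ``a standard Leray--Serre argument'' skips over all of this; without the regularization and the explicit \v Cech representatives there is no argument.
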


The classes $\nu_j$ are defined without any use of the Euclidean
structure on $\R^{n+1}$. This structure is however useful for the
explicit calculation of $d_2$. Given $q\in\mathcal Q$, let
$\lambda_1(q)\ge\cdots\ge\lambda_{n+1}(q)$ be the eigenvalues of the
symmetric operator $Q$ on $\R^{n+1}$ defined by the formula
$q(x)=\langle Qx,x\rangle,\ x\in\R^{n+1}$. Then $\mathcal
Q^j=\{q\in\mathcal Q : \lambda_j>0\}$. We set $\mathcal D
_j=\{q\in\mathcal Q : \lambda_j(q)\ne\lambda_{j+1}(q)\}$ and
denote by $\mathcal L^+_j$ the $j$-dimensional vector bundle over
$\D_j$ whose fiber at a point $q\in\Lambda_j$ equals
$span\{x\in\R^{n+1} : Qx=\lambda_ix,\ 1\le i\le j\}$. Obviously,
$\mathcal Q^j\setminus\mathcal Q^{j+1}\subset\mathcal D_j$ and
$\nu_j=w_1\left(\mathcal L^+_j\right)\bigr|_{\mathcal
Q^j\setminus\mathcal Q^{j+1}}$.

Now we set $\phi_j=\partial^* w_1(\mathcal L^+_j)$, where
$\partial^*:H^1(\mathcal D_j)\to H^2(\mathcal Q,\mathcal D_j)$ is
the connecting isomorphism in the exact sequence of the pair
$(\mathcal Q,\mathcal D_j)$. Recall that
$$
\mathcal Q\setminus\mathcal D_j=\{q\in\mathcal Q :
\lambda_j(q)=\lambda_{j+1}(q)\}
$$
is a codimension 2 algebraic
subset of $\mathcal Q$ whose singular locus
$$
sing\left(\mathcal Q\setminus\mathcal D_j\right)=\{q\in\mathcal Q
: (\lambda_{j-1}(q)=\lambda_{j+1}(q)) \vee
(\lambda_j(q)=\lambda_{j+2}(q))\}
$$
has codimension 5 in $\mathcal Q$. Let $f:B^2\to\mathcal Q$ be a
continuous map defined on the disc $B^2$ and such that $f(\partial
B^2)\subset\mathcal D_j$; the value of $\phi_j\in H^2(\mathcal
Q,\mathcal D_j)$ at $f$ equals the intersection number (modulo 2)
of $f$ and $\mathcal Q\setminus\mathcal D_j$.

\begin{thmb}[the differentials $d_2$] We have:
$$
d_2(x)=\left(x \smile\bar
p^*\phi_j\right)\bigr|_{(C\Omega,\Omega^j)}, \quad \forall\,x\in
H^*(C\Omega,\Omega^{j+1}),
$$
where $\smile$ is the cohomological product.
\end{thmb}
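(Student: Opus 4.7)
My plan is to use the Proposition as the main handle: it asserts that $d_2$ is determined by the pullback class $\bar p^*\nu_j\in H^1(\Omega^j\setminus\Omega^{j+1})$, so to prove Theorem B it suffices to check that the operation $x\mapsto(x\smile\bar p^*\phi_j)|_{(C\Omega,\Omega^j)}$ has the same dependence on this class and produces the same output as $d_2$. Before doing so I would verify the formula is well posed as a map $E_2^{i,j}\to E_2^{i+2,\,j-1}$. Since $\phi_j\in H^2(\mathcal Q,\mathcal D_j)$, its pullback is a class $\bar p^*\phi_j\in H^2(C\Omega,\,C\Omega\cap\bar p^{-1}(\mathcal D_j))$, and the cup product with $x\in H^i(C\Omega,\Omega^{j+1})$ lands in $H^{i+2}(C\Omega,\,\Omega^{j+1}\cup(C\Omega\cap\bar p^{-1}(\mathcal D_j)))$. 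The key inclusion $\Omega^j\subset\Omega^{j+1}\cup(C\Omega\cap\bar p^{-1}(\mathcal D_j))$—which holds because on $\Omega^j\setminus\Omega^{j+1}$ one has $\lambda_j(\omega p)>0\geq\lambda_{j+1}(\omega p)$, so $\bar p(\Omega^j\setminus\Omega^{j+1})\subset\mathcal D_j$—then induces the restriction to $H^{i+2}(C\Omega,\Omega^j)$ required in the statement.

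The core step is the identification of the two operations. I would unpack $\bar p^*\phi_j=\bar p^*\partial^*w_1(\mathcal L^+_j)$, use naturality of the connecting homomorphism with respect to $\bar p$, and apply the standard derivation identity relating cup products with coboundary classes in a pair sequence. Schematically this turns $x\smile\bar p^*\phi_j$ into a connecting-homomorphism image of a cup product involving the restriction of $x$ to $\bar p^{-1}(\mathcal D_j)\cap C\Omega$ and the pulled-back Stiefel--Whitney class $\bar p^*w_1(\mathcal L^+_j)$. Restricting further to $\Omega^j\setminus\Omega^{j+1}$, the Stiefel--Whitney factor becomes $\bar p^*\nu_j$, and one recovers exactly the cohomological class that the proof of the Proposition identifies with $d_2(x)$, namely the coboundary of the obstruction built from $x$ and $\bar p^*\nu_j$. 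Since both operations are natural and depend on $x$ only through this same combination, equality on $E_2$ follows.

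The hardest part will be the relative cohomology bookkeeping: the pairs $(C\Omega,\Omega^{j+1})$, $(C\Omega,\Omega^j)$, $(\mathcal Q,\mathcal D_j)$, and the pullback pair $(C\Omega,\,C\Omega\cap\bar p^{-1}(\mathcal D_j))$ all interact, and one must verify that the various restriction maps, pullbacks, connecting homomorphisms, and cup products commute on the nose. A clean approach is to descend to the cochain complex, where the derivation identity holds strictly via Alexander--Whitney, prove the equality of cochain representatives there, and then pass back to cohomology. As a consistency check I would evaluate both sides on the generator of $H^0(C\Omega,\Omega^{j+1})/\Z_2$: each reduces to the intersection-number description of $\phi_j$ on a test 2-disc $f:B^2\to C\Omega$ with $f(\partial B^2)\subset\Omega^{j+1}$, namely the mod-2 count of transverse intersections of $\bar p\circ f$ with $\mathcal Q\setminus\mathcal D_j$, which is the geometric obstruction computed from $\bar p^*\nu_j$ in the Proposition.
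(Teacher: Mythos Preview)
Your approach has a genuine gap: you take the Proposition as a black box whose ``proof identifies $d_2(x)$ with a specific cohomological class'' built from $\bar p^*\nu_j$, and then propose to match the formula of Theorem~B against that. But the Proposition is not proved independently anywhere in the paper; it is stated in the Introduction as a summary, and its content is a \emph{consequence} of Theorem~B, not a lemma available beforehand. There is no separate computation of $d_2$ in terms of $\bar p^*\nu_j$ for you to compare against. Even granting the bare statement ``$d_2$ is determined by $\bar p^*\nu_j$'', that only says two maps with the same pullback class give the same $d_2$; it does not hand you a formula for $d_2$ in terms of that class, so your matching argument has nothing to anchor to.

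The paper's proof is of an entirely different nature. The substance is Theorem~\ref{basicdiff}: one works in the \v{C}ech--Leray bicomplex for $\beta_l:C(\epsilon)\to\Omega$ over an $f_\epsilon$-regular cover, and builds explicit cochain representatives. For each vertex $\alpha$ one takes the Poincar\'e dual $\nu(\alpha,j)$ of the negative projective subspace $N(\alpha,j)=P_j^-(f_\epsilon(\eta_\alpha))$; for each edge one takes the dual $\tau(\alpha_0\alpha_1,j-1)$ of the swept-out chain coming from the projectivized bundle $c_{\alpha_0\alpha_1}^*f_\epsilon^*\mathcal L_j^-$ along a path. A direct computation shows $\delta_K\psi^{0,j}=d_K\theta^{1,j-1}$, and then $\delta_K\theta^{1,j-1}$ is evaluated on 2-simplices via a Leray--Hirsch identity for projectivized bundles over $S^1$, which is exactly where $w_1(\mathcal L_j^\pm)$ enters. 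Passing to the limit over covers and over $\epsilon$ yields the cup-product formula. Theorem~B itself is then a short deduction: apply Theorem~\ref{basicdiff} to $(\hat p,\hat K)$ and use the retraction $(\hat\Omega,\hat D_j)\to(\hat\Omega,D_j)\simeq(C\Omega,D_j)$. Your well-posedness check (the inclusion $\Omega^j\subset\Omega^{j+1}\cup\bar p^{-1}(\mathcal D_j)$) is correct and is indeed the first observation in the paper's argument, but the actual identification of $d_2$ requires this explicit cochain-level work, not the formal cup/coboundary manipulations you outline.
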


\begin{thmc} Let $(\iota_*)_{a}:H_a(X_p)\to H_a(\P^n),\
0\le a\le n$, be the homomorphism induced by the inclusion
$\iota: X_p\to\P^n$. Then $\mathrm{rk}(\iota_*)_{a}=\dim
E_\infty^{0,n-a}$.
\end{thmc}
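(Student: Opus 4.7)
\emph{Plan.} My strategy is to compare the spectral sequence of Theorem A for $X_p$ with the degenerate analogue for $\P^n$ and to identify the zeroth column of the $E_\infty$-page with $\mathrm{im}(\iota_*)_a$. The inclusion $\iota:X_p\hookrightarrow\P^n$ corresponds to enlarging the cone $K$ to $K'=\R^{k+1}$: for this choice $X_{p,K'}=\P^n$, $(K')^\circ=\{0\}$, $C\Omega'=\{0\}$, and $\Omega'^{j+1}=\emptyset$, so the corresponding spectral sequence collapses at $E_2$ to the $i=0$ column with $E_\infty^{0,j}(\P^n)=H_{n-j}(\P^n)=\Z_2$ for $0\le j\le n$. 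By the functoriality of the construction underlying Theorem A, this induces a morphism of spectral sequences $\phi_r:E_r(X_p)\to E_r(\P^n)$ whose abutment realizes $(\iota_*)_a$.

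\emph{Reduction to injectivity of $\phi_\infty$.} The spectral sequence of Theorem A arises from filtering a model $\tilde X$ for $X_p$, producing a decreasing filtration $F^\bullet$ on $H_*(X_p)$ with $E_\infty^{i,j}\cong F^iH_{n-i-j}/F^{i+1}H_{n-i-j}$; in particular $E_\infty^{0,n-a}\cong H_a(X_p)/F^1H_a(X_p)$. From the explicit table in the statement of Theorem A, $E_2^{0,n-a}\in\{0,\Z_2\}$, so already $\dim E_\infty^{0,n-a}\le 1$. The factorization
\[
(\iota_*)_a:H_a(X_p)\twoheadrightarrow E_\infty^{0,n-a}(X_p)\xrightarrow{\;\phi_\infty\;}E_\infty^{0,n-a}(\P^n)=H_a(\P^n)
\]
then reduces the rank equality to proving that $\phi_\infty$ is injective.

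\emph{Injectivity and main obstacle.} At $E_2$ the map $\phi_2^{0,n-a}:H^0(C\Omega,\Omega^{n-a+1})\to H^0(\{0\},\emptyset)=\Z_2$ is the restriction induced by $\{0\}\hookrightarrow C\Omega$ together with $\emptyset\subseteq\Omega^{n-a+1}$; it is an isomorphism when the source is $\Z_2$ (the generator being sent to the generator) and trivially injective when the source vanishes. Injectivity propagates inductively to every page, because the zeroth column of a first-quadrant spectral sequence receives no differentials: $\ker\phi_{r+1}^{0,n-a}\subseteq\ker\phi_r^{0,n-a}=0$. Hence $\phi_\infty$ is injective and $\mathrm{rk}(\iota_*)_a=\dim E_\infty^{0,n-a}$. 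The main obstacle is the first step, namely verifying that the functorial comparison $(p,K)\to(p,\R^{k+1})$ indeed yields a morphism of spectral sequences whose abutment is $(\iota_*)_a$; this requires a careful bookkeeping of the filtered model $\tilde X$ underlying the construction of Theorem A and of the compatibility of its stratification by the inertia subsets $\Omega^j$ with the inclusion of dual cones $(K')^\circ\subset K^\circ$.
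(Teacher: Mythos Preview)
Your approach is essentially the same as the paper's, though framed slightly differently. The paper compares the Leray bicomplex $E_0$ for the pair $(\hat\Omega\times\P^n,\hat B)$ with the K\"unneth bicomplex $K_0$ for $(\hat\Omega\times\P^n,\emptyset)$ over the \emph{same} base $\hat\Omega$, via the inclusion $\emptyset\subset\hat B$; you instead shrink the base by passing from $K$ to $K'=\R^{k+1}$, which collapses $\hat\Omega$ to a point. These two comparisons are equivalent: since $\hat\Omega$ is contractible, restricting $H^*(\hat\Omega\times\P^n)$ to a point is an isomorphism, so your map factors through the paper's map by an isomorphism.

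The ``main obstacle'' you flag---that the comparison morphism really abuts to $(\iota_*)_a$---is exactly what the paper spends most of Section~5 and the first half of the proof of Theorem~C establishing. It is not automatic: one needs Lemma~15 (compatibility of the Mayer--Vietoris $f^*$ isomorphisms with the inclusion of bicomplexes) together with the explicit Poincar\'e-duality square at the start of the proof to identify $i_0^*$ on $H_D^*$ with $j^*$, and then the edge projection $p_E$ to pass to $E_\infty^{0,*}$. Your sketch is correct once this is in place, but you should be aware that this compatibility is where the actual work lies; the paper's Remark after Lemma~15 even warns that $i_\infty$ and $j^*$ need not agree in general bidegrees.

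Your injectivity argument for $\phi_\infty^{0,n-a}$ is a clean reformulation of the paper's endgame. The paper instead argues case by case: if $E_\infty^{0,n-b}\ne 0$ then $\Omega^{n-b+1}=\emptyset$, forcing $E_\infty^{0,n-b}=E_2^{0,n-b}=\Z_2$ and $i_\infty^{0,n-b}=i_2^{0,n-b}$, which is visibly the identity. Your version---$\phi_2^{0,*}$ injective plus no incoming differentials in column zero implies $\phi_r^{0,*}$ injective for all $r$---is equivalent and arguably more transparent.
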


Next theorem about hyperplane sections is a step towards the
understanding of functorial properties of the duality between the
semi-algebraic sets $X_p$ and the index functions $\mathrm
i^+\circ\bar p$.

Let $V$ be a codimension one subspace of $\R^{n+1}$ and $\bar
V\subset \RP^n$ the projectivization of $V$.
We define for $j>0$ the following sets:
$$
\Omega_V^{j}=\{\omega \in \Omega : \mathrm i^{+}\left(\omega p |_V\right)\geq j\}$$
\begin{thmd}
There exists a cohomology spectral sequence $(G_{r},d_{r})$ of the
first quadrant converging to $H_{n-*}(X_p, X_p\cap \bar V)$ such
that
$$
G_{2}^{i,j}=H^{i}(\Omega_V^{j},\Omega^{j+1}),\
j>0, \quad G_{2}^{i,0}=H^{i}(C\Omega,\Omega^1).$$
\end{thmd}

Theorem A is proved in Section 2, the differential $d_2$ is
computed in Sections 3, 4, Theorem~3 on the imbedding to $\RP^n$
is proved in Section~5, and Theorem~D  on the hyperplane sections
in Section~6. In Section~7 we study a special case of the constant
index function where higher differentials can be easily computed
and consider some other examples.

\medskip Let us indicate the main general ideas these proofs are based on.

\smallskip\noindent {\bf Regularization.} Polynomial inequalities can be easily
regularized without change of the homotopy type of the space of
solutions. Indeed, given a polynomial $a$, the space of solutions
of the inequality $a(x)\le 0$ is a deformation retract of the
space of solutions of the inequality $a(x)\le\varepsilon$ for any
sufficiently small $\varepsilon>0$, and the inequality
$a(x)\le\varepsilon$ is regular for any $\varepsilon$ from the
complement of a discrete subset of $\R$. The regularization of the
equation $a(x)=0$ is a system of inequalities $\pm
a(x)\le\varepsilon$.

\medskip\noindent {\bf Duality.} The map $\bar p:K^\circ\to\mathcal
Q$ is the dual object to $X_p$. Moreover, $\P^n\setminus X_p$ is
homotopy equivalent to $B=\{(\omega,x\in\Omega\times\P^n : (\omega p)(x)>0\}$. For a regular system of quadratic inequalities,
our spectral sequence is the relative Leray spectral system of the
map $(\omega,x)\mapsto\omega$ applied to the pair
$(\Omega\times\P^n,B)$.

\medskip\noindent {\bf Localization.} Given $\omega_0$, we have:
 $B_{O_{\omega_0}}\approx B_{\omega_0}\approx\P^{\ii^+(\omega_{0} p)-1}$, where
$O_{\omega_0}$ is any sufficiently small contractible neighborhood
of $\omega_0$ and $\approx$ is the homotopy equivalence. This fact
allows to compute the member $E_2$ of the spectral sequence.

\medskip\noindent {\bf Regular homotopy.} This is perhaps the most
interesting tool which allows to compute the differential $d_2$.
The notion of regular homotopy is based on the dual
characterization for the regularity of a system of quadratic
inequalities. We say that the system defined by the map $p$ and
cone $K$ is regular if $p\bigr|_{\R^{n+1}\setminus\{0\}}$ is
transversal to $K$; in other words, if
$\mathrm{im}(D_xp)+T_{p(x)}K=\R^{k+1},\
\forall\,x\in\R^{n+1}\setminus\{0\}$ such that $p(x)\in K$.

The dual characterization of regularity concerns the linear map
$\bar p:\Omega\to\mathcal Q$ but can be naturally extended to any
smooth map $f:\Omega\to\mathcal Q$. Note that $\mathcal Q$ is the
dual space to $\R^{n+1}\odot\R^{n+1}$, the symmetric square of
$\R^{n+1}$. Let
$$
\mathcal Q_0=\{q\in\mathcal Q : \ker q\ne 0\},
$$
the discriminant of the space of quadratic forms. Then $\mathcal
Q_0$ is an algebraic hypersurface and
$$
\mathrm{sing}\mathcal Q_0=\{q\in\mathcal Q_0 : \dim\ker q>1\}.
$$
Given $q\in\mathcal Q_0\setminus\mathrm{sing}\mathcal Q_0$ and
$x\in\ker q\setminus 0$, the vector $x\odot x\in\mathcal Q^*$ is
normal to the hypersurface $\mathcal Q_0$ at $q$. We define a
co-orientation of $\mathcal Q_0\setminus\mathrm{sing}\mathcal Q_0$
by the claim that $x\odot x$ is a positive normal. For any, maybe
singular, $q\in\mathcal Q_0$ we define the {\it positive normal
cone} as follows:
$$
N^+_q=\{x\odot x : x\in\ker q\setminus 0\}.
$$
The cone $N^+_q$ consists of the limiting points of the sequences
$N^+_{q_i},\ i\in\N,$ where $q_i\in\mathcal Q_0\setminus
\mathrm{sing}\mathcal Q_0$ and $q_i\to q$ as $i\to\infty$.

We say that $f:\Omega\to\mathcal Q$ is {\it not regular} (with
respect to $\mathcal Q_0$) at $\omega\in\Omega$ if
$f(\omega)\in\mathcal Q_0$ and $\exists\,y\in N^+_\omega$ such
that $\langle D_\omega fv,y\rangle\le 0,\ \forall\,v\in
T_\omega\Omega$. The map $f$ is regular if it is regular at any
point. It is easy to check that the transversality of the
quadratic map $p\bigr|_{\R^{n+1}\setminus\{0\}}$ to the cone $K$
is equivalent to the regularity of the linear map $\bar
p:\Omega\to\mathcal Q$ where, we remind, $\Omega=K^\circ\cap S^k$.

A homotopy $f_t:\Omega\to\mathcal Q,\ 0\le t\le 1,$ is a {\it
regular homotopy} if all $f_t$ are regular maps. The following
fundamental geometric fact somehow explains the results of this
paper and gives a perspective for further research. If linear maps
$\bar p_0,\bar p_1$ are regularly homotopic then the pairs $(\P^n,\P^n\setminus X_{p_0})$ and
             $(\P^n,\P^n\setminus X_{p_1})$ are homotopy equivalent. Note that the maps $f_t$ in the
homotopy connecting $\bar p_0$ and $\bar p_1$ are just smooth, not
necessary linear. It is important that the cones $N^+_q,\ q\in
\mathrm{sing}\mathcal Q_0,$ are not convex. If $N^+_q$ would be
convex then regular homotopy would preserve the term $E_2$ of our
spectral sequence, the differentials $d_r,\ r\ge 2$, would vanish
and $E_2$ would be equal to $E_\infty$.

Regular homotopy was introduced in paper \cite{Agrachev2} devoted
to regular quadratic maps. In the mentioned paper, the term $E_2$
and the differential $d_2$ of a converging to the homology of the
double covering of $X_p$ spectral sequence were computed. Again
for a regular quadratic map, all the differentials of a spectral
sequence converging to $H^*(\P^n\setminus X_p)$ were announced
(without proof) in \cite{agr}. We have to confess that,
unfortunately, only the differential $d_2$ was computed correctly.
Universal upper bounds for the Betti numbers of the sets defined
by systems of quadratic inequalities or equations were obtained in
\cite{Ba,BaKe,BaPaRo,DeItKh}.

\medskip\noindent {\sl Remark.} An Hermitian quadratic form is a
quadratic form $q:\C^{n+1}\to\R$ such that $q(iz)=q(z)$.
Similarly, a ``quaternionic'' quadratic form is a quadratic form
$q:\mathbb H^{n+1}\to\R$ such that $q(iw)=q(jw)=q(w).$ There are
obvious Hermitian and ``quaternionic" versions of the theory
developed in this paper (for systems of Hermitian or
``quaternionic" quadratic inequalities). You simply substitute
$\RP^n$ with $\CP^n$ or $\HP^n$, Stiefel--Whitney classes with
Chern or Pontryagin classes, differentials $d_r$ with
differentials $d_{2r-1}$ or $d_{4r-3}$, and compute homology with
coefficients in $\Z$ instead of $\Z_2$ (see also \cite{agr}).

\section{The spectral sequence}
Using the above notations for $\Omega=K^{\circ}\cap S^{k}\subset (\R^{k+1})^{*}$ we define
$$B=\{(\omega, x)\in \Omega \times \P^{n}\, : \, (\omega p)(x)>0\}.$$
Notice that the previous definition makes sense since for every $\omega \in \Omega$ the map $p\omega$ is homogeneous of degree \emph{two}.
The following lemma relates the topology of $B$ to that of $X.$

\begin{lemma}\label{lemmahomeq}The projection $\beta_{r}$ on the second factor defines a homotopy equivalence between $B$ and $\P^{n}\backslash X=\beta_{r}(B).$
\end{lemma}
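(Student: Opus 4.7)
\medskip\noindent\textbf{Plan.} The plan is to construct an explicit continuous section $s\colon\P^n\setminus X_p\to B$ of $\beta_r$, and then to deform $\mathrm{id}_B$ fiberwise onto $s\circ\beta_r$ using the convexity of $K^\circ$. I would obtain the section from the Moreau decomposition of $\R^{k+1}$ with respect to the pair of mutually polar closed convex cones $K$ and $K^\circ$: every vector is written uniquely as $y=y_K+y_{K^\circ}$ with $y_K\in K$, $y_{K^\circ}\in K^\circ$, and $\langle y_K,y_{K^\circ}\rangle=0$. Applied to $y=p(x)$ for $\bar x\in\P^n\setminus X_p$ this yields $y_{K^\circ}\ne 0$ (because $p(x)\notin K$) together with the positivity
$$
\langle y_{K^\circ},p(x)\rangle=|y_{K^\circ}|^2>0.
$$
Homogeneity of degree two of $p$ makes the normalized vector $\omega(\bar x):=y_{K^\circ}/|y_{K^\circ}|\in\Omega$ well-defined on $\P^n$, and continuity of nearest-point projection onto $K$ makes $s(\bar x):=(\omega(\bar x),\bar x)$ continuous. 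By construction $\beta_r\circ s=\mathrm{id}$ and $s$ lands in $B$, which as a byproduct gives $\beta_r(B)=\P^n\setminus X_p$ (the opposite inclusion is immediate: if $\omega\in K^\circ$ and $p(x)\in K$, then $\langle\omega,p(x)\rangle\le 0$).

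\medskip\noindent\textbf{Homotopy inverse.} Next I would build a straight-line fiberwise homotopy. Over any $\bar x\in\P^n\setminus X_p$, the fiber $\beta_r^{-1}(\bar x)$ is the intersection of $\Omega$ with the open half-space $\{\omega:\langle\omega,p(x)\rangle>0\}$. For $\omega$ in this fiber the segment $(1-t)\omega+t\,\omega(\bar x)$ stays in $K^\circ$ by convexity, and its pairing with $p(x)$ is a strictly positive affine function of $t$; in particular the segment avoids the origin. Radial renormalization therefore produces a continuous map $H\colon B\times[0,1]\to B$ with $H_0=\mathrm{id}_B$ and $H_1=s\circ\beta_r$. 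Together with $\beta_r\circ s=\mathrm{id}$ this exhibits $\beta_r$ as a homotopy equivalence with homotopy inverse $s$.

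\medskip\noindent\textbf{Main obstacle.} Most of the argument is mechanical once the section $s$ is in hand; the main thing to verify carefully is that $s$ is continuous and descends to $\P^n$. Descent comes from $p(\lambda x)=\lambda^2 p(x)$ combined with the compatibility of Moreau's decomposition with positive scalings, so that $\omega(\lambda x)=\omega(x)$ for every $\lambda\ne 0$. Continuity reduces to the standard $1$-Lipschitz continuity of the metric projection onto the closed convex cone $K$. The only geometric input throughout is the convexity of $K^\circ$; without it neither the section nor the fiberwise deformation would make sense.
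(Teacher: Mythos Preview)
Your argument is correct and follows the same overall strategy as the paper: construct a continuous section of $\beta_r$ and then deform $\mathrm{id}_B$ onto $s\circ\beta_r$ by normalized straight-line homotopy inside the convex cone $K^\circ$. The only difference is in how the section is chosen. The paper takes, for each $\bar x\in\P^n\setminus X$, the center of gravity of the fiber $\beta_r^{-1}(\bar x)=\Omega\cap\{\omega:\langle\omega,p(x)\rangle>0\}$ and then normalizes; you instead use the Moreau decomposition $p(x)=P_K(p(x))+P_{K^\circ}(p(x))$ and normalize the $K^\circ$-component. Both choices produce a continuous section landing in $B$, and both feed into the identical fiberwise deformation. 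Your route has the small advantage that continuity of $s$ reduces to the standard $1$-Lipschitz property of metric projection onto a closed convex set, whereas the paper's barycenter argument needs a word about why the centroid of the (varying, open, bounded) fiber depends continuously on $\bar x$. Conversely, the barycenter construction is slightly more self-contained, avoiding the citation of Moreau's theorem. Either way, the substance of the proof---convexity of $K^\circ$ plus a normalized linear homotopy---is the same.
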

\begin{proof}The equality $\beta_{r}(B)=\P^{n}\backslash X$ follows from $(K^{\circ})^{\circ}=K.$ For every $x\in \P^{n}$ the set $\beta_{r}^{-1}(x)$ is the intersection of the set $\Omega\times\{x\}$ with an open half space in $(\R^{k+1})^{*}\times \{x\}.$ Let $(\omega_{x},x)$ be the center of gravity of the set $\beta_{r}^{-1}(x).$ It is easy to see that $\omega_{x}$ depends continuosly on $x\in \beta_{r}(B).$ Further it follows form convexity considerations that $(\omega_{x}/\,\|\omega_{x}\|,x)\in B$ and for any $(\omega, x)\in B$ the arc $(\frac{t\omega_{x}+(1-t)\omega_{x}}{\|t\omega_{x}+(1-t)\omega_{x}\|},x),\, 0\leq t\leq1$ lies entirely in $B.$ It is clear that $x\mapsto (\omega_{x}/\,\|\omega_{x}\|,x), \, x\in \beta_{r}(B)$ is a homotopy inverse to $\beta_{r}.$
\end{proof}

We first construct a slightly more general spectral sequence $(F_{r},d_{r})$ converging to $H^{*}(\Omega\times \P^{n},B)$ which in general is \emph{not} isomorphic to $H_{n-*}(X).$ The required spectral sequence $(E_{r},d_{r})$ arises by applying the following Theorem to a modification $(\hat{q},\hat{K})$ of the pair $(q,K)$ such that $H^{*}(\hat{\Omega}\times \P^{n},\hat{B})\simeq H_{n-*}(X).$

\begin{teo} \label{basiccoo} There exists a cohomology spectral sequence of the first quadrant $(F_{r},d_{r})$ converging to $H^{*}(\Omega\times \P^{n},B;\Z_{2})$ such that for every $i,j\geq0$ $$F_{2}^{i,j}=H^{i}(\Omega, \Omega^{j+1};\Z_{2}).$$
\end{teo}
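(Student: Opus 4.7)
The plan is to realize $(F_r,d_r)$ as the Leray spectral sequence of the first-factor projection $\pi\colon\Omega\times\P^n\to\Omega$ applied to the pair $(\Omega\times\P^n,B)$. Since $\P^n$ is compact, $\pi$ is proper, so the general theory produces a first-quadrant cohomology spectral sequence
$$F_2^{i,j}=H^i(\Omega;\mathcal{F}^j)\Longrightarrow H^{i+j}(\Omega\times\P^n,B;\Z_2),$$
where $\mathcal{F}^j:=R^j\pi_*$ is the sheaf on $\Omega$ of relative cohomology along the fibers of $\pi$. The whole task then reduces to identifying $\mathcal{F}^j$ and showing $H^i(\Omega;\mathcal{F}^j)\cong H^i(\Omega,\Omega^{j+1};\Z_2)$.

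The stalks of $\mathcal{F}^j$ are computed via the localization property advertised in the introduction: for any $\omega_0\in\Omega$ a sufficiently small contractible neighborhood $O_{\omega_0}$ satisfies $B\cap(O_{\omega_0}\times\P^n)\simeq B_{\omega_0}\simeq\P^{\ii^+(\omega_0 p)-1}$. Consequently
$$\mathcal{F}^j_{\omega_0}=H^j(\P^n,\P^{\ii^+(\omega_0 p)-1};\Z_2),$$
which equals $\Z_2$ exactly when $\ii^+(\omega_0 p)\le j\le n$ and vanishes otherwise. In particular $\mathcal{F}^j$ is zero on the open subset $\Omega^{j+1}\subset\Omega$ and has constant stalk $\Z_2$ on its complement.

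To globalize and identify the sheaf cohomology with the pair cohomology, I would first regularize the system: by the regularization principle of the introduction, $(p,K)$ can be replaced by a nearby transversal system without altering the pair-homotopy type of $(\Omega\times\P^n,B)$. Under this regularity, the sets $\Omega^j\setminus\Omega^{j+1}$ become smooth locally closed strata over each of which $B$ restricts to a genuine projective-space bundle, and $\Omega^{j+1}$ becomes open with manifold boundary. A good open cover of $\Omega$ subordinate to this stratification then makes the stalk calculation globalize, via a \v{C}ech--Leray bicomplex, to an identification of $H^i(\Omega;\mathcal{F}^j)$ with $H^i(\Omega,\Omega^{j+1})$.

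The principal obstacle lies in the behavior of $\mathcal{F}^j$ across the boundary $\partial\Omega^{j+1}$: there $B$ is not a fiber bundle and the fibers collapse from $\P^j$ to $\P^{j-1}$, so naive stalk-wise identifications can fail. To handle this I would use the explicit local normal form of a quadratic form degenerating along the discriminant $\mathcal{Q}_0$, together with the description of the positive normal cone $N^+_q$ given in the introduction. This local analysis determines how $\mathcal{F}^j$ attaches across $\partial\Omega^{j+1}$ and shows that the attachment is precisely the one needed so that the spectral sequence reads off the pair cohomology $H^i(\Omega,\Omega^{j+1})$ rather than merely $H^i(\Omega\setminus\Omega^{j+1})$.
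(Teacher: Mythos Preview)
Your general framework is right---this is indeed the relative Leray spectral sequence of the projection $\Omega\times\P^n\to\Omega$---but the identification of $E_2$ contains a genuine gap, and the fix you propose does not address it.

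The problem is the open/closed support of the Leray sheaf. With your stalk computation, $\mathcal{F}^j$ has stalk $\Z_2$ on the \emph{closed} set $Z=\Omega\setminus\Omega^{j+1}$ and vanishes on the \emph{open} set $\Omega^{j+1}$. A sheaf with this support pattern is (up to the local-constancy you expect) the pushforward $i_*\Z_2$ from $Z$, and its cohomology is $H^i(Z)$, not $H^i(\Omega,\Omega^{j+1})$. These groups are genuinely different: already for $\Omega=S^k$ and $\Omega^{j+1}$ a small open ball one gets $H^0(Z)=\Z_2$ versus $H^0(\Omega,\Omega^{j+1})=0$. So the ``attachment across $\partial\Omega^{j+1}$'' cannot rescue the identification; for a sheaf supported on a closed set there is simply no mechanism producing relative cohomology with respect to the open complement. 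Your regularization---making $p$ transversal to $K$---is a statement about $X_p$ and does nothing to flip this support structure. (There is a second inaccuracy: at points $\omega_0$ where $\omega_0 p$ is degenerate, $B_V$ is homotopy equivalent to $\P^{n-\ii^-(\omega_0 p)}$, which absorbs the kernel directions, rather than to $B_{\omega_0}\simeq\P^{\ii^+(\omega_0 p)-1}$; the localization you quote from the introduction is only literally correct at nondegenerate points.)

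The paper resolves both issues with a single device: it replaces the open $B$ by the compact $C(\epsilon)=\{(\omega,x):(\omega p)(x)\ge\epsilon\}$ for small $\epsilon>0$, which is a deformation retract of $B$ by semialgebraicity. Now the projection is proper on $C(\epsilon)$, the fiber over $\omega$ is $\{(\omega p-\epsilon q_0)\ge 0\}\simeq\P^{\,n-\ii^-(\omega p-\epsilon q_0)}$, and the stalk of the Leray sheaf is $\Z_2$ precisely on the \emph{open} set $\{\ii^-(\epsilon)>n-j\}$. The sheaf is then the extension by zero from an open set, whose cohomology is the relative cohomology $H^i(\Omega,\Omega_{n-j}(\epsilon))$ with $\Omega_{n-j}(\epsilon)=\{\ii^-(\epsilon)\le n-j\}$ closed. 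Finally one checks $\Omega^{j+1}=\bigcup_{\epsilon>0}\Omega_{n-j}(\epsilon)$ and passes to the limit to obtain $H^i(\Omega,\Omega^{j+1})$. The $\epsilon$-shift thus simultaneously provides properness, the correct local model, and---crucially---the reversal of the open/closed roles that makes the pair cohomology appear.
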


\begin{proof}
Fix a scalar product $q_{0}$ (i.e. a positive definite form) and consider the function $\alpha:\Omega\times \P^{n}\to \R$ defined by $(\omega, x)\mapsto (\omega p)(x).$ Notice that until the scalar product $q_{0}$ has not been fixed, then only the \emph{sign} of $\alpha$ is well defined (because $p$ is homogeneous of degree \emph{two}); once $q_{0}$ has been fixed we can talk also about its \emph{value} by defining it to be that of the restriction to the $q_{0}$-unit sphere; we will discuss this better later.
The function $\alpha$ is continuos semialgebraic and $B=\{\alpha >0\}.$ By semialgebraicity, there exists $\epsilon >0$ such that the inclusion $$C(\epsilon)=\{\alpha \geq \epsilon\}\hookrightarrow B$$ is a homotopy equivalence.\\
Consider the projection $\beta_{l}(\epsilon):C(\epsilon)\to \Omega$ on the first factor; then by Leray there exists a cohomology spectral sequence $(F_{r}(\eps), d_{r}(\eps))$ converging to the cohomology group $H^{*}(\Omega \times \P^{n}, C(\eps);\Z_{2})\simeq H^{*}(\Omega \times \P^{n},B;\Z_{2})$ such that $$F_{2}^{i,j}(\eps)=\check{H}^{i}(\Omega, \mathcal{F}^{j}(\eps))$$ where $\mathcal{F}^{j}(\eps)$ si the sheaf generated by the presheaf $V\mapsto H^{j}(V\times \P^{n},\beta_{l}(\eps)^{-1}(V);\Z_{2}).$ Since $C(\epsilon)$ and $\Omega$ are locally compact and $\beta_{l}(\eps)$ is proper ($C(\eps)$ is compact), then the following isomorphism holds for the stalk of $\mathcal{F}^{j}(\eps)$ at each $\omega \in \Omega$ (see  \cite{Godement}, Remark 4.17.1, p. 202): $$(\mathcal{F}^{j}(\eps))_{\omega}\simeq H^{j}(\{\omega\}\times \P^{n},\beta_{l}(\eps)^{-1}(\omega);\Z_{2}).$$
The set $\beta_{l}(\eps)^{-1}(\omega)= \{x\in \P^{n}\, : \, (\omega p)(x)\geq \epsilon\}=\{x\in \P^{n}\, : \, (\omega p-\epsilon q_{0})(x)\geq 0\}$ has the homotopy type of a projective space of dimension $n-\ind^{-}(\omega p -\eps q_{0});$ thus, if we set $\ii^{-}(\eps)$ for the function $\omega \mapsto \ind^{-}(\omega p -\eps q_{0}),$ the following holds: $$(\mathcal{F}^{j}(\eps))_{\omega}=\left\{ \begin{array}{cc} \Z_{2}&\textrm{if $\ii^{-}(\eps)(\omega)>n-j$;}\\ 0& \textrm{otherwise}\end{array}\right.$$ Thus the sheaf $\mathcal{F}^{j}(\eps)$ is zero on the closed set $\Omega_{n-j}(\eps)=\{\ii^{-}(\eps)\leq n-j\}$ and is locally constant with stalk $\Z_{2}$ on its complement; hence $$F_{2}^{i,j}(\eps)=\check{H}^{i}(\Omega, \mathcal{F}^{j}(\eps))=\check{H}^{i}(\Omega, \Omega_{n-j}(\eps);\Z_{2}).$$
We claim now that $\Omega^{j+1}=\bigcup_{\eps>0}\Omega_{n-j}(\eps)$. Let $\omega \in \bigcup_{\eps>0}\Omega_{n-j}(\eps);$ then there exists $\overline{\eps}$ such that $\omega\in \Omega_{n-j}(\eps)$ for every $\eps<\overline{\eps}.$ Since for $\eps$ small enough $$\ii^{-}(\eps)(\omega)=\ii^{-}(\omega)+\dim (\ker \omega p)$$ then it follows that $$\ii^{+}(\omega)=n+1-\ii^{-}(\omega)-\dim (\ker \omega p)\geq j+1.$$ Viceversa if $\omega \in \Omega^{j+1}$ the previous inequality proves $\omega\in \Omega_{n-j}(\eps)$ for $\eps$ small enough, i.e. $\omega \in \bigcup_{\eps>0}\Omega_{n-j}(\eps).$\\
Moreover if $\omega\in \Omega_{n-j}(\eps)$ then, eventually choosing a smaller $\eps$, we may assume $\eps$ properly separates the spectrum of $\omega p$ and thus, by algebraicity of the map $\omega\mapsto \omega p$, there exists $U$ open neighborhood of $\omega$ such that $\eps$ properly separates also the spectrum of $\omega p'$ for every $\omega'\in U;$ hence $\omega'\in \Omega_{n-j}(\eps)$ for every $\omega'\in U.$ From this consideration it easily follows that each compact set in $\Omega^{j+1}$ is contained in some $\Omega_{n-j}(\eps)$ and thus $$\varinjlim_{\eps}\{H_{*}(\Omega, \Omega_{n-j}(\eps))\}=H_{*}(\Omega, \Omega^{j+1}).$$ With this in mind the following chain of isomorphisms $\varprojlim\{H^{i}(\Omega, \Omega_{n-j}(\eps);\Z_{2})\}\simeq(\varinjlim\{H_{i}(\Omega, \Omega_{n-j}(\eps);\Z_{2})\})^{*}=(H_{i}(\Omega, \Omega^{j+1};\Z_{2}))^{*}$ gives $$F_{2}^{i,j}=H^{i}(\Omega, \Omega^{j+1};\Z_{2}).$$

\end{proof}

\begin{remark}In the case $K \ne -K$, i.e. $\Omega \ne S^l,$ then $(E_{r},d_{r})$ converges to $H_{n-*}(X, \Z_{2}).$
This follows by comparing the two cohomology long exact sequences of the pairs $(\Omega\times \P^{n},B)$ and $(\P^{n},\P^{n}\backslash X)$ via the map $\beta_{r}.$ In this case $\beta_{r}:\Omega\times \P^{n}\to \P^{n}$ is a homotopy equivalence and the Five Lemma and Lemma \ref{lemmahomeq} together give $$H^{*}(\Omega\times \P^{n},B)\simeq H^{*}(\P^{n},\P^{n}\backslash X)\simeq H_{n-*}(X)$$
the last isomorphism being given by Alexander-Pontryagin Duality.
\end{remark}

\begin{thma}

There exists a cohomology spectral sequence of the first quadrant $(E_{r},d_{r})$ converging to $H_{n-*}(X;\Z_{2})$ such that $$E_{2}^{i,j}=H^{i}(C\Omega, \Omega^{j+1};\Z_{2}).$$
\end{thma}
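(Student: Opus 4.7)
The plan is to apply Theorem~\ref{basiccoo} to a modification $(\hat p,\hat K)$ of the pair $(p,K)$ chosen so that (i) $\hat\Omega$ is contractible, which, by the Remark following Theorem~\ref{basiccoo}, forces convergence to $H_{n-*}(X_p)$, and (ii) the pair $(\hat\Omega,\hat\Omega^{j+1})$ is homotopy equivalent to $(C\Omega,\Omega^{j+1})$. Concretely, fix a positive definite form $q_0\in\mathcal Q$ and set
$$
\hat p=(p^0,\ldots,p^k,-q_0)\colon\R^{n+1}\to\R^{k+2},\qquad \hat K=K\times\R_{\le 0}.
$$
Because $-q_0(x)<0$ for every $x\ne 0$, the new constraint is automatic, so $X_{\hat p}=X_p$. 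A direct computation gives $\hat K^{\circ}=K^{\circ}\times\R_{\ge 0}$, and therefore
$$
\hat\Omega=\bigl\{(\omega,t)\,:\,\omega\in K^{\circ},\ t\ge 0,\ |\omega|^2+t^2=1\bigr\},
$$
which is homeomorphic to $C\Omega$ via $\omega\mapsto(\omega,\sqrt{1-|\omega|^2})$.

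Since $\hat K\ne -\hat K$ and $\hat\Omega\cong C\Omega$ is contractible, the projection $\hat\Omega\times\P^n\to\P^n$ is a homotopy equivalence. Combining this with Lemma~\ref{lemmahomeq} and the Five Lemma exactly as in the Remark yields
$$
H^*(\hat\Omega\times\P^n,\hat B)\cong H^*(\P^n,\P^n\setminus X_p)\cong H_{n-*}(X_p),
$$
the last isomorphism by Alexander--Pontryagin duality. So the spectral sequence $(\hat F_r,d_r)$ produced by Theorem~\ref{basiccoo} applied to $(\hat p,\hat K)$ converges to $H_{n-*}(X_p)$.

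It remains to identify its $E_2$-page. For $(\omega,t)\in\hat\Omega$ one has $(\omega,t)\hat p=\omega p-tq_0$, hence
$$
\hat\Omega^{j+1}=\bigl\{(\omega,t)\in\hat\Omega\,:\,\ii^+(\omega p-tq_0)\ge j+1\bigr\},
$$
whose intersection with the base slice $\{t=0\}\cong\Omega$ is precisely $\Omega^{j+1}$. I claim $\hat\Omega^{j+1}$ deformation retracts onto this base inside $\hat\Omega$. Parametrize $(\omega,t)\in\hat\Omega^{j+1}$ as $(r\omega',\sqrt{1-r^2})$ with $r=|\omega|\in(0,1]$ and $\omega'\in\Omega$ (the vertex $r=0$ is excluded, because then $\omega p-tq_0=-q_0$ is negative definite), and define
$$
H_s(\omega,t)=\bigl(r_s\omega',\sqrt{1-r_s^2}\bigr),\qquad r_s=r+s(1-r),\ s\in[0,1].
$$
At $s=1$ one lands in $\Omega^{j+1}\subset\Omega\subset\hat\Omega$, and $H_s$ fixes the base. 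To see $H_s$ preserves $\hat\Omega^{j+1}$, rescale by $1/r_s>0$: the positive index equals $\ii^+\bigl(\omega'p-(\sqrt{1-r_s^2}/r_s)q_0\bigr)$, and as $s$ grows the coefficient of $q_0$ decreases, so by the monotonicity of $\ii^+$ with respect to the partial order on $\mathcal Q$ (if $q_1\ge q_2$ then every positive subspace for $q_2$ is positive for $q_1$, whence $\ii^+(q_1)\ge\ii^+(q_2)$), the index can only grow along the homotopy.

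Consequently $(\hat\Omega,\hat\Omega^{j+1})\simeq(C\Omega,\Omega^{j+1})$ as pairs, so defining $(E_r,d_r):=(\hat F_r,d_r)$ gives $E_2^{i,j}=H^i(\hat\Omega,\hat\Omega^{j+1})=H^i(C\Omega,\Omega^{j+1})$, as required. The main obstacle in this argument is the verification that the deformation retraction really stays inside $\hat\Omega^{j+1}$, which reduces to the elementary monotonicity of $\ii^+$; everything else is bookkeeping with dual cones and reduction to the Remark after Theorem~\ref{basiccoo}.
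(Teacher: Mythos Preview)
Your proof is correct and follows essentially the same route as the paper's: add an auxiliary negative-definite coordinate to the quadratic map and enlarge the cone accordingly so that the new $\hat\Omega$ is the cone $C\Omega$, invoke the Remark after Theorem~\ref{basiccoo} to obtain convergence to $H_{n-*}(X)$, and then retract $\hat\Omega^{j+1}$ onto $\Omega^{j+1}$ along meridians using the monotonicity of $\ii^+$ in the $q_0$-direction. The only differences from the paper are cosmetic (you append the extra coordinate rather than prepend it, and you spell out the meridian retraction and the monotonicity step more explicitly).
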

\begin{proof}
Keeping in mind the previous remark, we work the general case (i.e. also the case $K=\{0\}$). We replace $K$ with $\hat{K}=(-\infty, 0]\times K,$ the map $p$ with the map $\hat{p}:\R^{n+1}\to \R^{k+2}$ defined by $\hat{p}=(-q_{0}, p),$ where $q_{0}$ is a positive definite form (i.e. a scalar product)  and $\Omega$ with $$\hat{\Omega}=\hat{K}^{\circ}\cap S^{k+1}.$$ We also define $$\hat{\Omega}^{j+1}=\{(\eta,\omega)\in \hat{\Omega}\, : \, \ind^{+}(\omega p -\eta q_{0})\geq j+1\}.$$
Then, by construction, $$\hat{p}^{-1}(\hat{K})=p^{-1}(K)=X.$$
Applying Theorem \ref{basiccoo} to the pair $(\hat{p}, \hat{K}),$ with the previous remark in mind, we get a spectral sequence $(\hat{E}_{r},\hat{d}_{r})$ converging to $H_{n-*}(X;\Z_{2})$ with $$\hat{E}_{2}^{i,j}=H^{i}(\hat{\Omega}, \hat{\Omega}^{j+1};\Z_{2}).$$
We identify $\Omega^{j+1}$ with $\hat{\Omega}^{j+1}\cap\{\eta=0\}$ and we claim that the inclusion of pairs $(\hat{\Omega}, \Omega^{j+1})\hookrightarrow (\hat{\Omega}, \hat{\Omega}^{j+1})$ induces an isomorphism in cohomology.  This follows from the fact that $\hat{\Omega}^{j+1}$ deformation retracts onto $\Omega^{j+1}$ along the meridians (the deformation retraction is defined since $j\geq 0$ and $\ii^{+}(1,0,\ldots,0)=0,$ thus the ``north pole'' of $S^{k+1}$ does not belong to any of the $\hat{\Omega}^{j+1}$). If $\eta_{1}\leq \eta_{2}$ then $\ind^{+}(\omega p -\eta_{1}q_{0})\geq \ind^{+}(\omega p -\eta_{2}q_{0}):$ thus if $(\eta,\omega)\in \hat{\Omega}^{j+1}$ then all the points on the meridian arc connecting $(\eta, \omega)$ with $\Omega=\hat{\Omega}\cap \{\eta=0\}$ belong to $\hat{\Omega}^{j+1}.$\\
Noticing that $(\hat{\Omega}, \Omega^{j+1})\approx (C \Omega, \Omega^{j+1}),$ where $C \Omega$ stands for the topological space cone of $\Omega,$ concludes the proof.
\end{proof}

If we define $$\mu\doteq\max_{\eta \in\Omega}\ii^{+}(\eta),$$ then by looking directly at the table in the Introduction we can derive the following corollary of Theorem A.

\begin{coro}\label{corempty}If $0\leq b\leq n-\mu-k$ then $$H_{b}(X)=\Z_{2}.$$
In particular if $n\geq \mu +k$ then $X$ is nonempty.
\end{coro}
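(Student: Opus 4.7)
The plan is to read the corollary off the $E_2$ page of the spectral sequence from Theorem A, diagonal by diagonal, and verify that for each $m = n-b$ with $m \ge \mu+k$ the abutment $H^m \cong H_{n-m}(X)$ receives a single copy of $\Z_2$. I first unpack the table in the Introduction: in rows $j \ge \mu$ we have $\Omega^{j+1} = \emptyset$, so $E_2^{i,j} = H^i(C\Omega)$, which equals $\Z_2$ for $i=0$ and vanishes otherwise by contractibility of the cone $C\Omega$; in rows $j < \mu$, the long exact sequence of the pair $(C\Omega, \Omega^{j+1})$ together with $\dim C\Omega \le k+1$ forces $E_2^{i,j} = 0$ outside the range $1 \le i \le k+1$.

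Next I locate the possibly nonzero slots on the diagonal $i+j = m$ with $m \ge \mu+k$. The piece with $j \ge \mu$ contributes only at $(0,m)$, with value $\Z_2$. The piece with $j < \mu$ requires $i = m-j > m-\mu \ge k$, hence $i = k+1$, which then forces $j = m-k-1 < \mu$, i.e.\ $m \le \mu+k$; so the only extra candidate is $(k+1, \mu-1)$, appearing only when $m = \mu+k$ and carrying the value $H^k(\Omega^\mu)$ by the same long exact sequence.

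I then argue $E_\infty^{0,m} = \Z_2$. There are no incoming differentials since the source $(-r, m+r-1)$ sits outside the first quadrant, and any outgoing $d_r \colon E_r^{0,m} \to E_r^{r, m-r+1}$ with $r \ge 2$ lands in a slot that is already zero at $E_2$: if $m-r+1 \ge \mu$ the target lies in a row whose only nonzero column is $i=0$, while if $m-r+1 < \mu$ then $r > m-\mu+1 \ge k+1$ puts the target column outside $[0,k+1]$. For $m > \mu+k$ this is the unique nonzero contribution on the diagonal, so $H_{n-m}(X) = \Z_2$. Setting $b=0$ and $m=n \ge \mu+k$ immediately gives the ``in particular'' statement: $H_0(X) \supseteq E_\infty^{0,n} = \Z_2 \ne 0$, so $X$ is nonempty.

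The main obstacle is the boundary case $m = \mu+k$: the entry $E_2^{k+1,\mu-1} = H^k(\Omega^\mu)$ need not vanish a priori, so to obtain the equality $H_{n-\mu-k}(X) = \Z_2$ one must kill its possible contribution to $E_\infty$. The only incoming differential which can reach it is $d_{k+1} \colon E_{k+1}^{0,\mu+k-1} \to E_{k+1}^{k+1,\mu-1}$, whose source is $\Z_2$ at $E_2$ since $\Omega^{\mu+k} = \emptyset$; checking that this differential is surjective, and exploiting the structure of $\Omega^\mu$ as an open subset of $\Omega \subset S^k$, is what makes the endpoint $b = n-\mu-k$ genuinely subtler than the interior range $b < n-\mu-k$.
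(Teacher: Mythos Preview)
Your approach coincides with the paper's---read the claim off the $E_2$ page---but you are considerably more careful. The paper simply writes $\Z_2 = E_\infty^{0,n-b} = H_b(X)$ and never verifies that the remaining entries on the diagonal $i+j=n-b$ vanish at $E_\infty$. You carry out that verification, correctly isolate at the endpoint $m=n-b=\mu+k$ a possible extra term $E_2^{k+1,\mu-1}\cong H^k(\Omega^\mu)$, and honestly leave it unresolved.

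Your caution is warranted: that extra term need not die, and the corollary as stated actually fails at $b=n-\mu-k$. Take $p=2\cdot h_2$ from the examples in Section~7 (the doubled complex Hopf map $\R^8\to\R^3$), so that $n=7$, $k=2$, $\mu=4$, and $n-\mu-k=1$. Here $\ii^+\equiv\mu$, hence $\Omega^\mu=\Omega=S^2$ and $E_2^{3,3}\cong H^2(S^2)=\Z_2$. The only differential that can reach this slot is $d_3\colon E_3^{0,5}\to E_3^{3,3}$, and by the constant-index theorem of Section~7 it is cup product with $w_2(2\,\bar h_2^*\L_2)=2\bmod 2=0$. Thus $E_\infty^{3,3}=\Z_2$ survives and $H_1(X)=(\Z_2)^2$; one checks directly that $X\cong U(2)/\{\pm I\}\cong S^1\times\RP^3$. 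Your argument is therefore complete and correct on the strict range $0\le b<n-\mu-k$, and your nonemptiness argument for $n\ge\mu+k$ is fine as written, since it only uses that $E_\infty^{0,n}=\Z_2$ is a nonzero subquotient of $H_0(X)$.
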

\begin{proof}Simply observe that the group $E_{2}^{0,n-b}$ equals $\Z_{2}$ for $0\leq b\leq n-\mu-k$ and that all the differentials $d_{r}:E_{r}^{0,n-b}\to E_{r}^{r,n-b+r-1}$ for $r\geq0$ are zero, since they take values in zero elements. Hence $$\Z_{2}=E_{\infty}^{0,n-b}=H_{b}(X).$$
\end{proof}

\section{Preliminaries for the second differential}
\subsection{Nondegeneracy properties}

Let $\mathcal{Q}_{0}\subset \mathcal{Q}$ be the set of singular quadratic forms on $\R^{n+1}$:
$$\mathcal{Q}_{0}=\{q\in \mathcal{Q}\, : \, \ker (q)\neq0\}.$$
Consider the set $K=\{(x,q)\in \R^{n+1}\times\mathcal{Q}\, | \, x\in \ker{q}\}$ and the map $p:K\to \mathcal{Q}$ which is the restriction of the projection on the second factor. Let $$\mathcal{Q}_{0}=\coprod Z_{j}$$ be a Nash stratification (i.e. smooth and semialgebraic) such that $p$ trivializes over each $Z_{j}.$\\
For a quadratic form $q\in \Q$ we may abuse a little of notations and write $q(\cdot,\cdot)$ for the bilinear form obtained by polarizing $q;$ no confusion will arise by distinguish the two from the number of their arguments.\\
We notice the following:
\begin{fact}
Let $r$ be a singular form and suppose $r\in Z_{j}$ for some stratum of $\mathcal{Q}_{0}$ as above. Then for every $q\in T_{r}Z_{j}$ and $x_{0}\in ker(r)$ we have $q(x_{0},x_{0})=0.$
\end{fact}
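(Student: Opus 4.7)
The plan is to differentiate the identity
$$\gamma(t)\bigl(x(t),x(t)\bigr)\equiv 0$$
along a suitable curve $\gamma$ in $Z_{j}$ realising the tangent vector $q$, where $x(t)$ is a smoothly varying family of kernel vectors extending $x_{0}$. The hypothesis that the projection $p:K\to \mathcal Q_{0}$ trivializes over $Z_{j}$ is exactly what makes such a smooth extension available: locally over the stratum, $p$ looks like a product with fibre $\ker r$, so every vector in $\ker r$ can be spread into a smoothly varying section of kernels.

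First I would pick a smooth Nash curve $\gamma:(-\varepsilon,\varepsilon)\to Z_{j}$ with $\gamma(0)=r$ and $\dot\gamma(0)=q$; then, through the trivializing chart, lift $x_{0}$ to a smooth path $t\mapsto x(t)\in\R^{n+1}$ satisfying $x(0)=x_{0}$ and $x(t)\in\ker\gamma(t)$ for every $t$. Polarising, the kernel condition reads $\gamma(t)(x(t),y)=0$ for all $y\in\R^{n+1}$; in particular the scalar function $t\mapsto\gamma(t)(x(t),x(t))$ is identically zero.

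Differentiating this identity at $t=0$ via the Leibniz rule gives
$$\dot\gamma(0)(x_{0},x_{0})+2\,r\bigl(x_{0},\dot x(0)\bigr)=0,$$
and the second summand vanishes because $x_{0}\in\ker r$. Hence $q(x_{0},x_{0})=\dot\gamma(0)(x_{0},x_{0})=0$, as required.

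There is no serious obstacle here: the only genuine input is the construction of the smooth lift $x(t)$, and this is the whole point of choosing the stratification so that $p$ trivializes over each $Z_{j}$. Without this, one would know only that $\dim\ker$ is upper semicontinuous along $Z_{j}$ and could not differentiate the pointwise kernel condition; with it, the computation reduces to a one-line Leibniz. Geometrically, the fact extends to the singular strata of $\mathcal Q_{0}$ the observation already recorded in the introduction, namely that for $q\in \mathcal Q_{0}\setminus\mathrm{sing}\,\mathcal Q_{0}$ and $x\in\ker q\setminus 0$ the vector $x\odot x$ is normal to $\mathcal Q_{0}$ at $q$.
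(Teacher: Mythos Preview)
Your proof is correct and follows essentially the same argument as the paper: pick a curve in $Z_j$ realising the tangent vector, use the trivialization of $p$ over $Z_j$ to extend $x_0$ to a smooth family of kernel vectors, then differentiate the identity $\gamma(t)(x(t),x(t))\equiv 0$ at $t=0$ and observe that the cross term $2r(x_0,\dot x(0))$ vanishes because $x_0\in\ker r$. The paper's proof is just a terser version of the same computation.
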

\begin{proof}
Let $r:I\to Z_{j}$ be a smooth curve such that $r(0)=r$ and $\dot{r}(0)=q.$ By the triviality of $p$ over $Z_{j}$ it follows that there exists $x:I\to \R^{n+1}$ such that $x(0)=x_{0}$ and $x(t)\in \ker(r(t))$ for every $t\in I. $ This implies $r(t)(x(t),x(t))\equiv 0$ and deriving we get
$$0=\dot{r}(0)(x(0),x(0))+2r(0)(x(0),\dot{x}(0))=q(x_{0},x_{0}).$$
\end{proof}

\begin{defi}
Let $f:\Omega\to \mathcal{Q}$ be a smooth map. We say that $f$ is degenerate at $\omega_{0}\in\Omega$ if there exists $x\in \ker(f(\omega_{0}))\backslash\{0\}$ such that for every $v\in T_{\omega_{0}}\Omega$ we have $(df_{\omega_{0}}v)(x,x)\leq 0;$ in the contrary case we say that $f$ is nondegenerate at $\omega_{0}.$ We say that $f$ is nondegenerate if it is nondegenerate at each point $\omega \in \Omega.$
\end{defi}

\begin{lemma}
Let $\Omega=\coprod V_{i}$ be a finite partiton with each $V_{i}$ Nash and $f:\Omega\to\mathcal{Q}$ be a semialgebraic map and $\mathcal{Q}_{0}=\coprod Z_{j}$ as above. Suppose that for every $V_{i}$ the map $f_{|V_{i}}$ is transversal to all strata of $\mathcal{Q}_{0}.$ Then $f$ is nondegenerate.
\end{lemma}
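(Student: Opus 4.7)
The plan is to prove the contrapositive via a linear functional argument: at any $\omega_{0}\in\Omega$ where $f$ fails to be nondegenerate, I will exhibit a nonzero linear functional on $\mathcal{Q}$ that annihilates both $T_{r}Z_{j}$ and $df_{\omega_{0}}(T_{\omega_{0}}V_{i})$, contradicting the transversality of $f_{|V_{i}}$ to $Z_{j}$.

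First I would dispose of the trivial case $f(\omega_{0})\notin\mathcal{Q}_{0}$: here $\ker f(\omega_{0})=0$, so nondegeneracy at $\omega_{0}$ holds vacuously. Otherwise set $r:=f(\omega_{0})\in Z_{j}$ for some stratum $Z_{j}$ of $\mathcal{Q}_{0}$, let $V_{i}$ be the stratum of the partition of $\Omega$ containing $\omega_{0}$, and for each nonzero $x\in\ker r$ introduce the linear functional $L_{x}\colon\mathcal{Q}\to\R$ defined by $L_{x}(q)=q(x,x)$. Clearly $L_{x}\not\equiv 0$ (evaluate on any scalar product), and by the preceding Fact $L_{x}$ annihilates $T_{r}Z_{j}$.

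Suppose now $f$ is degenerate at $\omega_{0}$, witnessed by some $x\in\ker r\setminus\{0\}$; then $L_{x}(df_{\omega_{0}}v)\leq 0$ for every $v\in T_{\omega_{0}}\Omega$, and in particular for every $v\in T_{\omega_{0}}V_{i}$. Because $T_{\omega_{0}}V_{i}$ is a vector space closed under negation, the inequality forces $L_{x}\circ df_{\omega_{0}}\equiv 0$ on $T_{\omega_{0}}V_{i}$. Combining this with $L_{x}|_{T_{r}Z_{j}}=0$ and the transversality hypothesis
$$T_{r}\mathcal{Q}=T_{r}Z_{j}+df_{\omega_{0}}(T_{\omega_{0}}V_{i}),$$
we conclude $L_{x}\equiv 0$ on $T_{r}\mathcal{Q}=\mathcal{Q}$, contradicting $L_{x}\neq 0$. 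The only delicate point I anticipate is interpreting $T_{\omega_{0}}\Omega$ correctly when $\omega_{0}$ sits on a positive-codimension stratum (recall that $\Omega=K^{\circ}\cap S^{k}$ inherits a polyhedral boundary); as long as $T_{\omega_{0}}V_{i}$ is admissible as a subspace of test directions inside $T_{\omega_{0}}\Omega$ --- the natural convention since $V_{i}$ is a smooth Nash piece of $\Omega$ through $\omega_{0}$ --- no further work is required.
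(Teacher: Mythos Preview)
Your argument is correct and is essentially the paper's own proof, recast in contrapositive/dual-functional language: the paper picks a positive definite $q^{+}$, decomposes it via transversality as $df_{\omega_{0}}v+\dot r$ and evaluates at $x$, while you package the same evaluation as the linear functional $L_{x}$ and observe it must vanish on both summands. The ``closed under negation'' step you use to pass from $\leq 0$ to $=0$ is the only extra wrinkle, and it is immediate; your caveat about $T_{\omega_{0}}V_{i}\subset T_{\omega_{0}}\Omega$ is exactly the convention the paper adopts.
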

\begin{proof}
Let $\omega_{0}\in \Omega$ and $x\in \ker(f(\omega_{0}))\backslash\{0\};$ we must prove that there exists $v\in T_{\omega_{0}}\Omega$ such that $(df_{\omega_{0}}v)(x,x)>0.$ Let $V_{i}$ such that $\omega_{0}\in V_{i}.$ Then $T_{\omega_{0}}V_{i}\subset T_{\omega_{0}}\Omega;$ suppose $f(\omega_{0})\in Z_{j}.$ Since $f_{|V_{i}}$ is transversal to $Z_{j},$ then
$$\textrm{im} (df_{|V_{i}})_{\omega_{0}}+T_{f(\omega_{0})}Z_{j}=\mathcal{Q}.$$
Thus let $q^{+}\in \mathcal{Q}$ be a positive definite form, $v\in T_{\omega_{0}}V_{i}$ and $\dot{r}\in T_{f(\omega_{0})}Z_{j}$ such that
$$df_{\omega_{0}}v+\dot{r}=q^{+}.$$
Since $x\in \ker (f(\omega_{0}))\backslash\{0\},$ then the previous Fact implies $\dot{r}(x,x)=0,$ and plugging in the previous equation we get
$$(df_{\omega_{0}}v)(x,x)=(df_{\omega_{0}}v)(x,x)+\dot{r}(x,x)=q^{+}(x,x)>0.$$

\end{proof}

\begin{lemma}\label{lemmatrasv}
Let $f:\Omega \to \mathcal{Q}$ be a semialgebraic smooth map. Then there exists a definite positive form $q_{0}\in \mathcal{Q}$ such that for every $\eps>0$ sufficiently small the map $f_{\eps}:\Omega \to \mathcal{Q}$ defined by $$\omega \mapsto f(\omega)-\eps q_{0}$$ is nondegenerate.
\end{lemma}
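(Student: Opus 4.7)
The plan is to reduce nondegeneracy to transversality via the previous lemma, and then to obtain transversality for a whole interval of perturbation parameters $\eps$ by combining Thom's parametric transversality with the semi-algebraic structure of $f$.

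First fix a Nash partition $\Omega=\coprod V_i$ on which $f$ is smooth (available by generic smoothness for semi-algebraic maps), together with the given Nash stratification $\mathcal{Q}_0=\coprod Z_j$. By the previous lemma it is enough to exhibit a positive definite $q_0\in\mathcal Q$ and an $\eps_0>0$ such that, for every $\eps\in(0,\eps_0)$ and every pair $(i,j)$, the restriction $f_\eps|_{V_i}$ is transversal to $Z_j$. Consider the total perturbation
$$
G:\Omega\times\mathcal{Q}_+\longrightarrow\mathcal Q,\qquad (\omega,q)\longmapsto f(\omega)-q,
$$
where $\mathcal{Q}_+\subset\mathcal Q$ denotes the open cone of positive definite forms. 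Since $\partial G/\partial q=-\mathrm{id}$, the restriction of $G$ to each $V_i\times\mathcal{Q}_+$ is a submersion, hence transversal to every stratum $Z_j$. Thom's parametric transversality theorem then yields that the bad set
$$
B\doteq\{q\in\mathcal{Q}_+ : f|_{V_i}-q\text{ is not transverse to }Z_j\text{ for some }(i,j)\}
$$
has Lebesgue measure zero in $\mathcal{Q}_+$. Moreover $B$ is semi-algebraic, because non-transversality is a rank-drop condition on the derivative of $f|_{V_i}-q$, which is semi-algebraic in $(\omega,q)$, and projection onto the $q$-factor preserves semi-algebraicity.

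To pass from ``measure zero'' to ``every $\eps$ sufficiently small'' I exploit the cone structure of $\mathcal{Q}_+$. Write $\mathcal{Q}_+\cong \Sigma\times(0,\infty)$ via $q=\eps q_0$, with $\Sigma$ a smooth cross-section of the cone. Under this diffeomorphism $B$ pulls back to a semi-algebraic set of measure zero, so by Fubini, for almost every $q_0\in\Sigma$ the slice $T_{q_0}\doteq\{\eps>0 : \eps q_0\in B\}$ has measure zero in $(0,\infty)$. But $T_{q_0}$ is itself semi-algebraic, and every semi-algebraic subset of $\R$ is a finite union of points and intervals; measure zero then forces $T_{q_0}$ to be finite, hence bounded away from $0$. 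Picking such a $q_0\in\mathcal{Q}_+$, there exists $\eps_0>0$ with $(0,\eps_0)\cap T_{q_0}=\emptyset$, meaning $f_\eps$ is transverse to every $Z_j$ on every $V_i$ for all $\eps\in(0,\eps_0)$, and the previous lemma concludes that $f_\eps$ is nondegenerate on this interval.

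The main obstacle is this final ``measure zero $\Rightarrow$ finite on a generic ray'' step: without it the parametric transversality would only provide nondegeneracy for \emph{almost every} small $\eps$, whereas the statement requires \emph{every} $\eps$ in a right neighbourhood of $0$. It is precisely the semi-algebraicity hypothesis on $f$ that powers this upgrade, by forcing a Lebesgue null semi-algebraic subset of $\R$ to consist of finitely many points.
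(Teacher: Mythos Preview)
Your proof is correct and follows essentially the same route as the paper's: both consider the family $(\omega,q)\mapsto f(\omega)-q$ over $\mathcal{Q}^+$, observe it is a submersion, conclude that the bad parameter set is a proper semialgebraic subset of $\mathcal{Q}^+$, and then find a ray $\{tq_0\}_{t>0}$ meeting it only finitely often. The only cosmetic difference is that the paper invokes the semialgebraic Sard lemma directly (getting $\dim\Sigma<\dim\mathcal{Q}^+$ and then a dimension argument for the ray), whereas you go through Thom parametric transversality plus Fubini to reach the same finiteness on a generic ray; both are valid and the structural argument is identical.
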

\begin{proof}
Let $\Omega=\coprod V_{i}$ and $\mathcal{Q}_{0}=\coprod Z_{j}$ be as above. For every $V_{i}$ consider the map $F_{i}:V_{i}\times \mathcal{Q}^{+}\to \mathcal{Q}$ defined by
$$(\omega,q_{0})\mapsto f(\omega)-q_{0}.$$
Since $\mathcal{Q}^{+}$ is open in $\mathcal{Q},$ then $F_{i}$ is a submersion and $F_{i}^{-1}(\mathcal{Q}_{0})$ is Nash-stratified by $\coprod F_{i}^{-1}(Z_{j}).$ Then ${(F_{q_{0}})}_{|V_{i}}:\omega \mapsto f(\omega)-q_{0}$ is transversal to all strata of $\mathcal{Q}_{0}$ if and only if $q_{0}$ is a regular value for the restriction of the second factor projection $\pi_{i}:V_{i}\times \mathcal{Q}^{+}\to \mathcal{Q}^{+}$ to each stratum of $F_{i}^{-1}(\mathcal{Q}_{0})=\coprod F_{i}^{-1}(Z_{j}).$
Thus let $\pi_{ij}=(\pi_{i})_{|F_{i}^{-1}(Z_{j})}:F_{i}^{-1}(Z_{j})\to \mathcal{Q}^{+};$ since all datas are smooth semialgebraic, then by semialgebraic Sard's Lemma, the set $\Sigma_{ij}=\{\hat{q}\in \mathcal{Q}^{+}\, : \, \hat{q}\textrm{ is a critical value of $\pi_{ij}$}\}$ is a semialgebraic subset of $\mathcal{Q}^{+}$ of dimension $\dim (\Sigma_{ij})<\dim (\mathcal{Q}^{+}).$ Hence $\Sigma=\cup_{i,j}\Sigma_{ij}$ also is a semialgebraic subset of $\mathcal{Q}^{+}$ of dimension $\dim (\Sigma)<\dim (\mathcal{Q}^{+})$ and for every $q_{0}\in \mathcal{Q}^{+}\backslash \Sigma$ and for every $i,j$ the restriction of $\omega\mapsto f(\omega)-q_{0}$ to $V_{i}$ is transversal to $Z_{j}.$ Thus by the previous Lemma $f-q_{0}$ is nondegenerate. Since $\Sigma$ is semialgebraic of codimension at least one, then there exists $q_{0}\in \mathcal{Q}^{+}\backslash \Sigma$ such that $\{t q_{0}\}_{t>0}$ intersects $\Sigma$ in a finite number of points, i.e. for every $\eps>0$ sufficiently small $\eps q_{0}\in \mathcal{Q}^{+}\backslash \Sigma.$ The conclusion follows.
\end{proof}

Let $f:\Omega \to \mathcal{Q}$ be a smooth map. We define, for every $V\subset \Omega$ the set $$B_{f}(V)=\{(\omega, x)\in V\times \P^{n}\, : \, f(\omega)(x)>0\}.$$
Notice that for the previous definition the \emph{value} of $f(\omega)$ at $x\in \P^{n},$ which is still undefined, is irrelevant: what we need, i.e. its sign, is well defined since $f(\omega)$ is a quadratic form, hence homogeneous of degree \emph{two}.\\
Suppose now that a scalar product in $\R^{n+1}$ has been fixed. Then we can identify each $q\in \Q$ with a symmetric $(n+1)\times (n+1)$ matrix $Q$ by the rule:
$$q(x)=\langle x,Qx\rangle .$$
Now also the \emph{value} of $q$ at $x\in \P^{n}$ is defined: let $S^{n}$ be the unit sphere (w.r.t. the fixed scalar product) in $\R^{n+1}$ and $p:S^{n}\to \P^{n}$ be the covering map; then, with a little abuse of notations, we set for $x=p(v)\in \P^{n}$ (for some $v\in S^{n}):$ $$q(x)\doteq q(v).$$
Since $q$ is homogeneous of even degree, the previous function is well defined, i.e. does not depend on the choice of $v.$\\
The $\emph{eigenvalues}$ of $q$ with respect to $g$ are defined to be those of $Q$:
$$\lambda_{1}(q)\geq\cdots\geq \lambda_{n+1}(q).$$
In the space $\Q$ we define
$$ \D_{j}\doteq \{q\in \Q\, : \, \lambda_{j}(q)\neq \lambda_{j+1}(q)\}.$$
Notice that  $\Q^{j}\backslash \Q^{j+1}\subset \D_{j}$ for every possible choice of the scalar product in $\R^{n+1}.$ On the space $\D_{j}$ is naturally defined the vector bundle:
$$\begin{tikzpicture}[xscale=1.7, yscale=1.5]

     \node (A4_0) at (4, 0) {$\D_{j}$};
    \node (A3_1) at (3, 1) {$\R^{j}$};
    \node (A4_1) at (4, 1) {$\L_{j}^{+}$};
\path (A3_1) edge [->] node [auto] {}(A4_1);
    \path (A4_1) edge [->] node [auto] {}(A4_0);

      \end{tikzpicture}
$$
whose fiber at the point $q\in \D_{j}$ is $(\L_{j}^{+})_q=\textrm{span}\{x\in \R^{n+1}\, : \, Qx=\lambda_{i}x, 1\leq i\leq j\}$ and whose vector bundle structure is given by its inclusion in $\D_{j}\times \R^{n+1}.$\\
Similarly the vector bundle $\R^{n-j+1}\hookrightarrow \mathcal{L}_{j}^{-}\to \D_{j}$ has fiber at the point $q\in \D_{j}$ the vector space $(\L_{j}^-)_q=\textrm{span}\{x\in \R^{n+1}\, : \, Qx=\lambda_{i}x, j+1\leq i\leq n+1\}$ and vector bundle structure given by its inclusion in $\D_{j}\times \R^{n+1}.$
Notice that $$\mathcal{L}_{j}^+\oplus \mathcal{L}_{j}^-=\D_{j}\times \R^{n+1}$$ and thus Whitney product formula holds for their total Stiefel-Whitney classes:
$w(\mathcal{L}_{j}^{+})\smile w(\mathcal{L}_{j}^-)=1.$ In particular:
$$w_{1}(\mathcal{L}_{j}^+)=w_{1}(\mathcal{L}_{j}^-).$$
In the sequel we will need for $q\in \D_{j}$ the projective spaces:
$$P_{j}^{+}(q)\doteq\mathbb{P}(\L_{j}^+)_q \quad \textrm{and} \quad P_{j}^{-}(q)\doteq \P(\L_{j}^{-})_{q}.$$
For a given $q\in \Q$ with $\ii^{-}(q)=i$ (which implies $q\in D_{n+1-i}$) we will use the simplified notation $$P^{+}(q)\doteq P_{n+1-i}^{+}(q)\quad \textrm{and}\quad P^{-}(q)\doteq P_{n+1-i}^{-}(q).$$
(even if $q\in \D_{n+1-i}$ for every metric still there is dependence on the metric for these spaces, but we omit it for brevity of notations; the reader should pay attention).
Notice that $q|_{P^{-}(q)}<0$ whereas $q|_{P^{+}(q)}\geq0,$ i.e. $P^{+}(q)$ contains also $\mathbb{P}(\ker q).$
The following picture may help the reader:
$$\underbrace{\lambda_{1}(q)\geq\cdots\geq\lambda_{n+1-\ii^{-}(q)}(q)}_{P^{+}(q)}\geq0>\underbrace{\lambda_{n+2-\ii^{-}(q)}(q)\geq \cdots \geq \lambda_{n+1}(q)}_{P^{-}(q)}$$

\begin{lemma}\label{lemmadef} Let $f:\Omega\to \mathcal{Q}$ be a smooth \emph{nondegenerate} map. Then there exists $\delta_{1}:\Omega \to (0,+\infty)$ such that for every $\omega\in \Omega,$ for every $V_{1}\subset V_{2}$ closed convex neighborhoods of $\omega$ with $\textrm{diam}(V_{2})<\delta_{1}(\omega)$ and for every $\eta \in V_{1}$ such that $\ii^{-}(f(\eta))=\ii^{-}(f(\omega))$ and $\det (f(\eta))\neq 0$ the inclusions $$(\eta, P^{+}(f(\eta)))\hookrightarrow B_{f}(V_{1})\hookrightarrow B_{f}(V_{2})$$ are homotopy equivalences.\\
Moreover in the case $f$ is semialgebraic, then the function $\delta_{1}$ can be chosen to be semialgebraic (but in general not continuous).
\end{lemma}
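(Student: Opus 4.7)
The plan is to exhibit a deformation retraction of $B_{f}(V_{2})$ onto the single slice $\{\eta\}\times P^{+}(f(\eta))$ that restricts on $B_{f}(V_{1})$ to another deformation retraction onto the same slice; both stated inclusions are then homotopy equivalences by the usual inclusion-of-deformation-retract principle. The retraction decomposes as a \emph{base contraction} sliding $V_{2}$ onto $\{\eta\}$ within $\Omega$ followed by a \emph{fiber contraction} collapsing the fiber over $\eta$ onto $P^{+}(f(\eta))$; the nondegeneracy of $f$ is precisely what allows the base contraction to be performed without leaving $B_{f}$.

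The fiber contraction is the classical argument. Since $\det f(\eta)\neq 0$, the orthogonal spectral splitting $\R^{n+1}=\L^{+}\oplus\L^{-}$ into the positive and negative eigenspaces of $f(\eta)$ gives, for each lift $\tilde{x}=\tilde{x}^{+}+\tilde{x}^{-}\in S^{n}$ of a point in $\{x\in\P^{n}:f(\eta)(x)>0\}$, the path $\tilde{x}^{+}+(1-t)\tilde{x}^{-}$, which stays in $\{f(\eta)>0\}$ because $f(\eta)(\tilde{x}^{+})>-f(\eta)(\tilde{x}^{-})\geq 0$. After projectivizing and normalizing, this deformation retracts the fiber onto $P^{+}(f(\eta))$; the hypothesis $\ii^{-}(f(\eta))=\ii^{-}(f(\omega))$ fixes the dimension $\dim P^{+}(f(\eta))=n-\ii^{-}(f(\omega))$ uniformly in the allowed $\eta$.

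The base contraction exploits the convexity of $V_{2}$. The naive homotopy $\eta(t)=(1-t)\eta'+t\eta$ may fail because $f(\eta(t))(x)$ can drop to zero when $\eta(t)$ crosses the discriminant $\mathcal{Q}_{0}$ with $x\in\ker f(\eta(t))$; this is where nondegeneracy of $f$ enters. By hypothesis, for every unit $x\in\ker f(\omega)$ there is a vector $v\in T_{\omega}\Omega$ with $(df_{\omega}v)(x,x)>0$; compactness of the sphere in $\ker f(\omega)$ together with a smooth partition of unity produces a continuous field $x\mapsto v(x)$ on a neighborhood $U$ of $\P(\ker f(\omega))\subset\P^{n}$ still satisfying the same strict inequality. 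I would then choose $\delta_{1}(\omega)$ small enough that (i) every $x$ for which the straight-line trajectory drives $f$ to zero lies inside $U$, (ii) on $U$ the first-order term $(df_{\omega}v(x))(x,x)$ dominates the quadratic correction for small excursions in direction $v(x)$, and (iii) $V_{2}$ is disjoint from the deeper singular strata of $\mathcal{Q}_{0}$. The perturbed base homotopy $\eta(t)=(1-t)\eta'+t\eta+s(t,x)v(x)$, with a bump $s(t,x)$ activated only when the unperturbed path threatens to leave $B_{f}$, stays inside $V_{2}$ and inside $B_{f}$; concatenation with the fiber contraction completes the retraction.

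The principal obstacle is the joint continuity of the two pieces across the degenerate locus: the spectral splitting $\L^{+}\oplus\L^{-}$ is discontinuous as $f(\eta')$ approaches a degenerate form, so no single formula works on all of $V_{2}$. The resolution, and the reason for carrying out the base contraction first, is that once the trajectory terminates at the nondegenerate $\eta$ the spectral decomposition is smooth and the fiber contraction is unambiguous. For the semialgebraic refinement, the finite cover of the sphere in $\ker f(\omega)$, the bump amplitude $s$, and the threshold $\delta_{1}$ are all extractable from semialgebraic data by the semialgebraic Sard lemma and triangulation, so $\delta_{1}$ can be chosen semialgebraic; continuity in $\omega$ should not be expected because $\dim\ker f(\omega)$ only jumps upward semicontinuously across strata of $\Omega$.
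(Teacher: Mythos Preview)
The paper itself does not give a self-contained argument here; it simply invokes Lemma~8 (and, for the semialgebraic addendum, the proof of Lemma~7) of \cite{Agrachev2}. Your attempt to write out an explicit two-stage retraction therefore goes beyond what the paper provides, and the overall shape---contract the base to $\{\eta\}$, then contract the fibre to $P^{+}(f(\eta))$---is the right one. The fibre contraction is standard and correct.

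There is, however, a genuine gap in the base contraction. Your homotopy $\eta(t)=(1-t)\eta'+t\eta+s(t,x)\,v(x)$ moves only the $\Omega$-coordinate and leaves $x$ fixed. For the concatenation with the fibre stage to make sense you must land, at $t=1$, at a point $(\eta,x)$ with $f(\eta)(x)>0$. But this is not guaranteed: the projection $\beta_{r}(B_{f}(V_{2}))=\bigcup_{\eta'\in V_{2}}\{f(\eta')>0\}$ is strictly larger than the single fibre $\{f(\eta)>0\}$ whenever $f(\eta)$ is indefinite, so there exist $(\eta',x)\in B_{f}(V_{2})$ with $f(\eta)(x)\le 0$. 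If the bump $s(1,x)$ vanishes you land outside $B_{f}$; if it does not, you are not over $\eta$ and your fibre contraction---which uses the spectral splitting of $f(\eta)$ alone---does not apply. Either way the two stages fail to concatenate. Note also that your perturbation field $v(x)$ is constructed only on a neighbourhood $U$ of $\P(\ker f(\omega))$, whereas the problematic $x$ live near the much larger zero quadric of $f(\eta)$, so the field is not even defined where it is needed.

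What is actually required is to move $x$ as well during the base stage: a continuous transport of $x$ along a family of positive subspaces as $\eta'$ slides to $\eta$. This is where nondegeneracy of $f$ is really consumed in Agrachev's argument---it rules out the obstruction to choosing such a family continuously across the degenerate fibres in the small neighbourhood. A secondary point: your claim that the retraction of $B_{f}(V_{2})$ \emph{restricts} to one of $B_{f}(V_{1})$ is neither verified (the bump can push a path starting in $V_{1}$ out of $V_{1}$) nor needed. Once $\{\eta\}\times P^{+}(f(\eta))\hookrightarrow B_{f}(V)$ is a homotopy equivalence for each small convex $V$ separately, the 2-out-of-3 property gives $B_{f}(V_{1})\hookrightarrow B_{f}(V_{2})$ for free, with no compatibility between the two retractions required.
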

\begin{proof} The existence of $\delta_{1}$ is a direct consequence of Lemma 8 of \cite{Agrachev2}. The fact that $\delta_{1}$ can be chosen to be semialgebraic if $f$ is semialgebraic follows directly from the proof of Lemma 7 of \cite{Agrachev2}.
\end{proof}

\subsection{Negativity properties}

Let now $f:\Omega\to \Q$ and $\omega\in \Omega;$ let $M(\omega)<0$ be such that $$\lambda_{n+2-\ii^{-}(f(\omega))}(f(\omega))<M(\omega)$$ (notice that by definition $\lambda_{n+2-\ii^{-}(\omega)}(f(\omega))$ is the biggest negative eigenvalue of $f(\omega);$ see the above diagram for the numbering of the eigenvalues of a quadratic form).
Then by continuity there exists $\delta_{2}''(\omega)$ such that for every neighborhood $V$ of $\omega$ with $\textrm{diam}(V)<d_{2}''(\omega)$ and for every $\eta \in V$ $$\lambda_{n+2-\ii^{-}(f(\omega))}(f(\eta))<M(\omega).$$
Thus for every neighborhood $U$ of $\omega$ with $\textrm{diam}(U)<\delta_{2}''(\omega)$ we define: $$P^{-}(\omega, U)=\{x \in \P^{n}\, : \, \exists \eta \in U \,\textrm{s.t.}\, x\in P_{n+1-\ii^{-}(f(\omega))}^{-}(f(\eta))\}.$$
We claim the following.
\begin{lemma}For every $\omega \in \Omega$ there exists $0<\delta_{2}'(\omega)<\delta_{2}''(\omega)$ such that for every neighborhood of $\omega$ with $\textrm{diam}(V)<\delta_{2}'(\omega)$ $$\textrm{\emph{Cl}}(P^{-}(\omega, V))\subseteq \P^{n}\backslash \{f(\omega)(x)\geq 0\}.$$
\end{lemma}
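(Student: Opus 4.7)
The plan is to exploit continuity of the eigenvalues and spectral projectors of $f$ near $\omega$, together with a compactness argument on an incidence set inside $\Omega \times \P^{n}$. Write $i=\ii^{-}(f(\omega))$ and $j=n+1-i$. At $\omega$ we have the chain $\lambda_{j+1}(f(\omega))<M(\omega)<0\leq\lambda_{j}(f(\omega))$, so $f(\omega)$ is strictly negative on the negative eigenspace $(\L_{j}^{-})_{f(\omega)}$ away from the origin, bounded above by $\lambda_{j+1}(f(\omega))$ on the unit sphere. The goal is to propagate this strict negativity to every subspace $P_{j}^{-}(f(\eta))$ for $\eta$ close enough to $\omega$, and then use compactness to control the closure of $P^{-}(\omega,V)$.

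First I would show that $(\L_{j}^{-})_{f(\eta)}$ is well defined and continuous in $\eta$ on a small neighborhood of $\omega$. The upper bound $\lambda_{j+1}(f(\eta))<M(\omega)$ is already secured by the choice of $\delta_{2}''(\omega)$. Since $\lambda_{j}(f(\omega))\geq 0>M(\omega)$, continuity of eigenvalues in $\eta$ also gives $\lambda_{j}(f(\eta))>M(\omega)$ on a possibly smaller closed ball $\bar{B}(\omega,\delta)\cap\Omega$, and in particular $f(\eta)\in\D_{j}$ there. On this ball $(\L_{j}^{-})_{f(\eta)}$ coincides with the range of the spectral projector onto the part of the spectrum of the symmetric operator associated to $f(\eta)$ lying below $M(\omega)$; this projector is continuous in $\eta$ by the classical perturbation theorem, because $M(\omega)$ is uniformly separated from that spectrum on $\bar{B}(\omega,\delta)\cap\Omega$.

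Next I would invoke compactness on the incidence set
$$
Z_{\delta}=\{(\eta,x)\in(\bar{B}(\omega,\delta)\cap\Omega)\times\P^{n} : x\in P_{j}^{-}(f(\eta))\},
$$
which is closed inside a compact space and therefore compact. The continuous function $\psi(\eta,x)=f(\omega)(x)$ is strictly negative on $Z_{0}=\{\omega\}\times P_{j}^{-}(f(\omega))$, bounded above by $\lambda_{j+1}(f(\omega))<0$. A standard contradiction argument---if the desired negativity failed for every shrinking $\delta$, extract a convergent sequence in $Z_{\delta}$ whose limit lies in $Z_{0}$ with $\psi\geq 0$---produces $\delta_{2}'(\omega)\in(0,\min\{\delta,\delta_{2}''(\omega)\})$ such that $\psi<0$ throughout $Z_{\delta_{2}'(\omega)}$.

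Finally, for any neighborhood $V$ of $\omega$ with $\mathrm{diam}(V)<\delta_{2}'(\omega)$ we have $V\subseteq\bar{B}(\omega,\delta_{2}'(\omega))$, hence $P^{-}(\omega,V)$ is contained in the projection to $\P^{n}$ of $Z_{\delta_{2}'(\omega)}$, which is compact, thus closed. The inclusion therefore passes to $\mathrm{Cl}(P^{-}(\omega,V))$, and by construction $f(\omega)(x)<0$ at every point of this projection, which gives the stated containment in $\P^{n}\setminus\{f(\omega)(x)\geq 0\}$. The only delicate step is the continuity of $\eta\mapsto(\L_{j}^{-})_{f(\eta)}$ across the gap at $M(\omega)$, but once the spectral gap is secured this is a classical fact, so I do not anticipate a serious obstacle.
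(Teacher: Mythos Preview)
Your argument is correct and shares the same skeleton as the paper's proof: a compactness/contradiction argument on points $(\eta,x)$ with $x\in P_{j}^{-}(f(\eta))$ and $\eta\to\omega$. The difference is in which scalar function you track. You evaluate $f(\omega)(x)$ and therefore need the limit point $x_{\infty}$ to land in $P_{j}^{-}(f(\omega))$, which is why you invoke continuity of the spectral projector across the gap at $M(\omega)$. The paper instead evaluates $f(\eta)(x)$: for any $\eta$ with $\mathrm{diam}$-distance less than $\delta_{2}''(\omega)$ and any $x\in P_{j}^{-}(f(\eta))$ one has directly $f(\eta)(x)\leq\lambda_{j+1}(f(\eta))<M(\omega)$, and then only the continuity of the scalar map $(\eta,x)\mapsto f(\eta)(x)$ is needed to reach the contradiction with $f(\omega)(x_{k})\geq 0$. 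So the paper avoids the spectral-projector step altogether, which makes its version marginally more elementary; your version has the advantage of making the structure (closed incidence set, continuous function, compact base) more transparent.
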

\begin{proof} By absurd suppose for every $k\in \mathbb{N}$ the two sets $\textrm{Cl}({P^{-}(\omega, B(\omega, 1/k))})$ and $\{f(\omega)(x)\geq 0\}$ intersect. Then for every $k\in \N$ there exists a sequence $x_{k}^{l}\to x_{k}$ such that for every $x_{k}^{l}$ there exists $\omega_{k}^{l}\in B(\omega, 1/k)$ such that $x_{k}^{l}\in P^{-}_{n+1-\ii^{-}(\omega)}(f(\omega_{k}^{l}))$ and $f(\omega)(x_{k})\geq 0.$\\
Then it follows that $f(\omega_{k}^{l})(x_{k}^{l})<M(\omega)$ (recall that the function $(\omega, x)\mapsto f(\omega)(x)$ is defined, once the scalar product has been fixed, to be the restriction of $f(\omega)$ to the unit sphere covering $\P^{n}$) and, by extracting convergent subsequences, that  $$0\leq \lim_{k\to \infty}f(\omega)(x_{k})=\lim_{k\to \infty}f(\omega_{k})(x_{k})\leq M(\omega)$$ which is absurd since $M(\omega)<0$ by definition.
\end{proof}

Notice that in the case $f$ is semialgebraic then $\omega\mapsto M(\omega)$ can be chosen semialgebraic and hence $\omega\mapsto \delta_{2}'(\omega)$ also can be chosen to be semialgebraic.

\begin{lemma}\label{lemmaneg} For every $\omega\in \Omega$ there exists $0<\delta_{2}(\omega)<\delta_{2}''(\omega)$ such that for every neighborhood $V$ of $\omega$ with $\textrm{diam}(V)<\delta_{2}(\omega)$ the following holds: $$\emph{Cl}({P^{-}(\omega, V)})\subset \P^{n}\backslash \beta_{r}(B_{f}(V)).$$
Moreover in the case $f$ is semialgebraic, then $\omega\mapsto \delta_{2}(\omega)$ can be chosen semialgebraic.
\end{lemma}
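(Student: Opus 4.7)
The plan is to sharpen the previous lemma by a compactness-and-continuity argument: since by the previous lemma $f(\omega)$ is already strictly negative on the compact set $\mathrm{Cl}(P^{-}(\omega,V))$, we have a uniform negative upper bound, which by continuity of $f$ survives when we replace $f(\omega)$ by $f(\eta)$ for any $\eta$ close enough to $\omega$. This forces every point of $\mathrm{Cl}(P^{-}(\omega,V))$ to lie outside $\beta_r(B_f(V))$.

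Concretely, I would first pick $r_0>0$ with $2r_0<\delta_2'(\omega)$, so that the closed ball $\overline{B(\omega,r_0)}$ has diameter strictly less than $\delta_2'(\omega)$. Applying the previous lemma to $V=\overline{B(\omega,r_0)}$ yields the compact set $K\doteq\mathrm{Cl}(P^{-}(\omega,\overline{B(\omega,r_0)}))\subset\P^{n}$ on which $f(\omega)$ is strictly negative. By compactness of $K$ and continuity of $f(\omega)$, the quantity $N\doteq\max_{x\in K}f(\omega)(x)$ is well defined and strictly negative.

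Next I would use the fact that the evaluation map $(\eta,x)\mapsto f(\eta)(x)$ is continuous on the compact set $\overline{B(\omega,r_0)}\times K$. A standard uniform continuity argument then produces $r_1\in(0,r_0)$ such that
$$|f(\eta)(x)-f(\omega)(x)|<|N|/2,\qquad \forall\,\eta\in B(\omega,r_1),\ \forall\,x\in K.$$
Setting $\delta_2(\omega)\doteq r_1$, any neighborhood $V$ of $\omega$ with $\mathrm{diam}(V)<\delta_2(\omega)$ satisfies $V\subset B(\omega,r_1)\subset B(\omega,r_0)$, because $\omega\in V$. Therefore $\mathrm{Cl}(P^{-}(\omega,V))\subset K$, and for every $\eta\in V$ and $x\in\mathrm{Cl}(P^{-}(\omega,V))$ one has
$$f(\eta)(x)<f(\omega)(x)+|N|/2\le N+|N|/2=N/2<0.$$
This says exactly that no such $x$ lies in $\beta_r(B_f(V))$, giving the desired inclusion.

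For the semialgebraic refinement, I would observe that each ingredient above is semialgebraic in $\omega$: the function $\delta_2'$ is semialgebraic by hypothesis, the set $K$ depends semialgebraically on $(\omega,r_0)$, the value $N$ is semialgebraically defined by a max over a semialgebraic family, and the continuity modulus can be chosen semialgebraically by applying the curve-selection/definable Skolem function technique already invoked in the proofs of Lemma~\ref{lemmatrasv} and Lemma~\ref{lemmadef}. The main technical obstacle is this last point: one must ensure the choice $\omega\mapsto r_1(\omega)$ (hence $\delta_2(\omega)$) can be made definable uniformly in $\omega$, rather than merely pointwise. This is handled by writing the requirement on $r_1$ as a first-order semialgebraic condition in $(\omega,r_1)$ and applying semialgebraic selection to obtain a semialgebraic (possibly discontinuous) solution $\omega\mapsto\delta_2(\omega)$, exactly as done for $\delta_1$ and $\delta_2'$ in the preceding lemmas.
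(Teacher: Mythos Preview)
Your proof is correct and follows essentially the same compactness-plus-continuity strategy as the paper: both start from the previous lemma's separation of $\mathrm{Cl}(P^{-}(\omega,W))$ from $\{f(\omega)\ge 0\}$ and then use continuity of $(\eta,x)\mapsto f(\eta)(x)$ on a compact set to propagate this to all $\eta$ near $\omega$. The only cosmetic difference is that you argue quantitatively via the uniform bound $N=\max_K f(\omega)<0$ and uniform continuity, whereas the paper phrases the same step topologically via a tube-lemma/fundamental-neighborhood argument on the compact set $\{a\ge 0\}$; the semialgebraic addendum is handled identically in spirit.
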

\begin{proof}
Let $W$ be a neighborhood of $\omega$ with $\textrm{diam}(W)<\delta_{2}'(\omega).$ Then the two compact sets $\textrm{Cl}({P^{-}(\omega, W)})$ and $\{f(\omega)(x)\geq 0\}$ do not intersect by the previous Lemma. Consider the continuous function $a:\textrm{Cl}({W})\times \P^{n}\to \R$ defined by $a(\eta, x)=f(\eta)(x)$ and a neighborhood $U$ of $\{f(\omega)(x)\geq 0\}$ in $\P^{n}$ disjoint form $\textrm{Cl}({P^{-}(\omega, W)}).$ Then $\beta_{r}^{-1}(U)\cap \{a\geq 0\}$ is an open neighborhood of $\{\omega\}\times \{f(\omega)(x)\geq 0\}$ in $\{a \geq 0\}.$ Consider now $b:\{a\geq 0\}\to \R$ defined by $(\eta,x)\mapsto d(\eta, \omega).$ Then, since $\{a\geq 0\}$ is compact, the family $\{b^{-1}[0,\delta)\}_{\delta>0}$ is a fundamental system of neighborhoods of $b^{-1}(0)=\{\omega\}\times \{f(\omega)(x)\geq 0\}$ in $\{a\geq 0\}.$ Thus there exists $\overline{\delta}$ such that $b^{-1}[0,\overline{\delta})\subset \beta_{r}^{-1}(U)\cap \{a\geq 0\}.$ Hence any $\delta_{2}(\omega)$ such that $B(\omega, 3\delta_{2}(\omega))\subset B(\omega, \overline{\delta})\cap W$ satisfies the requirement, since every neighborhood $V$ of $\omega$ with $\textrm{diam}(V)<\delta_{2}(\omega)$ is contained in $B(\omega, 3\delta_{2}(\omega))$ and
\begin{align*}\textrm{Cl}({P^{-}(\omega, B(\omega, 3\delta_{2}(\omega))})&\subset \textrm{Cl}({P^{-}(\omega, W)})\\ &\subset \P^{n}\backslash\beta_{r}( \{a\geq 0\})\subset \P^{n}\backslash \beta_{r}(B_{f}(B(\omega, 3\delta_{2}(\omega)))).\end{align*}
It is clear from the construction that in the case $f$ is semialgebraic the function $\omega\mapsto \delta_{2}(\omega)$ can be chosen semialgebraic too.
\end{proof}

\subsection{Convexity properties}

We discuss here some useful facts related to convex open sets of $\R^{k}.$ We begin with the following; recall that for a given convex function $a$ and $c\in \R$ the set $\{a<c\}$ is convex.

\begin{lemma}\label{conv1}Let $a:\R^{n}\to [0, \infty)$ be a proper convex function of class $C^{2}$, $x_{0}\in \R^{n}$ such that $da_{x_{0}}\equiv 0$ and the Hessian $\textrm{He}(a)_{x_{0}}$ of $a$ at $x_{0}$ is positive definite. Let also $\psi:\R^{n}\to \R^{n}$ be a diffeomorphism. Then there exists $\overline{\epsilon}>0$ such that for every $\epsilon<\overline{\eps}$
$$\psi(\{a< \eps\})\quad\textrm{is convex}.$$
\end{lemma}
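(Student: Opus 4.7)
The plan is to pass to the pulled-back function $b=a\circ\psi^{-1}$, so that $\psi(\{a<\eps\})=\{b<\eps\}$, and to show that for $\eps$ small enough the sublevel set $\{b<\eps\}$ lies inside a convex open neighborhood $U$ of $y_{0}:=\psi(x_{0})$ on which $b$ is itself strictly convex. Once both facts are in place, $\{b<\eps\}$ is the sublevel set of the strictly convex function $b|_{U}$ on the convex open set $U$, hence convex.

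First I analyze the $2$-jet of $b$ at $y_{0}$. Since $da_{x_{0}}=0$, the chain rule gives $db_{y_{0}}=0$; differentiating once more and observing that the terms involving $da_{x_{0}}$ vanish, one obtains
$$\textrm{He}(b)_{y_{0}}(u,v)=\textrm{He}(a)_{x_{0}}\bigl(D\psi^{-1}_{y_{0}}u,\ D\psi^{-1}_{y_{0}}v\bigr),$$
so $\textrm{He}(b)_{y_{0}}$ is positive definite because $\textrm{He}(a)_{x_{0}}$ is and $D\psi^{-1}_{y_{0}}$ is a linear isomorphism. Continuity of $\textrm{He}(b)$ (guaranteed by $b\in C^{2}$) yields an open Euclidean ball $U$ around $y_{0}$ on which $\textrm{He}(b)$ remains positive definite; on this convex $U$ the function $b$ is strictly convex.

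Second I show that the sublevel sets $\{a<\eps\}$ shrink to $\{x_{0}\}$ as $\eps\downarrow m:=a(x_{0})=\inf a\geq 0$. Positive definiteness of $\textrm{He}(a)_{x_{0}}$ makes $x_{0}$ a strict local minimum, and convexity promotes it to the unique global minimum; properness makes the family $\{a\leq\eps\}$ a nested family of compact sets with intersection $\{x_{0}\}$, hence by compactness it converges to $\{x_{0}\}$ in the Hausdorff sense. If $\eps\leq m$ the set $\{a<\eps\}$ is empty and the lemma is trivial. Otherwise, for $\eps-m$ small enough, $\psi(\{a<\eps\})=\{b<\eps\}\subset U$, and the first step concludes the proof.

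The one point requiring actual care is the Hausdorff-shrinking step: without properness (or without the uniqueness of the minimum coming from the convex strict local minimum) the sublevels $\{b<\eps\}$ could have far-away components on which $\psi$ could fail to preserve convexity no matter how small $\eps$ is chosen. The Hessian bound alone only controls $b$ near $y_{0}$, so getting the sublevels into the $U$ provided by Hessian continuity is the real content of the lemma; everything else is a standard consequence of strict convexity of $b|_{U}$.
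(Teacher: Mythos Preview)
Your proof is correct and follows essentially the same route as the paper: pull back via $b=a\circ\psi^{-1}$ (the paper writes $\hat a=a\circ\phi$ with $\phi=\psi^{-1}$), compute that $\textrm{He}(b)_{y_0}$ is positive definite, use continuity of the Hessian to obtain a convex ball $U$ on which $b$ is convex, and invoke properness to force $\{b<\eps\}\subset U$ for small $\eps$. Your version is slightly more explicit than the paper's about \emph{why} properness yields the shrinking of sublevel sets to $\{x_0\}$ (via uniqueness of the global minimum and the nested-compact argument), a step the paper leaves implicit.
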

\begin{proof} Let $\phi$ be the inverse of $\psi,$ $y_{0}=\psi(x_{0})$ and $\hat{a}\doteq a \circ \phi.$ Then the set $\psi(\{a<\eps\})$ equals $\{\hat{a}<\eps\}.$ Since $da_{x_{0}}\equiv 0,$ then
$$\textrm{He}(\hat{a})_{y_{0}}={}^{t}J\phi_{y_{0}}\textrm{He}(a)_{x_{0}}J\phi_{y_{0}}>0$$ and thus, by continuity of the map $y\mapsto \textrm{He}(\hat{a})_{y},$ the function $\hat{a}$ is convex on $B(y_{0}, \eps')$ for sufficiently small $\eps';$ hence for every $c>0$ the set $\{\hat{a}_{|B(y_{0},\eps')}<c\}$ is convex.
Since $a$ is proper, then there exists $\eps$ such that $\{y\, : \, a(\phi(y))<\eps\}\subset B(y_{0},\eps').$ Thus $\{\hat{a}<\eps\}=\{\hat{a}_{|B(y_{0},\eps')}<\eps\}$ is convex.\end{proof}

Consider a family of functions $a_{w}:x\mapsto a(x+x_{0}-w), w\in W\subset \R^{n}$ with compact closure, with $a$ satisfying the conditions of the previous lemma. Since $\textrm{He}(a_{w})_{x}=\textrm{He}(a)_{x},$ then the exstimate on $\textrm{He}(a_{w})_{w}$ can be made uniform on $W.$ In particular taking $a(x)=|x|^{2}$ we derive the following corollary.

\begin{coro}
Let $U$ be an open subset of $\R^{n}$ and $\psi:U\to \R^{n}$ be a diffeomorphism onto its image. Then for every $x\in U$ there exists $\delta_{c}(x)>0$ such that for every $B(y,r)\subset B(x, 3\delta_{c}(x))$ with $r<\delta_{c}(x)$ then $$\psi(B(y,r))\quad \textrm{is convex}.$$
\end{coro}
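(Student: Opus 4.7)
The plan is to apply Lemma \ref{conv1} to the one-parameter family of functions $a_w(z) = |z-w|^2$ indexed by the would-be centers $w$, and extract a uniform estimate on the admissible radius.

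First, I would shrink $U$ if necessary and choose a compact neighborhood $K \subset U$ of $x$; set $L = \psi(K)$, which is a compact neighborhood of $\psi(x)$ in $\psi(U)$. Write $\phi = \psi^{-1}:\psi(U) \to U$. For each $w \in K$ the function $a_w:\R^n \to [0,\infty)$ is proper, $C^2$, convex, with $da_w|_w \equiv 0$ and $\mathrm{He}(a_w)_z \equiv 2I$ for every $z \in \R^n$; in particular the Hessian is independent of both $w$ and $z$. This is the crucial uniformity that the family $|z-w|^2$ enjoys, and is exactly what the remark preceding the corollary calls out.

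Next, I would transfer the Hessian computation to $\psi$-coordinates. Set $\hat a_w = a_w \circ \phi$, defined on $\psi(U)$. Repeating the calculation of Lemma \ref{conv1}, at the critical point $y_w = \psi(w)$ we have
\[
\mathrm{He}(\hat a_w)_{y_w} = 2\,{}^tJ\phi_{y_w}\,J\phi_{y_w},
\]
which is positive definite because $\phi$ is a diffeomorphism. The map $(w,y) \mapsto \mathrm{He}(\hat a_w)_y$ is continuous on $K \times L$ (it depends on $\phi$ and its second derivatives together with the vector $\phi(y) - w$, all of which are continuous), and is positive definite on the compact ``diagonal'' $\{(w, \psi(w)) : w \in K\}$. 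By a standard compactness argument I can therefore find $\rho > 0$ and $c > 0$ such that
\[
\mathrm{He}(\hat a_w)_y \;\ge\; c\,I \qquad \text{for every } w \in K \text{ and every } y \in L \text{ with } |y - \psi(w)| \le \rho.
\]

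Finally, I choose $\delta_c(x)$ small enough that: (i) $B(x, 3\delta_c(x)) \subset K$; (ii) for every $y \in B(x, 3\delta_c(x))$ and every $r < \delta_c(x)$, the ball $B(y,r)$ is contained in $K$ and its image $\psi(B(y,r))$ is contained in the $\rho$-neighborhood of $\psi(y)$ inside $L$ (this is possible by uniform continuity of $\psi$ on $K$). Given such $y$ and $r$, the function $\hat a_y$ is strictly convex on the $\rho$-ball around $\psi(y)$, and hence the sublevel set $\{\hat a_y < r^2\}$ inside that ball is convex; since $\psi(B(y,r)) = \{\hat a_y < r^2\} \cap \psi(U)$ and this set is contained in the $\rho$-ball by construction, it coincides with the convex sublevel set. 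Therefore $\psi(B(y,r))$ is convex, as required.

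The main obstacle is the double uniformity --- both in the center $y$ (which is allowed to roam in $B(x, 3\delta_c(x))$, not just sit at $x$) and in the radius $r$. This is exactly why the choice $a(z) = |z|^2$ is essential: it gives a constant Hessian in the domain, so the only $w$-dependence in $\mathrm{He}(\hat a_w)_y$ comes through $\phi$, allowing a clean compactness argument. Everything else is bookkeeping on top of Lemma \ref{conv1}.
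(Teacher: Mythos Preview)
Your proposal is correct and follows precisely the route the paper sketches in the paragraph immediately preceding the corollary: take the family $a_w(z)=|z-w|^2$, exploit that its Hessian is the constant $2I$ so that the estimate in Lemma~\ref{conv1} becomes uniform in the center $w$ over a compact set, and then choose $\delta_c(x)$ accordingly. The only point worth tightening is to ensure from the outset that the $\rho$-ball around each $\psi(y)$ lies inside $\psi(U)$ (e.g.\ by first taking $K$ to be a closed ball whose slightly larger concentric ball is still in $U$), so that $\hat a_y$ is defined and convex on the full convex ball $B(\psi(y),\rho)$ and the sublevel-set argument goes through cleanly.
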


\subsection{Construction of regular covers}
We recall the following useful result describing the local topology of the space of quadratic forms.

\begin{propo}\label{topquad}Let $q_{0}\in \mathcal{Q}$ be a quadratic map and let $V$ be its kernel. Then there exists a neighborhood $U_{q_{0}}$ of $q_{0}$ and a smooth semialgebraic map $\phi:U_{q_{0}}\to \Q(V)$ such that: 1) $\phi(q_{0})=0$; 2) $\ii^{-}(q)=\ii^{-}(q_{0})+\ii^{-}(\phi(q));$ 3) $\dim\ker (q)=\dim\ker(\phi(q));$ 4) $d\phi_{q_{0}}(p)=p_{|V}.$
\end{propo}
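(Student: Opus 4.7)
\medskip\noindent \textbf{Proof proposal.} The plan is to prove a Schur complement / splitting lemma for quadratic forms: any form $q$ close to $q_0$ is congruent to $\phi(q) \oplus q|_W$ for a suitable complement $W$ to $V = \ker q_0$, where $\phi(q)$ is the Schur complement of the $W$-block and hence a form on $V$.

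First I would fix a direct sum decomposition $\R^{n+1} = V \oplus W$, where $V = \ker q_0$, so that $q_0|_W$ is nondegenerate. Writing quadratic forms in block form with respect to this splitting, any $q \in \Q$ corresponds to a symmetric matrix
$$
Q = \begin{pmatrix} Q_{VV} & Q_{VW} \\ Q_{WV} & Q_{WW} \end{pmatrix},
$$
and since $Q_{0,WW} = q_0|_W$ is invertible, there is an open neighborhood $U_{q_0}$ of $q_0$ on which $Q_{WW}$ remains invertible. On $U_{q_0}$ define
$$
\phi(q)(v) = \bigl(Q_{VV} - Q_{VW}\,Q_{WW}^{-1}\,Q_{WV}\bigr)(v,v), \qquad v \in V.
$$
This map is smooth and semialgebraic because the entries are rational functions of the coefficients of $q$ with nonvanishing denominator $\det Q_{WW}$.

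The key identity is the block congruence
$$
Q = \begin{pmatrix} I & Q_{VW}Q_{WW}^{-1} \\ 0 & I \end{pmatrix}
    \begin{pmatrix} \Phi(q) & 0 \\ 0 & Q_{WW} \end{pmatrix}
    \begin{pmatrix} I & 0 \\ Q_{WW}^{-1}Q_{WV} & I \end{pmatrix},
$$
where $\Phi(q)$ is the symmetric matrix of $\phi(q)$. The outer factors are invertible and depend smoothly on $q \in U_{q_0}$, so $q$ is congruent to the direct sum $\phi(q) \oplus q|_W$ through a smooth family of linear isomorphisms. From this splitting, property (1) is immediate because $Q_{0,VV}=0$ and $Q_{0,VW}=0$; property (3) follows from invertibility of $Q_{WW}$ on $U_{q_0}$, which forces $\dim\ker q = \dim\ker \phi(q)$; property (2) follows because congruence preserves the signature and $\ii^-(q|_W) = \ii^-(q_0|_W) = \ii^-(q_0)$ after shrinking $U_{q_0}$ if necessary so that the spectrum of $Q_{WW}$ stays away from $0$.

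Finally, to verify property (4), I would differentiate the rational expression for $\phi(q)$ at $q_0$. Writing $q = q_0 + tp$ and expanding, the contribution $-Q_{VW}Q_{WW}^{-1}Q_{WV}$ is of order $t^2$ because $Q_{0,VW}=0$, while the term $Q_{VV}$ contributes $tp_{VV}$ to leading order. Hence $d\phi_{q_0}(p) = p_{VV} = p|_V$, which is precisely (4). The only step that requires any care is checking that the neighborhood $U_{q_0}$ can be chosen simultaneously so that $Q_{WW}$ is invertible and its negative inertia index is constant; both are open conditions, so a small enough $U_{q_0}$ works, and semialgebraicity of $\phi$ is automatic from the explicit rational formula.
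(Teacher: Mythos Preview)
Your Schur complement argument is correct and is the standard way to prove this splitting lemma for quadratic forms; the block congruence you wrote down gives the congruence $q \sim \phi(q)\oplus q|_W$, from which properties (1)--(4) follow exactly as you indicate. Note that the paper does not actually give its own proof of this proposition: it simply refers the reader to \cite{Agrachev2}. Your argument is in all likelihood the same one used there (it is essentially the only natural proof), so there is nothing to compare.
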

For the proof of the previous we refer the reader to \cite{Agrachev2}.

\begin{lemma}
Let $f:\Omega \to \mathcal{Q}$ be a smooth function transversal to all strata of $\mathcal{Q}_{0}=\coprod Z_{j}.$ Then for every $\omega\in \Omega$ there exists $\delta_{3}'(\omega)>0$ and $\psi:B(\omega, \delta_{3}'(\omega))\to \Q(\ker f(\omega))\times \R^{l}$ a diffeomorphism onto its image such that
$$\begin{tikzpicture}[xscale=1.7, yscale=1.7]

    \node (A2_0) at (2, 0) {$\Q(\ker f(\omega))$};
    \node (A1_1) at (1, 1) {$B(\omega, \delta_{3}'(\omega))$};
    \node (A3_1) at (3, 1) {$\Q(\ker f(\omega))\times \R^{l}$};

    \path (A1_1) edge [->] node [auto] {$\psi$} (A3_1);
    \path (A1_1) edge [->] node [auto,swap] {$\phi \circ f$} (A2_0);
    \path (A3_1) edge [->] node [auto] {$p_{1}$} (A2_0);

      \end{tikzpicture}
$$
is commutative. Moreover in the case $f$ is semialgebraic then $\omega\mapsto \delta_{3}'(\omega)$ can be chosen to be semialgebraic too.
\end{lemma}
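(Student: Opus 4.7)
The plan is to reduce the statement to a straight application of the local submersion theorem, after using Proposition~\ref{topquad} to extract the transverse direction to the stratum of $\mathcal{Q}_0$ through $f(\omega)$.

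First I would set $q_0=f(\omega)$, $V=\ker q_0$ and apply Proposition~\ref{topquad} to obtain the semialgebraic neighborhood $U_{q_0}\subset\mathcal{Q}$ and the smooth semialgebraic map $\phi:U_{q_0}\to\mathcal{Q}(V)$ satisfying properties 1)--4). The key consequence is that, near $q_0$, the stratum $Z_j$ of $\mathcal{Q}_0$ containing $q_0$ is characterized by the conditions $\dim\ker q=\dim V$ and $\ii^{-}(q)=\ii^{-}(q_0)$; by properties 2) and 3) this is exactly $\phi^{-1}(0)\cap U_{q_0}$. Since $\phi$ is a submersion at $q_0$ (property 4) says $d\phi_{q_0}(p)=p|_V$, which is surjective onto $\mathcal{Q}(V)$), its zero set is a smooth submanifold with $T_{q_0}Z_j=\ker d\phi_{q_0}$.

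Next I would translate the transversality of $f$ to $Z_j$ into a submersion statement for $\phi\circ f$. Transversality at $\omega$ reads $\operatorname{im}(df_\omega)+T_{q_0}Z_j=\mathcal{Q}$; composing with $d\phi_{q_0}$, which is surjective and kills $T_{q_0}Z_j$, yields
$$
d(\phi\circ f)_\omega(T_\omega\Omega)=d\phi_{q_0}\bigl(\operatorname{im}(df_\omega)\bigr)=d\phi_{q_0}(\mathcal{Q})=\mathcal{Q}(V).
$$
Thus $\phi\circ f:B(\omega,r)\to\mathcal{Q}(V)$ is a submersion at $\omega$, and we may invoke the local form of submersions: there exist an open ball $B(\omega,\delta'_3(\omega))$ and a diffeomorphism $\psi$ onto its image in $\mathcal{Q}(V)\times\R^l$, with $l=\dim T_\omega\Omega-\dim\mathcal{Q}(V)$, such that $p_1\circ\psi=\phi\circ f$. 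This is exactly the commutative diagram required.

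Finally, for the semialgebraic refinement, I would observe that every object used in the construction is semialgebraic: $\phi$ is semialgebraic by Proposition~\ref{topquad}, the submersion normal form can be realized semialgebraically by completing $\phi\circ f$ with a semialgebraic choice of complementary coordinates on $T_\omega\Omega$ (e.g.\ take $l$ coordinate projections whose differentials are linearly independent modulo $\ker d(\phi\circ f)_\omega$), and then $\psi$ is a semialgebraic local diffeomorphism. The admissible radius $\delta'_3(\omega)$ can then be defined by the (semialgebraic) infimum of those $r>0$ for which $\psi$ remains an injective immersion on $B(\omega,r)$, producing a semialgebraic, though in general discontinuous, function of $\omega$. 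The only step that requires genuine care is the identification $T_{q_0}Z_j=\ker d\phi_{q_0}$ locally; once this is granted the rest is a mechanical application of standard tools.
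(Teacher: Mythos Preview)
Your argument is correct and is essentially the same as the paper's: both apply Proposition~\ref{topquad} to get $\phi$, use transversality together with $d\phi_{q_0}|_{T_{q_0}Z_j}=0$ to conclude that $\phi\circ f$ is a submersion at $\omega$, and then invoke the (semialgebraic) rank theorem. The only cosmetic differences are that the paper treats the nonsingular case $\det f(\omega)\neq 0$ separately (where $\mathcal{Q}(\ker f(\omega))=0$ and the statement is trivial), and that the paper uses only the inclusion $\phi(Z_j)\subset\{0\}$ rather than your stronger identification $T_{q_0}Z_j=\ker d\phi_{q_0}$; since your displayed computation in fact only needs $d\phi_{q_0}(T_{q_0}Z_j)=0$, this makes no difference.
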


\begin{proof}
If $\det (f(\omega))\neq 0$ then let $\delta_{3}'(\omega)>0$ be such that $f(B(\omega, \delta_{3}'(\omega)))\cap \mathcal{Q}_{0}=\emptyset;$ in the contrary case let $f(\omega)\in Z_{j}$ for some $j.$
Consider $\phi:U_{f(\omega)}\to \Q(\ker f(\omega))$ the map given by the previous lemma. Since $d\phi_{f(\omega)}p=p_{|\ker f(\omega)}$ then $d\phi_{f(\omega)}$ is surjective. On the other hand by transversality of $f$ to $Z_{j}$ we have:
$$ \textrm{im} (df_{\omega})+T_{f(\omega)}Z_{j}=\mathcal{Q}$$
Since $\phi(Z_{j})=\{0\},$ which implies $(d\phi_{f(\omega)})|_{T_{f(\omega)}Z_{j}}=0,$ then
$$\Q(\ker f(\omega))=\textrm{im}(d\phi_{f(\omega)})=\textrm{im}(d(\phi\circ f)_{\omega})$$
which tells $\phi \circ f$ is a submersion at $\omega.$ Thus by the rank theorem there exists $U_{\omega}$ and a diffeomorphism onto its image $\psi:U_{\omega}\to \Q(\ker f(\omega))\times \R^{l}$ such that $p_{1}\circ \psi=\phi\circ f.$ Taking $\delta_{3}'(\omega)>0$ such that $B(\omega, \delta_{3}'(\omega))\subset U_{\omega}$ concludes the proof.\\
In the case $f$ is semialgebraic, then it is clear by construction and semialgebraic rank theorem that $\delta_{3}'$ can be chosen semialgebraic too.
\end{proof}

\begin{coro}\label{ac}
For every $\omega\in \Omega$ there exists $\delta_{3}(\omega)>0$ such that for every $B(\omega',r)\subset B(\omega, 3\delta_{3}(\omega))$ with $r<\delta_{3}(\omega)$ then
$$\psi(B(\omega',r))\quad \textrm{is convex}.$$
In particular if $\omega\in B(\omega_{k},r_{k})$ for some $\omega_{0},\ldots,\omega_{i}\in \Omega$ and $r_{0},\ldots,r_{i}<\delta_{3}(\omega),$ then for every $j\in \N$ the space
$$\{\eta\in \Omega\, : \, \ii^{-}(f(\eta))\leq n-j\}\cap (\bigcap_{k=0}^{i}B(\omega_{k},r_{k}))\quad \textrm{is acyclic}.$$
\end{coro}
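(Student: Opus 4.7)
The plan is to push everything through the diffeomorphism $\psi$ of the preceding lemma and use the algebraic structure provided by Proposition~\ref{topquad}.

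For the first assertion, I would apply the corollary to Lemma~\ref{conv1} to the diffeomorphism
$$
\psi:B(\omega,\delta_{3}'(\omega))\to\mathcal{Q}(\ker f(\omega))\times\R^{l},
$$
obtaining a radius $\delta_{c}(\omega)>0$ such that sufficiently small balls in the domain have convex $\psi$-image. Setting $\delta_{3}(\omega):=\tfrac{1}{3}\min\{\delta_{3}'(\omega),\,\delta_{c}(\omega)\}$ and invoking the triangle inequality would then ensure that any $B(\omega',r)\subset B(\omega,3\delta_{3}(\omega))$ with $r<\delta_{3}(\omega)$ satisfies the hypotheses of that corollary, so $\psi(B(\omega',r))$ is convex.

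For the acyclicity statement, suppose $\omega\in B(\omega_{k},r_{k})$ with $r_{k}<\delta_{3}(\omega)$ for $k=0,\ldots,i$. The triangle inequality yields $B(\omega_{k},r_{k})\subset B(\omega,3\delta_{3}(\omega))$; hence by the first part each $\psi(B(\omega_{k},r_{k}))$ is convex, and therefore so is
$$
C\;:=\;\bigcap_{k=0}^{i}\psi\bigl(B(\omega_{k},r_{k})\bigr)\;=\;\psi\Bigl(\bigcap_{k=0}^{i}B(\omega_{k},r_{k})\Bigr).
$$
Setting $q_{0}:=f(\omega)$, $V:=\ker q_{0}$, and $m:=n-j-\ii^{-}(q_{0})$, I would use the identity $\ii^{-}(q)=\ii^{-}(q_{0})+\ii^{-}(\phi(q))$ from Proposition~\ref{topquad} together with the relation $p_{1}\circ\psi=\phi\circ f$ of the preceding lemma to translate the condition $\ii^{-}(f(\eta))\le n-j$ into $\ii^{-}(p_{1}(\psi(\eta)))\le m$. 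Consequently the set to be analyzed is carried by $\psi$ homeomorphically onto
$$
S\;=\;C\cap\bigl(\{q\in\mathcal{Q}(V):\ii^{-}(q)\le m\}\times\R^{l}\bigr).
$$

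The final step, and the conceptual heart of the proof, is to show $S$ is acyclic by exhibiting a star-point. If $m<0$ then $\{q:\ii^{-}(q)\le m\}=\varnothing$ and hence $S=\varnothing$, the degenerate acyclic case. If $m\ge 0$, property~1 of Proposition~\ref{topquad} gives $\phi(q_{0})=0$, so $\psi(\omega)=(0,v_{0})\in S$, and I would argue that $S$ is star-shaped at this point: given $(q,v)\in S$, the segment $t\mapsto(tq,\,tv+(1-t)v_{0})$, $t\in[0,1]$, stays in the convex set $C$, while on the first coordinate $\ii^{-}(tq)=\ii^{-}(q)\le m$ for $t\in(0,1]$ and $\ii^{-}(0)=0\le m$ at $t=0$, so the whole segment lies in $S$. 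This makes $S$ contractible, hence acyclic. The main obstacle is purely bookkeeping — correctly identifying the pushforward of the index condition via $\psi$ — after which the fact that $\{q:\ii^{-}(q)\le m\}\subset\mathcal{Q}(V)$ is a cone star-shaped at the zero form combines effortlessly with the convexity of $C$ to deliver contractibility.
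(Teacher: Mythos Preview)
Your proof is correct and follows essentially the same route as the paper: push through $\psi$, use the index identity from Proposition~\ref{topquad} to translate the condition $\ii^{-}(f(\eta))\le n-j$ into a condition on the first coordinate, and then exploit that the resulting set is the intersection of a convex set with a cone having apex $\psi(\omega)$. Your write-up is in fact more explicit than the paper's---you spell out the star-shaped contraction and handle the empty case separately, whereas the paper simply invokes the ``linear conical structure with respect to $\psi(\omega)$''---but the underlying idea is identical.
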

\begin{proof}
The first part of the statement follows by applying the previous lemmas to $\psi:U_{\omega}\to \Q(\ker f(\omega))\times \R^{l}.$\\
For the second part notice that by Proposition \ref{topquad} we have for every $\eta\in U_{\omega}$ (using the above notations):
$$\ii^{-}(f(\eta))=\ii^{-}(f(\omega))+\ii^{-}(p_{1}(\psi(\eta))).$$
This implies that, setting as above $\Omega_{n-j}(f)\doteq\{\eta\in \Omega\, : \, \ii^{-}(f(\eta))\leq n-j\},$
$$\psi(U_{\omega}\cap\Omega_{n-j}(f))\subseteq\Q_{n-j}(\ker f(\omega))\times \R^{l},$$ where $\Q_{n-j}(\ker (f(\omega)))=\{q\in \Q(\ker f(\omega)) \, :\, \ii^{-}(q)\leq n-j\}.$
Since for each $k=0,\ldots,i$ the set $\psi(B(\omega_{k},r_{k}))$ is convex, then
$$\bigcap_{k=0}^{i}\psi(B(\omega_{k},r_{k}))\quad \textrm{is convex}$$
and by hypothesis it contains $\psi(\omega).$ Since $\Q_{n-j}(\ker f(\omega))\times \R^{l}$ (if nonempty) has linear conical structure with respect to $\psi(\omega),$ then
$$\psi(\Omega_{n-j}(f))\cap  \bigcap_{k=0}^{i}\psi(B(\omega_{k},r_{k}))\quad \textrm{is acyclic}$$
and since $\psi:\bigcap_{k}B(\omega_{k},r_{k})\subset U_{\omega} \to\Q(\ker f(\omega))\times \R^{l}$ is a homeomorphism onto its image the conclusion follows.

\end{proof}
Let now $f:\Omega \to \mathcal{Q}$ be smooth, semialgebraic and transversal to all strata of $\mathcal{Q}_{0}=\coprod Z_{j}.$ Then we define $\delta:\Omega \to (0, \infty)$ by $$\delta(\omega)=\min \{\delta_{1}(\omega), \delta_{2}(\omega), \delta_{3}(\omega)\}.$$
By construction $\delta$ can be chosen to be semialgebraic. Under this assumption we prove the following.

\begin{lemma}\label{lemmacover}Let $\mathfrak{W}$ be an open cover of $\Omega$ and $f$ and $\delta$ as above. Then there exists a locally finite refinement $\mathfrak{U}=\{V_{\alpha}=B(x_{\alpha},\delta_{\alpha}), x_{\alpha}\in \Omega\}_{\alpha \in A}$ such that for every multi-index $\bar\alpha=(\alpha_{0}\cdots \alpha_{i})$ such that  $V_{\bar\alpha}\neq \emptyset$ there exists $\omega_{\bar \alpha}\in V_{\bar\alpha}$ such that for every $k=0,\ldots, i$ the following holds:
$$B(x_{\alpha_k},\delta_{\alpha_k})\subset B(\omega_{\bar\alpha},\delta(\omega_{\bar\alpha})).$$
Moreover if for every $\bar\alpha$ multi-index we let $n_{\bar\alpha}$ be the minimum of $\ii^{-}\circ f$ over $V_{\bar\alpha}\neq \emptyset,$ then the cover $\mathfrak{U}$ can be chosen as to satisfy $$n_{\alpha_{0}\cdots \alpha_{i}}=\max\{n_{\alpha_{0}},\ldots  , n_{\alpha_{i}}\}.$$

\end{lemma}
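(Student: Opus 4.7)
I would produce $\mathfrak U$ by first fixing a semialgebraic radius function $r(\omega)>0$ on $\Omega$, extracting a finite subcover by compactness of $\Omega=K^\circ\cap S^k$, and then performing a hierarchical refinement driven by the finitely many level sets of $\ii^{-}\circ f$.

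For each $\omega\in\Omega$ I would choose $r(\omega)$ semialgebraic so that $B(\omega,3r(\omega))$ simultaneously satisfies: (i) it lies in some $W\in\mathfrak W$, ensuring refinement; (ii) it lies in the open set $\{\eta:\ii^{-}(f(\eta))\ge \ii^{-}(f(\omega))\}$, which is open by lower semicontinuity of $\ii^{-}\circ f$; and (iii) $3r(\omega)\le\delta(\omega)/4$, together with the control $\delta(\eta)\ge\delta(\omega)/2$ for every $\eta\in B(\omega,3r(\omega))$, obtained by shrinking $r$ using the semialgebraic structure of $\delta$. Using compactness, extract a finite subfamily $\{V_\alpha=B(x_\alpha,\delta_\alpha)\}$ of $\{B(\omega,r(\omega)/3)\}$ covering $\Omega$, with $\delta_\alpha=r(x_\alpha)/3$.

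Condition (ii) forces $n_\alpha=\ii^{-}(f(x_\alpha))$, so the combinatorial property $n_{\bar\alpha}=\max_{k}n_{\alpha_k}$ follows as soon as one knows that for every $\bar\alpha$ with $V_{\bar\alpha}\neq\emptyset$ the center $x_{\alpha_{k^\ast}}$ maximizing $\ii^{-}(f(x_{\alpha_k}))$ lies in $V_{\bar\alpha}$. The plan, and the main technical obstacle, is to arrange the cover inductively over the finite sequence of levels $m_1<\cdots<m_s$ of $\ii^{-}\circ f$: at each new level the radii of the newly added balls are shrunk enough that their centers fall inside every lower-level ball that they meet, so that whenever $V_\alpha\cap V_\beta\neq\emptyset$ and $\ii^{-}(f(x_\alpha))\ge\ii^{-}(f(x_\beta))$ one has $x_\alpha\in V_\beta$. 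Once this nesting holds, setting $\omega_{\bar\alpha}:=x_{\alpha_{k^\ast}}$ delivers both stated properties: the nesting and (ii) give $\omega_{\bar\alpha}\in V_{\bar\alpha}$ with $m(\omega_{\bar\alpha})=\max_{k}n_{\alpha_k}$, and combined with the automatic lower bound $n_{\bar\alpha}\ge\max_{k}n_{\alpha_k}$ from (ii) this yields the required equality, while $d(\omega_{\bar\alpha},x_{\alpha_k})<\delta_{\alpha_k}$ together with (iii) forces $B(x_{\alpha_k},\delta_{\alpha_k})\subset B(\omega_{\bar\alpha},2\delta_{\alpha_k})\subset B(\omega_{\bar\alpha},\delta(\omega_{\bar\alpha}))$.
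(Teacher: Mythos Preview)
Your hierarchical plan is in the right spirit but takes a different route from the paper and has two concrete gaps. The paper does not stratify by the level sets of $\ii^{-}\circ f$; instead it takes a Nash triangulation $h:|K|\to\Omega$ respecting $\ii^{-}\circ f$ \emph{and on whose simplices $\delta$ is continuous}, then builds $\mathfrak U$ by induction on the skeleta $S_0,\dots,S_k$: at stage $i$ it covers the uncovered part $S_i'$ of the $i$-skeleton with balls of radius below a number governed by the Lebesgue number of $\mathfrak W$, a separation constant between distinct $i$-simplices, and the \emph{minimum} of $\delta$ over the compact $S_i'$. Continuity of $\delta$ on simplices is exactly what makes this last quantity positive. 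Your condition (iii), by contrast, asks for $\delta(\eta)\ge\delta(\omega)/2$ on a neighbourhood of $\omega$, which amounts to lower semicontinuity of $\delta$; but $\delta$ is only guaranteed semialgebraic and positive, and a function such as $\delta(0)=1$, $\delta(x)=|x|$ for $x\neq 0$ shows that no shrinking of $r$ yields this bound at $\omega=0$.

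The second gap is in the nesting. Your level-by-level shrinking ensures that a ball at a \emph{strictly} higher level has its center inside every lower-level ball it meets, but when $\ii^{-}(f(x_\alpha))=\ii^{-}(f(x_\beta))$ the two balls are introduced at the same stage with comparable radii, and $V_\alpha\cap V_\beta\neq\emptyset$ in no way forces $x_\alpha\in V_\beta$. Then $\omega_{\bar\alpha}=x_{\alpha_{k^\ast}}$ need not lie in $V_{\bar\alpha}$, and $n_{\bar\alpha}=\max_k n_{\alpha_k}$ can fail outright: if $\ii^{-}\circ f$ equals $m$ on two closed regions separated by a strip where it equals $m+1$, two level-$m$ balls (satisfying your (ii)) whose intersection falls entirely inside the strip give $n_\alpha=n_\beta=m$ but $n_{\alpha\beta}=m+1$. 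The triangulation approach avoids this because different $i$-simplices of the same dimension are kept separated and $\ii^{-}\circ f$ is constant on each open simplex, so the within-level combinatorics never arise in this form.
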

\begin{proof}
We first set some notations. Let $\mathcal{N}=\coprod_{i=1}^{l}N_{i}\subset \Omega$ be a finite family of disjoint smooth submanifold such that $\delta_{|\mathcal{N}}$ is continuous. For $i=1,\ldots,l$ let also $N_{i}'\subset N_{i}$ be a compact subset and define $\mathcal{N}'=\coprod N_{i}'.$\\
Then there exists $\eps(\mathcal{N},\mathcal{N}')>0$ such that for $i\neq j$ the two sets $\{x\in \Omega\, : \, d(x,N_{i}')<\eps(\mathcal{N},\mathcal{N}')\}$ and $\{x\in \Omega\, : \, d(x,N_{j}')<\eps(\mathcal{N},\mathcal{N}')\}$ are disjoint.\\
Let $\mathfrak{W}_{\mathcal{N'}}$ be the cover $\{W\cap \mathcal{N}'\, : \, W\in \mathfrak{W}\}$ and $\lambda_{\mathcal{N}'}>0$ be its Lebesgue number.\\
Finally let ${\delta'}_{\mathcal{N}'}=\min_{\eta \in \mathcal{N}'}3\delta(\eta)>0$ which exists since $\delta_{|\mathcal{N}}$ is continuos and $\mathcal{N}'$ is compact.\\
We define $\delta(\mathcal{N},\mathcal{N}')>0$ to be any number such that
$$\delta(\mathcal{N},\mathcal{N}')<\min\{\eps(\mathcal{N},\mathcal{N}'),\lambda_{\mathcal{N}'}, {\delta'}_{\mathcal{N}'}\}.$$
We construct now the desired cover. Let $h:|K|\to \Omega$ be a nash semialgebraic triangulation of $\Omega$ respecting $\ii^{-}\circ f$ and such that $\delta$ is continuous on each simplex. Thus $\Omega=\coprod S_{i},$ where $i=0,\ldots,k$ and $S_{i}$ is the image under $h$ of the $i$-th skeleton  of the complex $K.$\\
Let $S_{0}=\{x_{0},\ldots,x_{v}\}$ and define $$\mathfrak{U}_{0}\doteq\{B(x_{i},\delta(S_{0},S_{0})), i=0,\ldots,v\}$$ and $T_{0}=\cup_{i}B(x_{i},\delta(S_{0},S_{0})).$\\
Now proceed inductively: first set $S_{i}=\coprod_{\sigma_{i,j}\in K_{i}}h(\sigma_{i,j})$ and $S_{i}'=\coprod h(\sigma_{i,j})\backslash T_{i-1}.$ Then let $\mathfrak{U}_{i}=\{B(x_{i}^{j},\delta_{i})\, : \, x_{i}^{j}\in S_{i}'\, \textrm{and} \,\delta_{i}<\delta(S_{i},S_{i}')\}$ be such that $\mathfrak{U}_{i}$ and $\mathfrak{U}_{i}\cap S_{i}'$ have the same combinatorics; let also $T_{i}$ be defined by
$$T_{i}=\cup_{V\in\mathfrak{U}_{i}}V.$$
With the previous settings we finally define
  $$\mathfrak{U}\doteq \mathfrak{U}_{0}\cup\cdots\cup \mathfrak{U}_{k}.$$
Then $\mathfrak{U}$ verifies by construction the requirements and this concludes the proof.
\end{proof}

Given $f$ and $\delta$ as above, then a cover $\mathfrak{U}$ satisfying the conditions of the previous lemma will be called a $f$-regular cover.

\section{The second differential}

Suppose that a scalar product on $\R^{n+1}$ has been fixed and let $w_{1}(\L_{j}^{+})\in H^{1}(\D_{j})$ be the first Stiefel-Whitney class of $\L^{+}_{j}\to \D_{j}$ (the definition of the previous bundle depends on the fixed scalar product).\\
With the previous notations we prove the following theorem which describes the second differential for the spectral sequence of Theorem \ref{basiccoo}.

\begin{teo}\label{basicdiff} Let $\partial^{*}:H^{1}(D_{j})\to H^{2}(\Omega, D_{j})$ be the connecting homomorphism. Then for every $i,j\geq 0$ the differential $d_{2}:F_{2}^{i,j}\to F_{2}^{i+2,j-1}$is given by: $$d_{2}(x)=(x\smile \partial^{*}\bar{p}^{*}w_{1}(\L_{j}^{+}))|_{(\Omega, \Omega^{j})}.$$
\end{teo}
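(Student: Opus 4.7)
The plan is to realize the spectral sequence of Theorem~\ref{basiccoo} as a \v{C}ech double complex over a carefully chosen open cover of $\Omega$, and to read off $d_2$ by the standard zig-zag in this model. Section~3 is specifically engineered to make this computation tractable.

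First I would pass to a nondegenerate, transversal perturbation. By Lemma~\ref{lemmatrasv}, replacing $\bar p$ with $f=\bar p-\eps q_0$ for generic small $\eps>0$ yields a map transversal to every stratum of $\Q_0$, without changing the homotopy type of $(\Omega\times\P^n,B_f)$ or the cohomology class $\bar p^*w_1(\L_j^+)$. Applying Lemma~\ref{lemmacover} to $f$ produces an $f$-regular cover $\mathfrak U=\{V_\alpha\}$ whose intersections $V_{\bar\alpha}$ satisfy two key properties: first, $B_f(V_{\bar\alpha})$ is homotopy equivalent to the projective space $P^+(f(\omega_{\bar\alpha}))$ of dimension $n-n_{\bar\alpha}$ (Lemma~\ref{lemmadef}); second, $V_{\bar\alpha}\cap\Omega_{n-j}(f)$ is acyclic for every $j$ (Corollary~\ref{ac}). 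The latter ensures that the \v{C}ech complex of $\mathfrak U$ with coefficients in the Leray sheaves $\mathcal F^j$ computes $H^i(\Omega,\Omega^{j+1})$, i.e.\ the $E_2$ page.

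I would then compute $d_2$ via the usual zig-zag in the double complex. A class $x\in F_2^{i,j}$ is represented by a \v{C}ech $i$-cocycle assigning to $V_{\bar\alpha}$ (when $\ii^+(f(\omega_{\bar\alpha}))\le j$) the generator of $H^j(\P^n,P^+(f(\omega_{\bar\alpha})))\cong\Z_2$. Lifting to the total complex and applying the combined differential yields an $(i+2)$-cochain taking values in $\mathcal F^{j-1}$; on triple intersections its value records the sign ambiguity in comparing a generator of $H^{j-1}$ across the two adjacent projective fibers. This ambiguity is precisely the orientation datum of the top $j$-dimensional eigenspace bundle $\L_j^+$: the obstruction cocycle, restricted to the stratum $\Omega^j\setminus\Omega^{j+1}$, represents $\bar p^*w_1(\L_j^+)$. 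Applying $\partial^*$ converts this to a class in $H^2(\Omega,\bar p^{-1}(\D_j)\cap\Omega)$, and the multiplicative structure of the double complex identifies the zig-zag result with $(x\smile\partial^*\bar p^*w_1(\L_j^+))|_{(\Omega,\Omega^j)}$.

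The main obstacle is this last identification: unpacking the transition functions of the $P^+$-bundle over $\mathfrak U$ and verifying that the orientation mismatch of top eigenspaces at overlapping $V_\alpha,V_\beta$ yields exactly the \v{C}ech representative of $w_1(\L_j^+)$ (rather than, say, a higher Stiefel--Whitney class), together with checking the compatibility between $\partial^*$ in the pair $(\Q,\D_j)$ and the coboundary operator in the zig-zag. Once this matching is in place, the proof reduces to standard double-complex bookkeeping.
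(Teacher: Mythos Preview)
Your outline matches the paper's strategy: perturb to a nondegenerate $f_\eps$, take an $f_\eps$-regular cover, and run the zig-zag in the \v{C}ech--Leray double complex. The limits over covers and over $\eps$ (which you do not mention) are handled exactly as you would expect, and Corollary~\ref{ac} is indeed what makes the \v{C}ech model compute $E_2$.

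The genuine gap is at the step you yourself flag as the main obstacle: you speak of ``transition functions'' and ``orientation mismatch'' of $\L_j^+$, but you never say how to produce \emph{cochain-level} representatives on which to run the zig-zag. The paper does not use transition data at all. Instead it represents the generator of $H^j(V_{\bar\alpha}\times\P^n,B_{\bar\alpha}(\eps))$ by the Poincar\'e dual of the negative-eigenspace projective subspace $N(\alpha,j)=P_j^-(f_\eps(\eta_\alpha))$, which lies in the \emph{complement} of $\beta_r(B_\alpha(\eps))$ by Lemma~\ref{lemmaneg}; this is why $\L_j^-$ (and the equality $w_1(\L_j^-)=w_1(\L_j^+)$) enters, not $P^+$ directly. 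For the $d^{-1}$-step one chooses curves $c_{\alpha_0\alpha_1}$ from $\eta_{\alpha_0}$ to $\eta_{\alpha_1}$ inside $f_\eps^{-1}(\D_j)$ (possible since the complement has codimension two) and sweeps $P_j^-$ along them; the Poincar\'e dual of this swept chain is the required primitive $\theta^{1,j-1}$. On a triple $(\alpha_0\alpha_1\alpha_2)$ the three curves form a loop $\sigma:S^1\to f_\eps^{-1}(\D_j)$, and a separate fact about linear maps $P(E)\to\P^n$ from projectivized bundles over $S^1$ identifies $\delta\theta^{1,j-1}(\alpha_0\alpha_1\alpha_2)$ with $w_1(\sigma^*f_\eps^*\L_j^-)$ times the degree-$(j-1)$ generator. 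That is the precise mechanism by which $w_1$ appears; without this Poincar\'e-dual/sweep construction, the ``orientation mismatch'' heuristic does not by itself yield the formula.
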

\begin{proof}
First notice that given $x\in H^{i}(\Omega, \Omega^{j+1})$ then the product $x\smile \partial^{*}\bar{p}^{*}w_{1}(\L_{j}^{+})\in H^{i+2}(\Omega, \Omega^{j+1}\cup D_{j})$ and since $$\Omega^{j}\subset \Omega^{j+1}\cup D_{j}$$ we can consider the restriction $(x\smile\partial^{*}\bar{p}^{*}w_{1}(\L_{j}^{+}))|_{(\Omega, \Omega^{j})}\in H^{i+2}(\Omega, \Omega^{j}).$\\
We construct $(F_{r},d_{r})$ in a slightly different way than in Theorem \ref{basiccoo}, more practical for computations.\\
Let's start with a fixed scalar product $g.$ For this proof we will use in the notations for the various objects their dependence on $g.$ \\
By Lemma \ref{lemmatrasv} there exists $q_{0}>0$ such that for $\eps>0$ sufficiently small the map $f_{\eps}:\Omega\to \mathcal{Q}$ defined by
$$\omega \mapsto \omega p -\eps q_{0}$$
is nondegenerate (and also can be made transversal to $\mathcal{Q}\backslash \mathcal{D}^{g},$ where $\mathcal{D}^{g}=\cup_{j}\mathcal{D}_{j}^{g}$). Let $a:\Omega \times \P^{n}\to \R$ be the semialgebraic function defined by $(\omega, x)\mapsto (\omega p)(x)/q_{0}(x)$ (recall that we need to fix a scalar product for the definition of $a$). Then $B=\{a>0\}$ and by semialgebraicity for every $\eps>0$ sufficiently small the inclusion $$B(\eps)=\{a>\eps\}\hookrightarrow B$$ is a homotopy equivalence.\\
The proof will develop along the following idea: first we study the Leray spectral sequence $(F_{r}(\eps, \mathfrak{U}), d_{r}(\eps, \mathfrak{U}))$ of the map $(\beta_{l})_{|B(\eps)}$ with respect to $f_{\eps}$-regular cover $\mathfrak{U};$ then we perform the direct limit over the cover to get the pure Leray spectral sequence $(F_{r}(\eps), d_{r}(\eps))$ of the map $(\beta_{l})_{|B(\eps)};$ finally we perform the $\eps$-limit getting the desired spectral sequence $(F_{r},d_{r}).$\\
Thus for every $\eps>0$ let $\mathfrak{U}=\{V_{\alpha}\}_{\alpha \in A}$ be a cover of $\Omega$ regular with respect to $f_{\eps}$ and $(F_{r}(\eps, \mathfrak{U}), d_{r}(\eps, \mathfrak{U}))$ be the relative Leray spectral sequence of $(\beta_{l})_{|B(\eps)}$ with respect to the cover $\mathfrak{U}:$
$$F_{0}^{i,j}(\eps, \mathfrak{U})=\prod_{\alpha_{0}<\cdots<\alpha_{i}}C^{j}(\beta_{l}^{-1}V_{\alpha_{0}\cdots \alpha_{i}}, \beta_{l}^{-1}V_{\alpha_{0}\cdots \alpha_{i}}\cap B(\eps); \Z_{2}) $$
Let also $K_{0}^{*,*}=K_{0}^{*,*}(\mathfrak{U})$ be the Kunneth bicomplex associated to the map $\beta_{l}:\Omega\times \P^{n}\to \Omega$ with respect to $\mathfrak{U}.$ Notice that $F_{0}^{*,*}(\eps, \mathfrak{U})$ is a subcomplex of $K_{0}^{*,*}$ and we denote by $\delta_{F}, d_{F}$ and $\delta_{K}, d_{K}$ the respective bicomplex differentials (the first two are the restriction to $F_{0}^{*,*}$ of the second two).\\
For every $\omega \in \Omega$ and $\eps>0$ we let $\ii^{-}(\eps)(\omega)=\textrm{ind}^{-}(\omega p -\eps q_{0})$ and for every multi-index $\bar\alpha=(\alpha_{0},\ldots, \alpha_{i})$ such that $V_{\bar\alpha}\neq \emptyset$ we let $n_{\bar\alpha}$ be the minimum of $\ii^{-}(\eps)$ over $V_{\bar\alpha}.$ We take an order on the index set $A$ such that $$\alpha\leq \beta \Rightarrow n_{\alpha}\leq n_{\beta}.$$ In this way, by Lemma \ref{lemmacover}, for every multi-index $\bar\alpha=(\alpha_{0},\ldots, \alpha_{i})$ such that $V_{\bar\alpha}\neq \emptyset$ we have that  $n_{\bar\alpha}=n_{\alpha_{i}}.$
For every multi-index $\bar\alpha$ such that $V_{\bar \alpha}\neq \emptyset$ let $\omega_{\bar\alpha}$ be given by Lemma \ref{lemmacover}, $\ii^{-}(\eps)(\omega_{\bar\alpha})=n_{\bar\alpha},$ and we let $\eta_{\bar\alpha}\in V_{\bar\alpha}$ be such that $\det(f_{\eps}(\eta_{\bar\alpha}))\neq 0,\, \ii^{-}(\eps)(\eta_{\bar\alpha})=n_{\bar\alpha}$ and $f_{\eps}(\eta_{\bar\alpha})\in \mathcal{D}^{g}$ (such $\eta_{\bar\alpha}$ always exists, and by transversality of the map $f_{\eps}$ to $\mathcal{Q}_{0}$ and to $\mathcal{Q}\backslash \mathcal{D}^{g}$, which have respectively codimension one and two, there are plenty of them).\\
For every $0\leq j\leq n$ and $\alpha \in A$ we define
$$N(\alpha, j)=(P_{j}^{-})^{g}(f_{\eps}(\eta_{\alpha}))$$
where the $g$ on $(P_{j}^{-})^{g}$ denotes the dependence on the fixed scalar product.
Moreover we let $\nu(\alpha, j)\in C^{j}(\P^{n})$ be the Poincaré dual of $N(\alpha, j)$ and we define a cochain $\psi^{0,j}\in K_{0}^{0,j}$ by
$$\psi^{0,j}(\alpha)=\beta_{r}^{*}\nu(\alpha, j).$$
Notice that if $n-n_{\alpha}+1\leq j\leq n$ then , by Lemma \ref{lemmaneg}, $N(\alpha, j)\subset \P^{n}\backslash \beta_{r}(B_{\alpha}(\eps))$ and thus $\nu(\alpha, j)\in C^{j}(\P^{n},\beta_{r}(B_{\alpha}(\eps)).$ Hence
\begin{equation}\label{eq1}n-n_{\alpha}+1\leq j\leq n \Rightarrow \psi^{0,j}(\alpha)\in C^{j}(V_{\alpha}\times \P^{n},B_{\alpha}(\eps))\end{equation}
Moreover $N(\alpha, n-n_{\alpha}+1)$ is a $(n_{\alpha}-1)$-dimensional projective space contained in $\P^{n}\backslash \beta_{r}(B_{\alpha_{0}\ldots\alpha_{i}\alpha}(\eps))$ for every $(\alpha_{0},\ldots,\alpha_{i});$ thus by Lemma \ref{lemmadef} if $n-n_{\alpha}+1\leq j\leq n$ then the cohomology class of $\nu(\alpha, j)$ generates $H^{j}(\P^{n}, \beta_{r}(B_{\alpha}(\eps))).$ Hence it follows that for every $\overline{\alpha}=(\alpha_{0}\cdots\alpha_{i}\alpha)$ such that $V_{\overline{\alpha}}\neq \emptyset$
\begin{equation}\label{eq2} n-n_{\alpha}+1\leq j\leq n\Rightarrow [\psi^{0,j}(\alpha)_{|\overline{\alpha}}]\, \textrm{ generates }\, H^{j}(V_{\overline{\alpha}}\times \P^{n},B_{\overline{\alpha}}(\eps))=\Z_{2}\end{equation}
For every $\alpha_{0},\alpha_{1}\in A$ such that $V_{\alpha_{0}\alpha_{1}}\neq \emptyset$ we consider a curve $c_{\alpha_{0}\alpha_{1}}:I\to V_{\alpha_{0}}\cup V_{\alpha_{1}}$ such that $c_{\alpha_{0}\alpha_{1}}(i)=\eta_{\alpha_{i}},\, i=0,1;$ since $\Omega\backslash f_{\eps}^{-1}(\mathcal{D}^{g})$ has codimension two in $\Omega,$ then we may choose $c_{\alpha_{0}\alpha_{1}}$ such that for every $t\in I$ we have $f_{\eps}(c_{\alpha_{0}\alpha_{1}}(t))\in\mathcal{D}^{g}.$ Consider the $\R^{n-j+1}$-bundle $L_{j}^{g}(\alpha_{0}\alpha_{1})=c_{\alpha_{0}\alpha_{1}}^{*}f_{\eps}^{*}(\mathcal{L}_{j}^{-})^{g}$ over $I$ and its projectivization $P(L_{j}^{g}(\alpha_{0}\alpha_{1})).$ Then the natural map
$$P(L_{j}^{g}(\alpha_{0}\alpha_{1}))\to\P^{n}$$ defines a $(n-j+1)$-chain $T(\alpha_{0}\alpha_{1},j-1)$ in $\P^{n}.$ Let $\tau(\alpha_{0}\alpha_{1},j-1)\in C^{j-1}(\P^{n})$ be its Poincaré dual and define $\theta^{1,j-1}\in K_{0}^{1,j-1}$ by setting for every $\alpha_{0},\alpha_{1}$ with $V_{\alpha_{0}\alpha_{1}}\neq \emptyset$
$$\theta^{1,j-1}(\alpha_{0}\alpha_{1})=\beta_{r}^{*}\tau(\alpha_{0}\alpha_{1}, j-1).$$
Notice that $\partial T(\alpha_{0}\alpha_{1}, j-1)=N(\alpha_{0},j)+N(\alpha_{1},j),$ hence $d\tau(\alpha_{0}\alpha_{1},j-1)=\nu(\alpha_{0},j)+\nu(\alpha_{1}, j);$ it follows that
\begin{equation}\label{eq3} \delta_{K}\psi^{0,j}=d_{K}\theta^{1,j-1}.\end{equation}
Moreover by construction if $n-n_{\alpha_{0}}+1\leq j\leq n$ and $n-n_{\alpha_{1}}+1\leq j\leq n,$ which by the previous computations implies $n-n_{\alpha_{0}\alpha_{1}}+1\leq j\leq n,$ then
\begin{equation}\label{eq4} \theta^{1,j-1}(\alpha_{0}\alpha_{1})\in C^{j-1}(V_{\alpha_{0}\alpha_{1}}\times \P^{n},B_{\alpha_{0}\alpha_{1}}(\eps)).\end{equation}
We compute now $\delta_{K}\theta^{1,j-1};$ first we recall the following fact, which is a direct consequence of the definition of Stiefel-Whitney classes.

\begin{fact}Let $\pi:E\to S^{1}$ a $\R^{k+1}$ fiber bundle and $P(\pi):P(E)\to S^{1}$ its projectivization. Moreover let $L:P(E)\to \P^{m},\, m>k$ be a linear map, $c\in H^{k}(P(E))$ such that for every $y \in S^{1}$ the class $c_{|P(E_{y})}$ generates $H^{k}(P(E_{y}))$ and $b\in H^{k+1}(\P^{m})$ be the generator. Then, writing $w_{1}(E)$ for the first Stiefel-Whitney class of $E$, the following holds: $$L^{*}b=P(\pi)^{*}w_{1}(E)\smile c.$$
\end{fact}

Let now $(\alpha_{0}\alpha_{1}\alpha_{2})=\bar\alpha$ such that $V_{\bar\alpha}\neq \emptyset.$ Then the curves $c_{\alpha_{0}\alpha_{1}}, c_{\alpha_{1}\alpha_{2}}$ and $c_{\alpha_{2}\alpha_{0}}$ define a map $\sigma_{\alpha_{0}\alpha_{1}\alpha_{2}}:S^{1}\to \Omega$ and we have the bundle $L_{j}^{g}(\alpha_{0}\alpha_{1}\alpha_{2})=\sigma_{\alpha_{0}\alpha_{1}\alpha_{2}}^{*}f_{\eps}^{*}(\mathcal{L}_{j}^{-})^{g}$ and its projectivization $P(L_{j}^{g}(\alpha_{0}\alpha_{1}\alpha_{2}))$ over $S^{1}.$ The natural map
$$P(L_{j}^{g}(\alpha_{0}\alpha_{1}\alpha_{2}))\to \P^{n}$$ defines a $(n-j+1)$-cochain which by construction equals $\delta_{K}\theta^{1,j-1}(\alpha_{0}\alpha_{1}\alpha_{2}).$ Thus by Fact 1 we have:
\begin{equation}\label{eq5}\delta_{K}\theta^{1,j-1}(\alpha_{0}\alpha_{1}\alpha_{2})=w_{1}(\partial(\alpha_{0}\alpha_{1}\alpha_{2}))(\psi^{0,j-1}(\alpha_{2})_{|\alpha_{0}\alpha_{1}\alpha_{2}})+dr^{2,j-1}(\alpha_{0}\alpha_{1}\alpha_{2}) \end{equation}
where $w_{1}(\partial(\alpha_{0}\alpha_{1}\alpha_{2}))=w_{1}(L_{j}^{g}(\alpha_{0}\alpha_{1}\alpha_{2})).$
Let now $\xi^{i}\in F_{1}^{i,j}(\eps, \mathfrak{U});$ we define $\xi^{i,0}\in K_{0}^{i,0}$ by $$\xi^{i,0}(\alpha_{0}\ldots\alpha_{i})\equiv \xi^{i}(\alpha_{0}\ldots \alpha_{i})$$ i.e. the values of $\xi^{i,0}(\alpha_{0}\ldots \alpha_{i})$ on every $0$-chain equals $\xi^{i}(\alpha_{0}\ldots \alpha_{i})\in \Z_{2}.$ Notice that by construction $d_{K}\xi^{i,0}=0$ and that
\begin{equation}\label{eq6} d_{1}\xi^{i}=0\,\,\Rightarrow\,\, \delta_{K}\xi^{i,0}=0.\end{equation}
Pick now $x\in F_{2}^{i,j}(\eps, \mathfrak{U})$ and $\xi^{i}$ such that $x=[\xi^{i}]_{\check{\delta}};$ consider the cochain $\xi^{i,0}\cdot \psi^{0,j}\in K_{0}^{i,j}.$ Since $\xi^{i}\in F_{1}^{i,j}(\eps, \mathfrak{U}),$ then (\ref{eq1}) implies
$$\xi^{i,0}\cdot \psi^{0,j}\in F_{0}^{i,j}(\eps, \mathfrak{U}).$$
Moreover by (\ref{eq2}) it follows that $[\xi^{i,0}\cdots \psi^{0,j}]_{1}=\xi^{i}$ and thus
$$[\xi^{i,0}\cdot \psi^{0,j}]_{2}=x.$$
We calculate now: \begin{align*}\delta_{F}(\xi^{i,0}\cdot \psi^{0,j})&=\delta_{K}(\xi^{i,0}\cdot \psi^{0,j})=\xi^{i,0}\cdot \delta_{K}\psi^{0,j}=\xi^{i,0}\cdot d_{K}\theta^{1,j-1}\\ &=d_{K}(\xi^{i,0}\cdot \theta^{1,j-1})=d_{F}(\xi^{i,0}\cdot \theta^{1,j-1}).\end{align*}
The first equality comes from $F_{0}^{i,j}(\eps, \mathfrak{U})\subset K_{0}^{i,j};$ the second from $\check{\delta}\xi^{i}=0;$ the third from (\ref{eq3}); the fourth from (\ref{eq6}); the last by $\xi^{i,0}\cdot\theta^{1,j-1}\in F_{0}^{i+1,j-1}(\eps, \mathfrak{U}),$ which is a direct consequence of (\ref{eq4}).\\
We finally compute $d_{2}(\eps, \mathfrak{U})(x)=[\delta_{F}(\xi^{i,0}\cdot \theta^{1,j-1})]_{2}:$
$$\delta_{F}(\xi^{i,0}\cdot \theta^{1,j-1})=\delta_{K}(\xi^{i,0}\cdot \theta^{1,j-1})=\xi^{i,0}\cdot \delta_{K}\theta^{1, j-1}$$
and thus by (\ref{eq5}) we have
$$[\delta_{F}(\xi^{i,0}\cdot \theta^{1,j-1})]_{1}(\alpha_{0}\cdots\alpha_{i+2})=\xi^{i}(\alpha_{0}\cdots\alpha_{i})w_{1}(\partial (\alpha_{i}\alpha_{i+1}\alpha_{i+2})).$$
Now we define $(F_{r}(\eps), d_{r}(\eps))$ to be the pure Leray spectral sequence of the map $(\beta_{l})_{|B(\eps)},$ which by definition is
$$(F_{r}(\eps),d_{r}(\eps))=\varinjlim_{\mathfrak{W}}\{(F_{r}(\eps, \mathfrak{W}), d_{r}(\eps, \mathfrak{W}))\}$$ where the direct limit is taken over all open covers of $\Omega$ directed by refinement. By Lemma \ref{lemmacover} the previous direct limit can be computed over $f_{\eps}$-regular covers; moreover by Corollary \ref{ac} for a $f_{\eps}$- regular cover $\mathfrak{U}$ the limit map gives natural isomorphism $$F_{2}^{i,j}(\eps, \mathfrak{U})\simeq \varinjlim_{\mathfrak{U}}\{F_{2}^{i,j}(\eps, \mathfrak{U})\}$$ Now Lemma \ref{lemmadef} implies the third of the following equalities:
$$F_{2}^{i,j}(\eps)=\varinjlim_{\mathfrak{W}}\{F_{2}^{i,j}(\eps, \mathfrak{W})\}=\varinjlim_{\mathfrak{U}}\{F_{2}^{i,j}(\eps, \mathfrak{U})\}=\check{H}^{i}(\Omega, \Omega_{n-j}(\eps);\Z_{2})$$
(we stress that the previous limits are attained at $f_{\eps}$-regular covers).\\
Then by definition of Cech cohomology class and of the connecting homomorphism we get that the differential $d_{2}(\eps):F_{2}^{i,j}(\eps)\to F^{i+2,j-1}(\eps)$ is given by
$$d_{2}(\eps)(x)=(x\smile f_{\eps}^{*}\partial^{*}w_{1}(\L_{j}^+)^{g})|_{(\Omega, \Omega_{n-j+1}(\eps))}$$ (here we are using the fact that $w_{1}(\L_{j}^{+})=w_{1}(\L_{j}^{-})$).\\
Consider now, for $\eps>0,$ the complex $(F_{0}(\eps), D_{\eps}=d+\delta)$.  Then for $\eps_{1}<\eps_{2}$ the inclusion $B(\eps_{2})\hookrightarrow B(\eps_{1})$ defines a morphism of filtered differential graded modules $i_{0}(\eps_{1},\eps_{2}):(F_{0}(\eps_{1}), D(\eps_{1}))\to (F_{0}(\eps_{2}), D(\eps_{2}))$ turning $\{(F_{0}(\eps), D(\eps))\}_{\eps>0}$ into an inverse system and thus $\{(F_{r}(\eps), d_{r}(\eps))\}_{\eps>0}$ into an inverse system of spectral sequences. We set
$$(F_{r},d_{r})\doteq \varprojlim_{\eps}\{(F_{r}(\eps), d_{r}(\eps))\}.$$
We examine $i_{2}(\eps_{1},\eps_{2}):F_{2}^{i,j}(\eps_{1})\to F_{2}^{i,j}(\eps_{2});$ it is readily verified that for $i,j\geq0$ the map $i_{2}(\eps_{1},\eps_{2})_{i,j}:F_{2}^{i,j}(\eps_{1})\to F_{2}^{i,j}(\eps_{2})$ equals the map
$$i^{*}(\eps_{1},\eps_{2}):\check{H}^{i}(\Omega, \Omega_{n-j}(\eps_{1}))\to \check{H}^{i}(\Omega, \Omega_{n-j}(\eps_{2}))$$ given by the inclusion of pairs $(\Omega, \Omega_{n-j}(\eps_{2}))\hookrightarrow (\Omega, \Omega_{n-j}(\eps_{1})).$ By semialgebraicity $i^{*}(\eps_{1},\eps_{2})$ is definitely an isomorphism, hence $i_{2}(\eps_{1},\eps_{2})$ is definitely an isomorphism and thus $i_{\infty}(\eps_{1}, \eps_{2})$ and $i_{0}^{*}(\eps_{1},\eps_{2}):H_{D}^{*}(F_{0}(\eps_{1}))\to H_{D}^{*}(F_{0}(\eps_{2}))$ are definitely isomorphisms. Thus $(F_{r},d_{r})$ converges to $\lim_{\eps}H^{*}(\Omega\times \P^{n}, B(\eps)).$ Again by semialgebraicity the inclusions $(\Omega \times \P^{n},B(\eps_{2}))\hookrightarrow (\Omega \times \P^{n}, B(\eps_{1}))$ are definitely homotopy equivalences and since for small $\eps>0$ the inclusion $B(\eps)\hookrightarrow B$ is a homotopy equivalence too, then we have that $(F_{r},d_{r})$ converges to
$$\varprojlim_{\eps}\{H^{*}(\Omega \times \P^{n}, B(\eps))\}=H^{*}(\Omega\times \P^{n},B).$$
It remains to identify $(F_{2},d_{2}).$ We already proved, in Theorem \ref{basiccoo} that $$\varinjlim_{\eps}\{H_{*}(\Omega, \Omega_{n-j}(\eps))\}=H_{*}(\Omega, \Omega^{j+1}).$$ Then, as before, the following chain of isomorphisms $\varprojlim\{H^{i}(\Omega, \Omega_{n-j}(\eps);\Z_{2})\}\simeq(\varinjlim\{H_{i}(\Omega, \Omega_{n-j}(\eps);\Z_{2})\})^{*}=(H_{i}(\Omega, \Omega^{j+1};\Z_{2}))^{*}$ gives $$F_{2}^{i,j}=H^{i}(\Omega, \Omega^{j+1};\Z_{2}).$$
It is easy to see that this spectral sequence and that one constructed in Theorem \ref{basiccoo} are isomorphic.\\
We define $\Gamma_{1,j}\in H^{2}(\Q,\D_{j})$ by
$$\Gamma_{1,j}^{g}\doteq \partial^{*}w_{1}(\L_{j}^{+})^{g}.$$
Consider now the following sequences of maps:
$$H^{2}(\mathcal{Q}, \mathcal{D}^{g}_{j})\stackrel{f_{\eps}^{*}}{\longrightarrow}H^{2}(\Omega, D^{g}_{j}(\eps))\stackrel{r_{\eps}^{*}}{\longrightarrow}H^{2}(\Omega,\Omega_{n-j+1}(\eps)\backslash \Omega_{n-j}(\eps)).$$
Notice that $r_{\eps}^{*}f_{\eps}^{*}\Gamma_{1,j}^{g}$ does not depend on $g$ and thus the differential $d_{2}(\eps)$ is given by $$x\mapsto (x\smile f_{\eps}^{*}\Gamma_{1,j}^{g})|_{(\Omega, \Omega_{n-j+1}(\eps))}$$ for \emph{any} $g.$
Let now $g=q_{0};$ then in this case $D_{j}^{q_{0}}=D_{j}^{q_{0}}(\eps)$ and $f^{*}=f^{*}_{\eps}.$ Consider the following commutative diagram of inclusions:
$$\begin{tikzpicture}[xscale=4.5, yscale=2]

    \node (A0_0) at (0, 0) {$(\Omega, \Omega_{n-j+1}(\eps))$};
    \node (A1_0) at (1, 0) {$(\Omega, \Omega_{n-j}(\eps)\cup D_{j}^{q_{0}})$};
    \node (A0_1) at (0, 1) {$(\Omega, \Omega^{j})$};
    \node (A1_1) at (1, 1) {$(\Omega, \Omega^{j+1}\cup D_{j}^{q_{0}})$};
    \path (A0_0) edge [->] node [auto] {$j(\eps)$} (A1_0);
    \path (A0_0) edge [->] node [auto,swap ] {$\rho(\eps)$} (A0_1);
    \path (A0_1) edge [->] node [auto] {$j$} (A1_1);
    \path (A1_0) edge [->] node [auto] {$\hat{\rho}(\eps)$} (A1_1);
      \end{tikzpicture}
$$

Then, using $\rho(\eps)$ also for the inclusion $(\Omega, \Omega_{n-j}(\eps))\hookrightarrow (\Omega, \Omega^{j+1})$ and setting $\gamma_{1,j}=f^{*}\Gamma_{1,j}^{q_{0}},$ we have for $x\in H^{i}(\Omega, \Omega^{j+1})$ the following chain of equalities:

\begin{align*}\rho(\eps)^{*}((x\smile \gamma_{1,j})|_{(\Omega, \Omega^{j})})&=\rho(\eps)^{*}j^{*}(x\smile f^{*}\Gamma_{1,j}^{q_{0}})=j(\eps)^{*}\hat{\rho}(\eps)^{*}(x\smile f^{*}\Gamma_{1,j}^{q_{0}})\\
&=j(\eps)^{*}(\rho(\eps)^{*}x\smile f_{\eps}^{*}\Gamma_{1,j}^{q_{0}})=d_{2}(\eps)(\rho(\eps)^{*}x).
\end{align*}

This proves that the following diagram is commutative:

$$\begin{tikzpicture}[xscale=4.5, yscale=2]

    \node (A0_0) at (0, 0) {$H^{i}(\Omega, \Omega_{n-j}(\eps))$};
    \node (A1_0) at (1, 0) {$H^{i+2}(\Omega, \Omega_{n-j+1}(\eps))$};
    \node (A0_1) at (0, 1) {$H^{i}(\Omega, \Omega^{j+1})$};
    \node (A1_1) at (1, 1) {$H^{i+2}(\Omega, \Omega^{j})$};
    \path (A0_0) edge [->] node [auto] {$d_{2}(\eps)$} (A1_0);
    \path (A0_1) edge [->] node [auto,swap ] {$\rho(\eps)^{*}$} (A0_0);
    \path (A0_1) edge [->] node [auto] {$(\cdot \smile \gamma_{j})|_{(\Omega,\Omega^{j})}$} (A1_1);
    \path (A1_1) edge [->] node [auto] {$\rho(\eps)^{*}$} (A1_0);
      \end{tikzpicture}
$$

From this the conlusion follows.
\end{proof}

We are now ready to prove the statement concerning the second differential of the spectral sequence of Theorem A.\\
First we fix a scalar product (i.e. a positive definite form) $q_{0}.$\\
We let $\partial^{*}:H^{1}(D_{j})\to H^{2}(C\Omega, D_{j})$ be the connecting homomorphism  and we define $\gamma_{1,j}\in H^{2}(C\Omega, D^{j})$ by
$$\gamma_{1,j}=\partial^{*}\bar{p}^{*}w_{1}(\L_{j}^{+})$$
(notice that all the previous objects are those associated to $q_{0}$ and that $\gamma_{1,j}=\bar{p}^{*}\phi_{j}$ as defined in the Introduction).
\begin{thmb} \label{final} For every $i,j\geq 0$ the differential $d_{2}:E_{2}^{i,j}\to E_{2}^{i+2,j-1}$is given by: $$d_{2}(x)=(x\smile \gamma_{1,j})|_{(C\Omega, \Omega^{j})}.$$

\begin{proof}
As before we replace now $K$ with $\hat{K}=(-\infty, 0]\times K,$ the map $p$ with the map $\hat{p}=(q_{0},p):\R^{n+1}\to \R^{k+2}$ and we apply the previoius Theorem to $(\hat{p}, \hat{K})$. As before we use the deformation retraction $(\hat{\Omega},\hat{\Omega}^{j+1})\to (\hat{\Omega}, \Omega^{j+1})=(C\Omega, \Omega^{j+1}).$  Notice that we have also the deformation retraction $$r:(\hat{\Omega}, \hat{D}_{j})\to(\hat{\Omega},D_{j})$$ where $D_{j}$ is identified with $\hat{D}_{j}\cap\{\eta=0\}:$  by definition $\omega\in D_{j}$ if and only if $(\eta,\omega)\in \hat{D}_{j}$ and for every $0<j<n+1$ we have $(1,0,\ldots,0)\notin D_{j}$ since all the eigenvalues of $\langle(1,0,\ldots,0), \hat{p}\rangle =-q_{0}$ with respect to $q_{0}$ coincide. Then by construction
$$r^{*}\gamma_{1,j}= \partial^{*}\bar{p}^{*}w_{1}(\L_{j}^{+})$$
and by naturality the conclusion follows.

\end{proof}

\end{thmb}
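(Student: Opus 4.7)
The plan is to reduce Theorem~B to the analogous formula already proved for the auxiliary spectral sequence $(F_r,d_r)$ in Theorem~\ref{basicdiff}, by exploiting the same ``hat'' construction that was used in the proof of Theorem~A to pass from $(F_r,d_r)$ to $(E_r,d_r)$.

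First I would apply Theorem~\ref{basicdiff} directly to the augmented data $(\hat p,\hat K)$, where $\hat K=(-\infty,0]\times K$ and $\hat p=(-q_0,p)$. Since $\hat p^{-1}(\hat K)=X$, and since the proof of Theorem~A identifies the resulting $E_2$-term $H^i(\hat\Omega,\hat\Omega^{j+1})$ with $H^i(C\Omega,\Omega^{j+1})$ along the meridian deformation retraction $r$, Theorem~\ref{basicdiff} immediately yields a formula for the differential of the augmented spectral sequence $(\hat E_r,\hat d_r)$ of the form
$$
\hat d_2(x) = \bigl(x \smile \partial^{*}\hat{\bar p}^{*}w_1(\L_j^+)\bigr)\bigr|_{(\hat\Omega,\hat\Omega^{j})}.
$$

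The next step is to descend this identity along $r$. For this I need a companion retraction $(\hat\Omega,\hat D_j)\to(\hat\Omega,D_j)$ compatible with the meridian retraction already defined on $\hat\Omega^{j+1}$, so that both the class $\hat{\bar p}^{*}w_1(\L_j^+)$ and the pair housing the connecting homomorphism are transported correctly. The key geometric check is that the north pole $(1,0,\dots,0)\in S^{k+1}$, i.e.\ the only point of $\hat\Omega$ deleted by the meridian collapse, avoids $\hat D_j$ for every $0<j<n+1$: at that point $\hat{\bar p}$ equals $-q_0$, all of whose eigenvalues with respect to $q_0$ coincide, so no consecutive pair $\lambda_j,\lambda_{j+1}$ is separated. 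With this verified, naturality of the cup product and of the connecting homomorphism in the long exact sequence of a pair gives
$$
r^{*}\gamma_{1,j} = \partial^{*}\hat{\bar p}^{*}w_1(\L_j^+),
$$
and pulling back the formula for $\hat d_2$ along the isomorphism $r^{*}$ delivers the statement of Theorem~B.

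I expect the hard part to lie not in any single computation but in the bookkeeping needed to confirm that the isomorphism $(E_r,d_r)\simeq(\hat E_r,\hat d_r)$ supplied by the proof of Theorem~A is in fact induced by the retraction $r$ at the $E_2$-page, so that the naturality square for $d_2$ commutes on the nose. Once this compatibility is nailed down, Theorem~B follows by a direct diagram chase, with no further geometric input required beyond what is already in Theorem~\ref{basicdiff} and in the construction of $(E_r,d_r)$.
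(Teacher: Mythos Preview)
Your proposal is correct and follows essentially the same route as the paper: apply Theorem~\ref{basicdiff} to the hat construction $(\hat p,\hat K)$, then transport the resulting formula for $\hat d_2$ back to $(C\Omega,\Omega^{j+1})$ via the meridian retraction, using the companion retraction $(\hat\Omega,\hat D_j)\to(\hat\Omega,D_j)$ and the observation that the north pole lies outside $\hat D_j$ because all eigenvalues of $-q_0$ with respect to $q_0$ coincide. Your explicit flagging of the compatibility check between $r$ and the spectral sequence isomorphism is a reasonable caution, but the paper treats this as immediate from naturality and does not elaborate further.
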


\section{Some remarks on spectral sequences}

Here we make some remarks which will be useful in the sequel. We always make use of $\Z_{2}$ coefficients, in order to avoid  sign problems; the following results still hold for $\Z$ coefficients, but sign must be put appropriately. We begin with the following.

\begin{lemma}\label{contraction}Let $(C_{*},\partial_{*})$ be an acyclic free chain complex and $(D_{*},\partial_{*}^{D})$ be an acyclic subcomplex. Then there exists a chain homotopy $$K_{*}:C_{*}\to C_{*+1}$$ such that $\partial_{*+1} K_{*}+K_{*-1}\partial_{*} =I_{*}$ and $K_{*}(D_{*})\subset(D_{*+1}).$
\end{lemma}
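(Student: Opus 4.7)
The plan is to construct $K_*$ by induction on degree, using the freeness of $C_*$ (together with the fact that we work over $\Z_2$, so that $D_n$ is automatically a direct summand of $C_n$) to split the basis elements into two classes: those lying in $D_*$, for which we will select lifts inside $D_{*+1}$ by acyclicity of $D_*$, and a complementary family in an abstract complement $E_n$ with $C_n=D_n\oplus E_n$, for which we may select lifts freely in $C_{*+1}$ by acyclicity of $C_*$. I may assume the complexes are bounded below (shifting degrees if necessary), since the spectral sequences considered in the paper arise in this setting.

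For the base case in the lowest degree $n_0$ (below which everything vanishes), acyclicity gives that $\partial_{n_0+1}\colon C_{n_0+1}\to C_{n_0}$ and $\partial^D_{n_0+1}\colon D_{n_0+1}\to D_{n_0}$ are surjective. For each basis element $c$ of $D_{n_0}$ choose $K_{n_0}(c)\in D_{n_0+1}$ with $\partial K_{n_0}(c)=c$, and for each basis element $c$ of $E_{n_0}$ choose $K_{n_0}(c)\in C_{n_0+1}$ with the same property; extend linearly. For the inductive step, assume $K_{n-1}$ is defined and satisfies both $\partial_n K_{n-1}+K_{n-2}\partial_{n-1}=I_{n-1}$ and $K_{n-1}(D_{n-1})\subset D_n$. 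For each basis element $c$ of $C_n$, the element
\[
z(c)\;=\;c-K_{n-1}(\partial_n c)\in C_n
\]
is a cycle, since $\partial_n z(c)=\partial_n c-\partial_n K_{n-1}\partial_n c=\partial_n c-\bigl(\partial_n c-K_{n-2}\partial_{n-1}\partial_n c\bigr)=0$ by the induction hypothesis. If $c\in D_n$ then $\partial_n c\in D_{n-1}$ and $K_{n-1}(\partial_n c)\in D_n$, so $z(c)\in D_n$; acyclicity of $D_*$ produces $K_n(c)\in D_{n+1}$ with $\partial_{n+1}K_n(c)=z(c)$. If $c\in E_n$, acyclicity of $C_*$ produces $K_n(c)\in C_{n+1}$ with the same property. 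Extending linearly gives $K_n$ with the required two properties.

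The only subtlety is the existence of the direct-sum decomposition $C_n=D_n\oplus E_n$ needed to split the basis, and this is immediate because $\Z_2$ is a field and $D_n$ is a subspace of the vector space $C_n$. If one wanted to handle unbounded complexes, one would replace the bounded-below induction with a transfinite/inverse-limit argument, but this is unnecessary for the applications in the paper. Thus no genuine obstruction arises; the construction is purely formal, exploiting acyclicity of both $D_*$ and $C_*$ separately at each inductive step so that the contraction automatically preserves the subcomplex.
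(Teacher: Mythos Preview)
Your proof is correct and shares the core idea with the paper's: construct the contraction on $D_*$ first using its acyclicity, then extend to $C_*$ using freeness over $\Z_2$ and acyclicity of $C_*$. The paper packages this a bit differently: instead of inducting on degree, it chooses for each $q$ a right inverse $s_{q-1}^D$ of $\partial_q^D\colon D_q\to Z_{q-1}^D$ (possible because over $\Z_2$ every submodule is free), extends it to a right inverse $s_{q-1}$ of $\partial_q\colon C_q\to Z_{q-1}$, and then sets $K_q=s_q(I_q-s_{q-1}\partial_q)$. This closed formula is non-recursive (each $K_q$ depends only on $s_q$ and $s_{q-1}$), so no bounded-below hypothesis is needed; your inductive version, by contrast, makes the lifting step very explicit but requires the bounded-below assumption you flagged. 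Both arguments are standard and equivalent in content.
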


\begin{proof} By taking a right inverse $s_{q-1}^{D}$ of $\partial_{q}^{D}$, 
which exists since $D_{q-1}$ and hence $Z_{q-1}^{D}$ are free, a chain contraction $K_{q}^{D}$ for $D$ is defined by: $K_{q}^{D}=s_{q}^{D}(I_{q}-s_{q-1}^{D}\partial_{q}^{D})$. Since $Z_{q}$ is free, then it is possible to extend $s_{q-1}^{D}$ to a right inverse $s_{q-1}$ of $\partial_{q}$: $$s_{q-1}:Z_{q-1}=B_{q-1}\to C_{q}.$$ Then by setting $$K_{q}=s_{q}(I_{q}-s_{q-1}\partial_{q})$$ we obtain a chain contraction for the complex $(C_{*},\partial_{*})$ which restricts to a chain contraction for the subcomplex $(D_{*},\partial_{*}^{D}).$
\end{proof}

Let now $X$ be a topological space and $Y$ be a subspace. If we consider an open cover $\mathfrak{U}=\{V_{\alpha}\}_{\alpha \in A}$ for $X,$ then the Mayer-Vietoris bicomplex $E(Y)^{*,*}_{0}$ for the pair $(X,Y)$ relative to the cover $\mathfrak{U}$ is defined by $$E(Y)_{0}^{p,q}=\check{C}^{p}(\mathfrak{U},\mathfrak{U}\cap Y; C^{q})=\prod_{\alpha_{0}<\cdots<\alpha_{p}}C^{q}(V_{\alpha_{0}\cdots \alpha_{p}}, V_{\alpha_{0}\cdots \alpha_{p}}\cap Y)$$ with the horizontal $\delta$ and the vertical $d$ defined as the usual ones. By the Mayer-Vietoris principle, which of course extends to the case of a subspace pair, each row of the augmented chain complex of $E(Y)_{0}^{*,*}$ is exact, i.e. for each $q\geq0$ the chain complex
$$0\to C_{\mathfrak{U}}^{q}(X,Y)\to\check{C}^{0}(\mathfrak{U}, \mathfrak{U}\cap Y, C^{q})\to \cdots$$ is acyclic - we recall that $(C_{\mathfrak{U}}^{*}(X,Y),d)$ is defined to be the complex of $\mathfrak{U}$-small singular cochains and that the following isomorphism holds: $$H_{d}(C^{*}_{\mathfrak{U}}(X,Y))\simeq H^{*}(X,Y).$$
From this it follows that the spectral sequence associated to $E(Y)_{0}^{*,*}$ converges to $$H^{*}(X,Y)\simeq H^{*}_{D}(E(Y)_{0}^{*,*})$$ where $H^{*}_{D}(E(Y)_{0}^{*,*})$ is the cohomology of the complex $E(Y)_{0}^{*,*}$ with differential $D=d+\delta.$ We also recall that $$r^{*}:C^{*}_{\mathfrak{U}}(X,Y)\to \check{C}^{*}(\mathfrak{U},\mathfrak{U}\cap Y, C^{*})$$ induces isomorphisms on cohomologies; if we take a chain contraction $K$ for the Mayer-Vietoris rows of the pair $(X,Y),$ then we can define a homotopy inverse $f$ to $r^{*}$ by the following procedure. If $c=\sum_{i=0}^{n}c_{i}$ and $Dc=\sum_{i=0}^{n+1}b_{i}$ then we set $$f(c)=\sum_{i=0}^{n}(dK)^{i}c_{i}+\sum_{i=0}^{n+1}K(dK)^{i-1}b_{i}.$$
The reader can see \cite{Bott} for more details.\\
If we let $Z\subset Y$ be a subspace, then $E(Y)_{0}^{*,*}$ is naturally included in the Mayer-Vietoris bicomplex $E(Z)_{0}^{*,*}$ for the pair $(X,Z)$ relative to the cover $\mathfrak{U}:$ $$i_{0}:E(Y)_{0}^{*,*}\hookrightarrow E(Z)_{0}^{*,*}.$$
Since $i_{0}$ obviously commutes with the total differentials, then it induces a morphism of spectral sequence, and thus a map $$i_{0}^{*}:H^{*}_{D}(E(Y)_{0})\to H^{*}_{D}(E(Z)_{0}).$$ At the same time the inclusion $j:(X,Z)\hookrightarrow (X,Y)$ induces a map $$j^{*}:H^{*}(X,Y)\to H^{*}(X,Z).$$ With the previous notations we prove the following useful lemma.

\begin{lemma}\label{ssc}
There are isomorphisms $f_{Y}^{*}:H^{*}_{D}(E(Y)_{0})\to H^{*}(X,Y)$ and $f_{Z}^{*}:H^{*}_{D}(E(Z)_{0})\to H^{*}(X,Z)$ such that the following diagram is commutative:
$$\begin{tikzpicture}[xscale=3, yscale=2]

    \node (A0_0) at (0, 0) {$H^{*}(X,Y)$};
    \node (A1_0) at (1, 0) {$ H^{*}(X,Z)$};
    \node (A0_1) at (0, 1) {$H^{*}_{D}(E(Y)_{0})$};
    \node (A1_1) at (1, 1) {$H^{*}_{D}(E(Z)_{0})$};
    \path (A0_0) edge [->] node [auto] {$j^{*}$} (A1_0);
    \path (A0_1) edge [->] node [auto,swap ] {$f_{Y}^{*}$} (A0_0);
    \path (A0_1) edge [->] node [auto] {$i_{0}^{*}$} (A1_1);
    \path (A1_1) edge [->] node [auto] {$f_{Z}^{*}$} (A1_0);
      \end{tikzpicture}
$$
\end{lemma}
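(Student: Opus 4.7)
The plan is to construct $f_Y^*$ and $f_Z^*$ simultaneously at the cochain level using a single chain contraction that is compatible with the subcomplex inclusion $E(Y)_0 \hookrightarrow E(Z)_0$, so that commutativity of the diagram reduces to commutativity on the nose.

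First I would apply Lemma \ref{contraction} to the augmented Mayer--Vietoris rows. For each fixed $q \ge 0$, the augmented row for $(X,Z)$,
$$0 \to C^q_{\mathfrak{U}}(X,Z) \xrightarrow{r^*} \check{C}^0(\mathfrak{U}, \mathfrak{U}\cap Z; C^q) \xrightarrow{\delta} \check{C}^1(\mathfrak{U}, \mathfrak{U}\cap Z; C^q) \xrightarrow{\delta} \cdots,$$
is acyclic by the Mayer--Vietoris principle and consists of free $\Z_2$-modules. The analogous row for $(X,Y)$ sits inside it as an acyclic subcomplex, because $Z \subset Y$ forces $C^q_{\mathfrak{U}}(X,Y) \subset C^q_{\mathfrak{U}}(X,Z)$ and $\check{C}^p(\mathfrak{U}, \mathfrak{U}\cap Y; C^q) \subset \check{C}^p(\mathfrak{U}, \mathfrak{U}\cap Z; C^q)$ for every $p$. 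Lemma \ref{contraction} therefore supplies a chain contraction $K=K^q$ on the ambient row that restricts to one on the subrow.

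Next I would run the construction recalled in the paragraph above the lemma statement. Using $K$, the explicit formula
$$f(c) = \sum_{i=0}^n (dK)^i c_i + \sum_{i=0}^{n+1} K(dK)^{i-1} b_i$$
defines a chain map $f_Z : E(Z)_0^{*,*} \to C^*_{\mathfrak{U}}(X,Z)$ which is a homotopy inverse to $r^*$. Since $K$, $d$ and the assembly of total cocycles all preserve the subcomplex $E(Y)_0^{*,*}$, the very same formula restricts to a homotopy inverse $f_Y : E(Y)_0^{*,*} \to C^*_{\mathfrak{U}}(X,Y)$. Passing to cohomology yields the promised isomorphisms $f_Y^*$ and $f_Z^*$, and by construction the square
$$\begin{tikzpicture}[xscale=3.4, yscale=1.5]
\node (A) at (0,1) {$E(Y)_0$};
\node (B) at (1,1) {$E(Z)_0$};
\node (C) at (0,0) {$C^*_{\mathfrak{U}}(X,Y)$};
\node (D) at (1,0) {$C^*_{\mathfrak{U}}(X,Z)$};
\path (A) edge [->] node [auto] {$i_0$} (B);
\path (A) edge [->] node [auto,swap] {$f_Y$} (C);
\path (B) edge [->] node [auto] {$f_Z$} (D);
\path (C) edge [->] node [auto] {$\iota$} (D);
\end{tikzpicture}$$
commutes on the nose, where $\iota$ is the obvious inclusion of $\mathfrak{U}$-small cochain complexes. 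On $d$-cohomology, $\iota$ is identified with $j^*:H^*(X,Y)\to H^*(X,Z)$, yielding the stated commutative diagram.

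The only substantial ingredient is the simultaneous contractibility used to produce $K$, and this is precisely what Lemma \ref{contraction} is engineered for: the freedom to first pick a right inverse on the subcomplex and then extend it to the ambient complex is the algebraic incarnation of the naturality we need. Everything else is bookkeeping.
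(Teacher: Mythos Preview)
Your proof is correct and follows essentially the same approach as the paper: apply Lemma~\ref{contraction} to the augmented Mayer--Vietoris rows to obtain a chain contraction $K$ on the $(X,Z)$-complex restricting to one on the $(X,Y)$-subcomplex, build $f_Z$ from the explicit formula so that it restricts to $f_Y$, and then pass to cohomology where the inclusion of $\mathfrak U$-small cochain complexes realizes $j^*$. Your write-up is slightly more explicit about the chain-level commutative square, but the argument is the same.
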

\begin{proof}The augmented Mayer-Vietoris complex for the pair $(X,Y)$ relative to $\mathfrak{U}$ is a subcomplex of the augmented Mayer-Vietoris complex for the pair $(X,Z)$ relative to $\mathfrak{U}.$ Thus by Lemma \ref{contraction} for every $q\geq 0$ there exists a chain contraction $K_{Z}$ for the complex $$0\to C^{q}_{\mathfrak{U}}(X,Z)\to\check{C}^{0}(\mathfrak{U}, \mathfrak{U}\cap Z, C^{q})\to \cdots$$ which restricts to a chain contraction $K_{Y}$ for the complex $$0\to C^{q}_{\mathfrak{U}}(X,Y)\to\check{C}^{0}(\mathfrak{U}, \mathfrak{U}\cap Y, C^{q})\to \cdots$$
We define $f_{Y}$ and $f_{Z}$ with the above construction and we take $f_{Y}^{*}$ and $f_{Z}^{*}$ to be the induced maps in cohomology. Then  $f_{Z}$ restricted to $E(Y)_{0}^{*,*}$ coincides with $f_{Y}$ and since $j^{*}$ is induced by the inclusion $j^{\natural}:C_{\mathfrak{U}}^{q}(X,Y)\to C_{\mathfrak{U}}^{q}(X,Z),$ then the conclusion follows.
\end{proof}

\begin{remark}
Notice that $i_{0}:E(Y)_{0}^{*,*}\to E(Z)_{0}^{*,*}$ induces maps of spectral sequences respecting the bigradings $(i_{r})_{a,b}:E(Y)_{r}^{a,b}\to E(Z)_{r}^{a,b}$ and thus also a map $i_{\infty}:E(Y)_{\infty}\to E(Z)_{\infty}.$ Even tough $E(Y)_{\infty}\simeq H^{*}(X,Y)$ and $E(Z)_{\infty}\simeq H^{*}(X,Z),$ in general $i_{\infty}$ does not equal $j_{*}$ (neither their ranks do); the same considerations hold for the more general case of a map of pairs $f:(X,Y)\to(X',Y').$
\end{remark}
We recall also the following fact. Given a first quadrant bicomplex $E_{0}^{*,*}$ with total differential $D=d+\delta$ and associated convergent spectral sequence $(E_{r},d_{r})_{r\geq 0},$ then $$E_{\infty}^{*}\simeq G H^{*}_{D}(E_{0})$$ and there is a canonical homomorphism $$p_{E}:H_{D}^{*}(E_{0})\to E_{\infty}^{0,*}$$ constructed as follows.
Let $[\psi]_{D}\in H_{D}^{k}(E_{0});$ then there exists $\psi_{i}\in E_{0}^{i,k-i}$ for $i=0,\ldots,k$ such that $D(\psi_{0}+\cdots+\psi_{k})=0$ and $$[\psi]_{D}=[\psi_{0}+\cdots+\psi_{k}]_{D}.$$
By definition of the differentials $d_{r}, r\geq 0,$ the element $\psi_{0}$ survives to $E_{\infty}.$ We check that the correspondence $$p_{E}:[\psi]_{D}\mapsto [\psi_{0}]_{\infty}$$ is well defined: since $\psi_{0}\in E_{0}^{0,k}$ and $E_{0}^{i,j}=0$ for $i<0,$ then $[\psi_{0}]_{\infty}=[\psi_{0}']_{\infty}$ if and only if $\psi_{0}$ and $\psi_{0}'$ survive to $E_{\infty}$ and $[\psi_{0}]_{1}=[\psi_{0}']_{1};$ if $\psi=\psi'+D \phi,$ then $\psi_{0}=\psi_{0}'+d\phi_{0}$ and thus $[\psi_{0}]_{1}=[\psi_{0}']_{1}.$

\section{Projective inclusion}

In this section we study the image of the homology of $X$ under the inclusion map $$j:X\to \P^{n}.$$
Using the above notations, we define $\hat{B}=\{(\hat{\omega}, x)\in \hat{\Omega} \times \P^{n}\, : \, (\hat{\omega}\hat{p})(x)>0\}$ and we call $(E_{r},d_{r})$ the spectral sequence of Theorem A converging to $H^{*}(\hat{\Omega}\times \P^{n},\hat{B}).$ Moreover we let $K_{0}^{*,*}$ be the Leray bicomplex for the map $\hat{\Omega}\times \P^{n}\to \hat{\Omega}$ (it equals the Kunneth bicomplex for $\hat{\Omega}\times \P^{n}$). Thus there is a morphism of spectral sequence $(i_{r}:E_{r}\to K_{r})_{r\geq0}$ induced by the inclusion $j:(\hat{\Omega}\times \P^{n},\emptyset)\to (\hat{\Omega}\times \P^{n},B).$ With the above notations we prove the following theorem which gives the rank of the homomorphism $$j_{*}:H_{*}(X)\to H_{*}(\P^{n}).$$

\begin{thmc}\label{projincl}
For every $b\in \Z$ the following holds: $$\emph{\textrm{rk}} (j_{*})_{b}=\emph{\textrm{rk}} (i_{\infty})_{0,n-b}.$$
Moreover the map $(i_{\infty})_{0,n-b}:E_{\infty}^{0,n-b}\to K_{\infty}^{0,n-b}=\Z_{2}$ is an isomorphism onto its image.

\end{thmc}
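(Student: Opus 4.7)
The plan is to reduce $\mathrm{rk}(j_*)_b$ to a spectral-sequence computation, exploiting that the Kunneth bicomplex $K$ is concentrated in a single column. First, I would identify $j_*$ with the map $j^*$ induced by the inclusion of pairs. The construction in the proof of Theorem~A gives $\hat\Omega\approx C\Omega$ and $\hat p^{-1}(\hat K)=X$; combining Lemma~\ref{lemmahomeq} (applied to $\hat p$) with the fact that the second-factor projection $\beta_r:\hat\Omega\times\P^n\to\P^n$ is a homotopy equivalence yields, via the Five Lemma, a natural isomorphism $H^k(\hat\Omega\times\P^n,\hat B)\cong H^k(\P^n,\P^n\setminus X)$. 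Alexander--Pontryagin duality identifies the latter with $H_{n-k}(X)$, and the naturality of duality applied to the inclusion of pairs $(\P^n,\emptyset)\hookrightarrow(\P^n,\P^n\setminus X)$ shows that under these identifications $j^*$ corresponds to $j_*$. Hence $\mathrm{rk}(j_*)_b=\mathrm{rk}(j^*)_{n-b}$.

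Next, I would compute the target spectral sequence. Since $\hat\Omega=C\Omega$ is contractible,
$$K_2^{i,j}=H^i(\hat\Omega)\otimes H^j(\P^n)$$
vanishes for $i>0$, and equals $\Z_2$ for $i=0$ and $0\leq j\leq n$. All higher differentials are zero because they land in an empty column, so $K_\infty=K_2$ is supported in the column $i=0$. In particular, the induced filtration on $H^{n-b}(\hat\Omega\times\P^n)$ has its level $G^i$ vanishing for $i\geq 1$ and equal to $\Z_2$ for $i=0$.

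Now, since $G^i=0$ for $i\geq 1$, every element of filtration level $\geq 1$ of $H^{n-b}(\hat\Omega\times\P^n,\hat B)$ is killed by $j^*$. Hence $j^*$ factors as
$$H^{n-b}(\hat\Omega\times\P^n,\hat B)\twoheadrightarrow E_\infty^{0,n-b}\xrightarrow{(i_\infty)_{0,n-b}}\Z_2,$$
where the first arrow is the quotient by the first filtration level $F^1$ and the second is the associated graded map. Surjectivity of the first arrow gives $\mathrm{rk}(j^*)_{n-b}=\mathrm{rk}(i_\infty)_{0,n-b}$, which, combined with the identification in the first paragraph, yields the rank formula.

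Finally, in the $0$-th column the higher differentials only point outward, so $E_\infty^{0,n-b}$ injects into $E_2^{0,n-b}=H^0(C\Omega,\Omega^{n-b+1})$. The long exact sequence of the pair and the contractibility of $C\Omega$ show that this group is either $0$ or $\Z_2$, so $(i_\infty)_{0,n-b}$ is automatically an isomorphism onto its image. The main delicacy in this plan is the naturality square of the first paragraph: one must check that the Five Lemma chain of isomorphisms $H^*(\hat\Omega\times\P^n,\hat B)\cong H^*(\P^n,\P^n\setminus X)\cong H_{n-*}(X)$ intertwines the inclusion-of-pairs map on the spectral-sequence side with $j_*$ on the homology side, which requires invoking naturality of both the Mayer--Vietoris long exact sequence used in the Five Lemma argument and of Alexander--Pontryagin duality with respect to closed inclusions.
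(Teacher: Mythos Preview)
Your argument for the rank formula is essentially the paper's, phrased in filtration language rather than via the explicit edge maps $p_E,p_K$ the paper uses: you observe that $K_\infty$ is concentrated in the column $i=0$, so $j^*$ kills $F^1$ and factors through the surjection onto $E_\infty^{0,n-b}$ followed by $(i_\infty)_{0,n-b}$. This is correct, and the naturality issue you flag is exactly what the paper handles with Lemma~\ref{ssc} and the commutative square relating $H_D^*(E_0)$, $H_D^*(K_0)$ and $H^*(\hat\Omega\times\P^n,\hat B)$, $H^*(\hat\Omega\times\P^n)$.

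There is, however, a genuine gap in your last step. Knowing that $E_\infty^{0,n-b}$ is either $0$ or $\Z_2$ does \emph{not} make $(i_\infty)_{0,n-b}$ ``automatically'' an isomorphism onto its image: a $\Z_2$-linear map $\Z_2\to\Z_2$ is injective only if it is nonzero, and nothing you have said rules out $(i_\infty)_{0,n-b}=0$. The paper closes this gap as follows. If $E_\infty^{0,n-b}\ne 0$ then, since $E_\infty^{0,n-b}\subseteq E_2^{0,n-b}=H^0(C\Omega,\Omega^{n-b+1})$, one must have $\Omega^{n-b+1}=\emptyset$, whence $E_2^{0,n-b}=\Z_2$ and in fact $E_\infty^{0,n-b}=E_2^{0,n-b}$. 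Thus $(i_\infty)_{0,n-b}=(i_2)_{0,n-b}$, and the latter is the map $H^0(C\Omega,\emptyset)\to H^0(C\Omega)\otimes H^{n-b}(\P^n)$ induced by the inclusion of pairs, which is visibly the identity $\Z_2\to\Z_2$. Without this check your ``moreover'' clause is unproven, and with it the identity $\mathrm{rk}(j_*)_b=\dim E_\infty^{0,n-b}$ (the form of Theorem~C stated in the Introduction) does not follow.
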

\begin{proof}

First we look at the following commutative diagram of maps

$$\begin{tikzpicture}[xscale=4, yscale=2]

    \node (A0_0) at (0, 0) {$H_{b}(X)$};
    \node (A0_1) at (0, 1) {$ H^{n-b}(\P^{n},\P^{n}\backslash X)$};
    \node (A0_2) at (0, 2) {$H^{n-b}(\hat{\Omega}\times \P^{n},\hat{B})$};

    \node (A1_0) at (1, 0) {$H_{b}(\P^{n})$};
    \node (A1_1) at (1, 1) {$H^{n-b}(\P^{n})$};
    \node (A1_2) at (1, 2) {$H^{n-b}(\hat{\Omega}\times \P^{n})$};
        \path (A0_0) edge [->] node [auto] {$P^{*}$} (A0_1);
    \path (A0_1) edge [->] node [auto] {$\beta_{l}^{*}$} (A0_2);
    \path (A0_0) edge [->] node [auto] {$(j_{*})_{b}$} (A1_0);
    \path (A0_1) edge [->] node [auto] {$({j'}^{*})_{n-b}$} (A1_1);
    \path (A0_2) edge [->] node [auto] {$(j^{*})_{n-b}$} (A1_2);
       \path (A1_0) edge [->] node [auto, swap] {$P^{*}$} (A1_1);
    \path (A1_1) edge [->] node [auto, swap] {$\beta_{l}^{*}$} (A1_2);

      \end{tikzpicture}
$$

where the maps $j_{*},j^{*}$ and ${j'}^{*}$ are those induced by inclusions and the $P^{*}$'s are Poincaré duality isomorphisms; commutativity follows from naturality of Poincaré duality.
Since $\hat{\Omega}\approx C\Omega,$ then it is contractible and $\beta_{l}:(\hat{\Omega}\times \P^{n},\hat{B})\to (\P^{n},\P^{n}\backslash X)$ is a homotopy equivalence; hence all the vertical arrows are isomorphisms. Thus we identify $(j_{*})_{b}$ with $(j^{*})_{n-b}.$\\
Let now $\eps>0$ such that $\hat{C}(\eps)\hookrightarrow \hat{B}$ is a homotopy equivalence (we use the above notations); then the inclusion of pairs
$$(\hat{\Omega}\times \hat \P^{n}, \hat{C}(\eps))\stackrel{\hat{j}(\eps)}{\longrightarrow}(\hat{\Omega}\times \P^{n}, \hat{B})$$
also is a homotopy equivalence and the inclusion $(\hat{\Omega}\times \P^{n},\emptyset)\stackrel{j}{\longrightarrow}(\hat{\Omega}\times \P^{n}, \hat{B})$ factors trough:

$$\begin{tikzpicture}[xscale=2, yscale=2]

    \node (A2_0) at (2, 0) {$(\hat{\Omega}\times\P^{n},\hat{C}(\eps))$};
    \node (A1_1) at (1, 1) {$(\hat{\Omega}\times\P^{n},\emptyset)$};
    \node (A3_1) at (3, 1) {$(\hat{\Omega}\times\P^{n},\hat{B}(\eps))$};

            \path (A1_1) edge [->] node [auto] {$j$} (A3_1);
    \path (A1_1) edge [->] node [auto,swap] {$j(\eps)$} (A2_0);
    \path (A2_0) edge [->] node [auto, swap] {$\hat{j}(\eps)$} (A3_1);

      \end{tikzpicture}
$$
Since $\hat{j}(\eps)$ is a homotopy equivalence, it follows that:
$$\textrm{rk} (j^{*})_{n-b}=\textrm{rk} (j(\eps)^{*})_{n-b}.$$
Let now $\mathfrak{U}$ be any cover of $\hat{\Omega}$ and consider the Leray-Mayer-Vietoris bicomplexes $\hat{F}^{*,*}(\eps, \mathfrak{U})$ and $K_{0}^{*,*}(\mathfrak{U})$ with their respective associated spectral sequences; since $i_{0}(\eps, \mathfrak{U}):\hat{F}_{0}^{*,*}(\eps, \mathfrak{U})\hookrightarrow K_{0}^{*,*}(\mathfrak{U})$ there is a morphism of respective spectral sequences. Moreover by Mayer-Vietoris argument, the spectral sequence $(\hat{F}_{r}(\eps, \mathfrak{U}), \hat{d}_{r}(\eps, \mathfrak{U}))_{r\geq0}$ converges to $H^{*}(\hat{\Omega}\times \P^{n},\hat{C}(\eps))$ and $(K_{r}(\mathfrak{U}), d_{r}(\mathfrak{U}))_{r\geq0}$ converges to $H^{*}(\hat{\Omega}\times \P^{n}, \emptyset).$
We look now at the following commutative diagram:

$$\begin{tikzpicture}[xscale=4.5, yscale=2]

    \node (A0_0) at (0, 0) {$H^{n-b}(\hat{\Omega}\times\P^{n},\hat{B})$};
    \node (A0_1) at (0, 1) {$ H_{D}^{n-b}(E_{0}(\eps, \mathfrak{U}))$};
    \node (A0_2) at (0, 2) {$E_{\infty}^{0,n-b}(\eps, \mathfrak{U})$};

    \node (A1_0) at (1, 0) {$H^{n-b}(\hat{\Omega}\times\P^{n})$};
    \node (A1_1) at (1, 1) {$H_{D}^{n-b}(K_{0}(\mathfrak{U}))$};
    \node (A1_2) at (1, 2) {$K_{\infty}^{0,n-b}(\mathfrak{U})$};
        \path (A0_0) edge [->] node [auto] {$(f^{*}_{E})^{-1}$} (A0_1);
    \path (A0_1) edge [->] node [auto] {$p_{E}(\eps, \mathfrak{U})$} (A0_2);
    \path (A0_0) edge [->] node [auto] {$(j^{*})_{n-b}$} (A1_0);
    \path (A0_1) edge [->] node [auto] {$(i_{0}^{*}(\eps, \mathfrak{U}))_{n-b}$} (A1_1);
    \path (A0_2) edge [->] node [auto] {$(i_{\infty}(\eps, \mathfrak{U}))_{0,n-b}$} (A1_2);
       \path (A1_0) edge [->] node [auto, swap] {$(f^{*}_{K})^{-1}$} (A1_1);
    \path (A1_1) edge [->] node [auto, swap] {$p_{K}$} (A1_2);

      \end{tikzpicture}
$$
The upper square is commutative, since if we let $\psi=\psi_{0}+\cdots+\psi_{n-b}\in E_{0}^{n-b}$ with $D\psi=0,$ then (avoiding the $(\eps, \mathfrak{U})$-notations, but only for the next formula): $$p_{K}(i_{0}^{*})_{n-b}[\psi]_{E}=p_{K}[\psi]_{K}=[\psi_{0}]_{\infty, K}=(i_{\infty})_{0,n-b}[\psi_{0}]_{\infty, E}=(i_{\infty})_{0,n-b}p_{E}[\psi]_{E}.$$
The lower square is the one coming from Lemma \ref{ssc} with the vertical arrows inverted, hence it is commutative.\\
Since $K_{\infty}(\mathfrak{U})=K_{2}(\mathfrak{U})$ has only one column (the first), then $p_{K}(\mathfrak{U}):H_{D}^{n-b}(K_{0}(\mathfrak{U}))\to K_{\infty}^{0,n-b}(\mathfrak{U})$ is an isomorphism, hence for $0\leq b\leq n$ and using the above identifications we can identify the map $(j_{*})_{b}:H_{b}(X)\to H_{b}(\P^{n})$ with $$(i_{\infty}(\eps, \mathfrak{U}))_{0,n-b} (p_{E}(\eps, \mathfrak{U}))_{n-b}:H_{D}^{n-b}(E_{0}(\eps, \mathfrak{U}))\to \Z_{2}.$$
Since $(p_{E}(\eps, \mathfrak{U}))_{n-b}$ is surjective, then:
$$\textrm{rk} (j^{*})_{n-b}=\textrm{rk} (i_{\infty}(\eps, \mathfrak{U}))_{0,n-b}.$$
By Corollary \ref{ac} and Lemma \ref{lemmacover} there exists a family $\mathcal{C}$ of covers which is cofinal in the family of all covers such that for every $\mathfrak{U}\in \mathcal{C}$ the natural map $\hat{F}^{i,j}_{2}(\eps, \mathfrak{U})\to \hat{F}_{2}^{i,j}(\eps)$ is an isomorphism.
\begin{remark} Since here  we do not need the cover to be convex, the existence of the family $\mathcal{C}$ follows from easier consideration. Let $h:\hat{\Omega} \to |K|\subset \R^{N}$ be a triangulation respecting the filtration $\{\hat{\Omega}_{j}\}_{j=0}^{n+2},$ and $\mathfrak{W}$ be a cover of $\hat{\Omega}.$ Let $\mathfrak{U}'$ be a \emph{convex} cover of $|K|$ refining $h(\mathfrak{ W})$ and such that for every $U'\in \mathfrak{U}'$ the intersection $h(\hat{\Omega}_{j})\cap U'$ is contractible  for every $j$ (the existence of such a $\mathfrak{U}'$ follows from the fact that $h(\hat{\Omega}_{j})$ is a subcomplex of $|K|).$ Then the cover $\mathfrak{U}=h^{-1}(\mathfrak{U}')$  refines $\mathfrak{W}$ and since for every $j$ and $U\in \mathfrak{U}$ the intersection $\hat{\Omega}_{j}\cap U$ is contractible, then the natural map $\hat{F}^{i,j}_{2}(\eps, \mathfrak{U})\to \hat{F}_{2}^{i,j}(\eps)$ is an isomorphism. \end{remark}
It follows that $\textrm{rk} (i_{\infty}(\eps, \mathfrak{U})_{0,n-b})=\textrm{rk} (i_{\infty} (\eps))_{0,n-b},$ and thus by semialgebraicity we have
$$\textrm{rk}(i_{\infty}(\eps))_{0,n-b}=\textrm{rk} (i_{\infty})_{0,n-b}.$$
It remains to study the map $(i_{\infty})_{0,n-b}:E_{\infty}^{0,n-b}\to K_{\infty}^{0,n-b}=K_{2}^{0,n-b}.$ \\
If $E_{\infty}^{0,n-b}$ is zero, then $(i_{\infty})_{0,n-b}$ is obviously an isomorphism onto its image.\\
If $E_{\infty}^{0,n-b}$ is not zero then, since $E_{2}^{0,n-b}=H^{0}(C\Omega,\Omega^{n-b+1}),$ it must be $\hat{\Omega}^{n-b+1}=\emptyset$ and $$E_{\infty}^{0,n-b}=E_{2}^{0,n-b}=\Z_{2}.$$ From this it follows that $$i_{\infty}^{0,n-b}=i_{2}^{0,n-b}.$$
By looking directly at the two spectral sequences, we see that $i_{2}^{0,n-b}:H^{0}(\hat{\Omega},\emptyset)\to H^{0}(\hat{\Omega})\otimes H^{n-b}(\P^{n})$ is the identity and then the conclusion follows.
\end{proof}
We can immediately derive the following elementary corollary\begin{coro}
If $b>n-\mu$ then $(j_{*})_{b}=0.$
\end{coro}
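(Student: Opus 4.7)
This corollary is essentially immediate given what has been established, so the plan is short.

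The plan is to apply Theorem C and then read off the relevant entry of $E_2$. By Theorem C, $\mathrm{rk}(j_*)_b = \dim E_\infty^{0,n-b}$, so it suffices to show that $E_\infty^{0,n-b} = 0$ whenever $b > n - \mu$. Since $(E_r,d_r)$ is a first-quadrant spectral sequence, $E_\infty^{0,n-b}$ is a subquotient of $E_2^{0,n-b}$; hence it is enough to prove $E_2^{0,n-b} = 0$ under the hypothesis $b > n-\mu$, i.e.\ $n-b < \mu$.

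First I would recall from Theorem A that $E_2^{0,j} = H^0(C\Omega,\Omega^{j+1})$. Setting $j = n-b$, the hypothesis $j < \mu$ means that the maximum of $\ii^+$ on $\Omega$ strictly exceeds $j$, so there exists $\omega_0 \in \Omega$ with $\ii^+(\omega_0 p) \geq j+1$; equivalently $\Omega^{j+1} \neq \emptyset$. Then in the long exact sequence of the pair $(C\Omega,\Omega^{j+1})$,
$$
0 \longrightarrow H^0(C\Omega,\Omega^{j+1}) \longrightarrow H^0(C\Omega) \longrightarrow H^0(\Omega^{j+1}),
$$
the map $H^0(C\Omega) = \Z_2 \to H^0(\Omega^{j+1})$ is injective (it sends the generator to the class of the constant function $1$, which is nonzero on any nonempty space). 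Therefore $E_2^{0,n-b} = H^0(C\Omega,\Omega^{n-b+1}) = 0$. Alternatively, one can simply point to the explicit table of $E_2$ in the Introduction, where every row with index $j < \mu$ has a $0$ in column $i=0$.

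Combining the two observations: $E_2^{0,n-b} = 0$ forces $E_\infty^{0,n-b} = 0$, and Theorem C then gives $\mathrm{rk}(j_*)_b = 0$, i.e.\ $(j_*)_b = 0$. There is no real obstacle here; the statement is just the precise homological shadow of the fact that the top $\mu$ rows of the zeroth column are $\Z_2$ while the rows below vanish, a feature visible directly from the $E_2$-table.
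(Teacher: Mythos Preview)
Your proof is correct and follows essentially the same route as the paper: from $b>n-\mu$ one deduces $\Omega^{n-b+1}\neq\emptyset$, hence $E_{2}^{0,n-b}=H^{0}(C\Omega,\Omega^{n-b+1})=0$, and Theorem~C finishes the argument. The paper's version is simply a terser rendering of the same steps.
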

\begin{proof}
Since $n-b<\mu$ then $\Omega^{n-b+1}\neq \emptyset.$ This gives $E_{2}^{0,n-b}=0$ and thus applying the previous theorem the conclusion follows.
\end{proof}

\section{Hyperplane section}

We consider here the following problem: given $X\subset \P^{n}$ defined by quadratic inequalities and $V$ a codimension one subspace of $\R^{n+1}$ with projectivization $\bar
V\subset \P^n$, determine the homology of $(X, X\cap \bar{V}).$\\
Thus let $p:\R^{n+1}\to \R^{k+1}\supseteq K$ be homogeneous quadratic and $X=p^{-1}(K)\subset \P^{n}.$ Let $h$ be a degree one homogeneous polynomial such that
$$V=\{h=0\}=\{h^{2}=0\}.$$
We can consider the function $\ii_{V}^{+}:\Omega \to \N$ defined by
$$\ii^{+}_{V}(\omega)=\ii^{+}(\omega p|_{V})$$
and we try describe the homology of $(X, X\cap \bar{V})$ only in terms of $\ii^{+}$ and $\ii^{+}_{V}.$\\
We introduce the quadratic map $p_{h}:\R^{n+1}\to \R^{k+2}$ defined by
$$p_{h}\doteq(p, h^{2}).$$
Then we have the following equalities:
$$X=p_{h}^{-1}(K\times \R)\quad \textrm{and}\quad X\cap \bar{V}=p_{h}^{-1}(K\times (-\infty,0]).$$
We consider $\hat{\Omega}=(K\times (-\infty,0])^{\circ}\cap S^{k+1},$ and the function $\ii_{h}^{+}:\R^{k+1}\times \R\to \N$ defined by
$$\ii^{+}_{h}(\omega, t)=\ii^{+}(\bar{p}_{h}(\omega,t))=\ii^{+}({\omega}p+th^{2}),\quad (\omega,t)\in\R^{k+1}\times \R.$$
For the moment we define, for $j\in \Z$ the set
$$\hat{\Omega}^{j+1}=\{\eta \in \hat{\Omega}\, : \, \ii_{h}^{+}(\eta)\geq j+1\}$$ and we identify $\Omega$ with $\{(\omega,t)\in \hat{\Omega}\, : \, t=0\}.$\\
With the previous notations we prove the following.

\begin{lemma}\label{relative}
There exists a cohomology spectral sequence $(G_{r},d_{r})$ of the first quadrant converging to $H_{n-*}(X, X\cap \bar V)$ such that
$$G_{2}^{i,j}=H^{i}(\hat{\Omega}^{j+1},\Omega^{j+1}).$$
\end{lemma}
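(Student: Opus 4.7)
The plan is to adapt the construction of Theorem \ref{basiccoo} to a relative setting, applied to the map of pairs $\beta_l : (\hat B, B) \to (\hat\Omega, \Omega)$, where $\hat B = \{(\eta,x) \in \hat\Omega \times \P^n : (\eta p_h)(x)>0\}$ and $B = \hat B \cap (\Omega \times \P^n)$. I first fix the abutment. Lemma \ref{lemmahomeq}, applied separately to $(p,K)$ and $(p_h, K\times(-\infty,0])$, shows that $\beta_r|_B : B \to \P^n\setminus X$ and $\beta_r|_{\hat B} : \hat B \to \P^n \setminus (X\cap \bar V)$ are homotopy equivalences; since they are compatible with the two inclusions, the five-lemma and Alexander--Pontryagin duality together give
$$H^*(\hat B, B) \;\cong\; H^*\bigl(\P^n\setminus(X\cap \bar V),\; \P^n\setminus X\bigr) \;\cong\; H_{n-*}(X, X\cap \bar V),$$
which is the correct target.

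Next I would build the spectral sequence by mimicking Theorem \ref{basiccoo}: fix a scalar product $q_0$, pass to the compact $\eps$-thickening $\hat C(\eps) = \{(\eta,x) : (\eta p_h)(x) \ge \eps\, q_0(x)\}$, and for every $p_h$-regular cover $\mathfrak U$ of $\hat\Omega$ in the sense of Lemma \ref{lemmacover} form the relative Leray bicomplex
$$G_0^{i,j}(\eps,\mathfrak U) \;=\; \prod_{\alpha_0<\cdots<\alpha_i} C^j\Bigl(\beta_l^{-1}(V_{\bar\alpha})\cap \hat C(\eps),\; \beta_l^{-1}(V_{\bar\alpha}\cap\Omega)\cap \hat C(\eps)\Bigr).$$
Exactly as in Theorem \ref{basiccoo}, taking $\varinjlim_{\mathfrak U}\varprojlim_\eps$ yields a first-quadrant spectral sequence $(G_r,d_r)$ abutting to $H^*(\hat B, B)$.

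The heart of the proof is the computation of $G_2$. By properness of $\beta_l$ restricted to $\hat C(\eps)$, the Leray sheaf $\mathcal G^j$ on $\hat\Omega$ has stalk $H^j(\hat B_\eta, B_\eta)$ at $\eta$. For $\eta=(\omega,0)\in\Omega$ one has $\eta p_h = \omega p$, so $\hat B_\eta = B_\eta$ and the stalk vanishes. For $\eta\in\hat\Omega\setminus\Omega$ we have $B_\eta=\emptyset$ and $\hat B_\eta\simeq \P^{\,\ii_h^+(\eta)-1}$, so the stalk is $\Z_2$ precisely when $\ii_h^+(\eta)\ge j+1$. Hence $\mathcal G^j$ is the extension by zero of the constant sheaf $\Z_2$ on the open subset $U_j := \hat\Omega^{j+1}\setminus\Omega^{j+1}$ of $\hat\Omega$. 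The short exact sequence of sheaves $0\to (j_{U_j})_!\Z_2 \to \Z_{\hat\Omega}\to i_*\Z_2\to 0$ identifies $\check H^i(\hat\Omega;\mathcal G^j)$ with $H^i\bigl(\hat\Omega,\; (\hat\Omega\setminus\hat\Omega^{j+1})\cup\Omega\bigr)$; excising the closed set $\hat\Omega\setminus\hat\Omega^{j+1}$ produces the pair $(\hat\Omega^{j+1},\Omega^{j+1})$ as desired.

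The main obstacle will be the final excision: $\hat\Omega^{j+1}$ is only open in $\hat\Omega$ (its boundary lies in $\hat\Omega\setminus\hat\Omega^{j+1}$), so the excision must be justified by a thickening argument that uses the semialgebraic structure, the local-product description of the stratification by $\ii_h^+$ coming from Proposition \ref{topquad}, and the $p_h$-regularity of $\mathfrak U$ provided by Lemma \ref{lemmacover}. The remaining limit arguments in $\eps$ and in $\mathfrak U$, as well as the Mayer--Vietoris step giving convergence to $H^*(\hat B, B)$, follow the pattern of Theorem \ref{basiccoo} without essential change.
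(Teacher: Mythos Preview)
Your setup and abutment computation match the paper's exactly: apply the relative Leray spectral sequence of $\beta_l$ to the compact pair $(\hat C(\eps), C(\eps))$ and identify $H^*(B_h,B)\simeq H_{n-*}(X,X\cap\bar V)$ via Lemma~\ref{lemmahomeq} and Alexander--Pontryagin duality. The gap is in your identification of $G_2$.

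You invoke properness of $\beta_l|_{\hat C(\eps)}$ but then assert that the stalk of the Leray sheaf at $\eta$ is $H^j(\hat B_\eta,B_\eta)$. Properness gives instead the stalk $H^j\bigl((\hat C(\eps))_\eta,(C(\eps))_\eta\bigr)$; these coincide with $H^j(\hat B_\eta,B_\eta)$ only for $\eps$ small \emph{depending on $\eta$}, not for one fixed $\eps$. Computed correctly, for $\eta\notin\Omega$ the fibre is $\P^{\,n-\ii_h^-(\eps)(\eta)}$, so the sheaf is the constant $\Z_2$ on the locally closed set $\hat\Omega_{n-j}(\eps)\setminus\Omega_{n-j}(\eps)$, where $\hat\Omega_{n-j}(\eps)=\{\ii_h^-(\eps)\le n-j\}$ is \emph{closed}. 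Writing $\mathcal G^j(\eps)=i_*\,j_!\,\Z_2$ with $i:\hat\Omega_{n-j}(\eps)\hookrightarrow\hat\Omega$ the closed inclusion and $j$ the open inclusion of the complement of $\Omega_{n-j}(\eps)$, one reads off directly
\[
G_2^{i,j}(\eps)\;=\;\check H^i(\hat\Omega;\mathcal G^j(\eps))\;=\;H^i\bigl(\hat\Omega_{n-j}(\eps),\,\Omega_{n-j}(\eps)\bigr),
\]
and the $\eps$-limit, exactly as at the end of Theorem~\ref{basiccoo}, yields $H^i(\hat\Omega^{j+1},\Omega^{j+1})$. No excision step is required; this is precisely the route the paper takes.

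By contrast, the excision you propose --- removing $\hat\Omega\setminus\hat\Omega^{j+1}$ from the pair $\bigl(\hat\Omega,\,(\hat\Omega\setminus\hat\Omega^{j+1})\cup\Omega\bigr)$ --- genuinely fails: the closed set $\hat\Omega\setminus\hat\Omega^{j+1}$ is not contained in the interior of the subspace, since any boundary point of $\hat\Omega^{j+1}$ lying off $\Omega$ has neighbourhoods meeting $U_j$. A semialgebraic thickening does not repair this in general, and already in simple one-parameter examples the two cohomology groups differ. The obstacle you flag is therefore an artifact of conflating the $\eps$-stalks with the limiting ones; once the $\eps$-version is used consistently, it evaporates.
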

\begin{proof}
Consider for $\eps>0$ the sets $C_{h}(\eps)=\{(\eta,x)\in \hat{\Omega}\times \P^{n}\, : \, (\eta p_{h})(x)\geq \eps\}$ and $C(\eps)=C_{h}(\eps)\cap \Omega\times \P^{n}.$
By semialgebraic triviality for small $\eps$ the inclusion $$(C_{h}(\eps), C(\eps))\hookrightarrow (B_{h},B)$$ is a homotopy equivalence (here $B_{h}$ stands for $\{(\eta,x)\in \hat{\Omega}\times \P^{n}\, : \, (\eta p_{h})(x)>0\}$ and $B$ for $B_{h}\cap \Omega \times \P^{n}$).\\
Consider the projection $\beta_{r}:\hat{\Omega}\times \P^{n}\to \P^{n};$ then $\beta_{r}(B_{h})=\P^{n}\backslash (X\cap H)$ and $\beta_{r}(B)=\P^{n}\backslash X;$ moreover by Lemma \ref{lemmahomeq} the previous are homotopy equivalences. Hence it follows:
$$H^{*}(C_{h}(\eps), C(\eps))\simeq H^{*}(B_{h},B)\simeq H^{*}(\P^{n}\backslash (X\cap H), \P^{n}\backslash X)\simeq H_{n-*}(X, X\cap H)$$
where the last isomorphism is given by Alexander-Pontryagin Duality.
Consider now $\beta_{l}:C_{h}(\eps)\to \hat{\Omega}.$ Then by Leray there is a cohomology spectral sequence $(G_{r}(\eps), d_{r}(\eps))$ converging to $H^{*}(C_{h}(\eps), C(\eps))$ such that
$$G_{2}^{i,j}=\check{H}^{i}(\hat{\Omega}, \mathcal{G}^{j}(\eps))$$
where $\mathcal{G}^{j}(\eps)$ is a sheaf such that for $\eta\in \hat{\Omega}$
$$(\mathcal{G}^{j}(\eps))_{\eta}=H^{j}(\beta_{l}^{-1}(\eta)\cap C_{h}(\eps), \beta_{l}^{-1}(\eta)\cap C(\eps))$$ (here we are using tha fact that both $C_{h}(\eps)$ and $C(\eps)$ are compact). We use now $\ii_{h}^{-}(\eps):\hat{\Omega}\to \N$ for the function $\eta\mapsto \ii^{-}(\eta p_{h} -\eps g)$ where $g$ is an arbitrary positive definite form, and we set $\hat{\Omega}_{n-j}(\eps)=\{\ii_{h}^{-}(\eps)\leq n-j\}.$
If $\eta\notin \Omega$, then $(\beta_{l}^{-1}(\eta)\cap C_{h}(\eps), \beta_{l}^{-1}(\eta)\cap C(\eps))\simeq(\P^{n-\ii_{h}^{-}(\eps)(\eta)},\emptyset);$ on the contrary if $\eta\in \Omega$ then $(\beta_{l}^{-1}(\eta)\cap C_{h}(\eps), \beta_{l}^{-1}(\eta)\cap C(\eps))=(\P^{n-\ii_{h}^{-}(\eps)(\eta)},\P^{n-\ii_{h}^{-}(\eps)(\eta)}).$ Since $\Omega$ is closed in $\hat{\Omega},$ it follows that
$$G_{2}^{j,j}(\eps)=\check{H}^{i}(\hat{\Omega}_{n-j}(\eps), \Omega_{n-j}(\eps)).$$
We define now $$(G_{r},d_{r})=\varprojlim_{\eps}\{(G_{r}(\eps),d_{r}(\eps))\}$$ and using the same argument as in the end of Theorem A we finally have
$$G_{2}^{i,j}=H^{i}(\hat{\Omega}^{j+1},\Omega^{j+1}).$$

\end{proof}

We are ready now for the proof of Theorem D; we define for $j>0$ the following set:
$$
\Omega_V^{j}=\{\omega \in \Omega : \mathrm i^{+}\left(\omega p|_V\right)\geq j\}.$$
\begin{thmd}
There exists a cohomology spectral sequence $(G_{r},d_{r})$ of the
first quadrant converging to $H_{n-*}(X_p, X_p\cap \bar V)$ such
that
$$
G_{2}^{i,j}=H^{i}(\Omega_V^{j},\Omega^{j+1}),\
j>0, \quad G_{2}^{i,0}=H^{i}(C\Omega,\Omega^1).$$
\end{thmd}

\begin{proof}
Take the spectral sequence $(G_{r},d_{r})$ to be that of lemma \ref{relative}; then it remains to prove that $G_{2}^{i,j}$ is isomorphic to the group described in the statement.\\
In the case $j=0$ we have that $\hat{\Omega}^{1}$ contains $(0,\ldots, 0,1)$ and, since $t_{1}\leq t_{2}$ implies $\ii_{h}(\omega,t_{1})\leq \ii_{h}^{+}(\omega, t_{2}),$ the set $\hat{\Omega}^{1}$ is contractible. Thus, using the long exact sequences of the pairs, we see that for every $i\geq 0$ the following holds:
$$G_{2}^{i,0}=H^{i}(\hat{\Omega}^{1},\Omega^{1})\simeq H^{i}(C\Omega, \Omega^{1}).$$
We study now the case $j>0.$\\
We identify $\hat{\Omega}\backslash \{(0,\ldots,0,1)\}$ with $\Omega \times [0,\infty)$ via the index preserving homeomorphism
$$(\omega,t)\mapsto (\omega,t)/\|\omega\|.$$
Thus, under the above identification, we have for $j>0$
$$\hat{\Omega}^{j+1}=\{(\omega,t)\in \Omega\times [0,\infty)\, : \, \ii_{h}^{+}(\omega,t)\geq j+1\}$$
and letting $\pi:\Omega \times [0,\infty)$ be the projection onto the first factor, we see that
$$\pi(\hat{\Omega}^{j+1})=\{\omega \, : \, \exists t>0 \,\,\,\textrm{s.t. } \ii_{h}^{+}(\omega,t)\geq j+1\}.$$
We prove that $\pi:\hat{\Omega}^{j+1}\to \pi(\hat{\Omega}^{j+1})$ is a homotopy equivalence. Let $\omega\in \pi(\hat{\Omega}^{j+1}),$ then there exists $t_{\omega }>0$ such that $(\omega,t_{\omega})\in \hat{\Omega}^{j+1}.$ Since $\hat{\Omega}^{j+1}$ is open, then there exists an open neighboroud $U_{\omega }\times (t_{1},t_{2})$ of $(\omega,t)$ in $\hat{\Omega}^{j+1};$ in particular for every $\eta \in U_{\omega}$ we have $(\eta,t_{\omega})\in \hat{\Omega}^{j+1}$ and $\sigma_{\omega}:\eta \mapsto (\eta, t_{\omega})$ is a section of $\pi$ over $U_{\omega}.$ Collating together the different $\sigma_{\omega}$ for $\omega \in \pi(\hat{\Omega}^{j+1}),$ with the help of a partition of unity, we get a section $\sigma:\pi(\hat{\Omega}^{j+1})\to \hat{\Omega}^{j+1}$ of $\pi.$ Since for every $\omega \in \pi(\hat{\Omega}^{j+1})$ the set $\{t\geq 0\, : \, (\omega,t)\in \hat{\Omega}^{j+1}\}$ is an interval, a straight line homotopy gives the homotopy between $\sigma \circ \pi$ and the identity on $\hat{\Omega}^{j+1}.$ This implies $\pi: \hat{\Omega}^{j+1}\to \pi(\hat{\Omega}^{j+1})$ is a homotopy equivalence.  Using the five lemma and the naturality of the commutative diagrams of the long exact sequences of pairs given by $\pi:(\hat{\Omega}^{j+1},\Omega^{j+1})\to (\pi(\hat{\Omega}^{j+1}),\Omega^{j+1})$  we get ($\pi_{|\Omega^{j+1}}=\textrm{Id}_{|\Omega^{j+1}}$):
$$G_{2}^{i,j}=H^{i}(\hat{\Omega}^{j+1},\Omega^{j+1})\simeq H^{i}(\pi(\hat{\Omega}^{j+1}),\Omega^{j+1}).$$
It remains to prove that for $j>0$
$$\pi(\hat{\Omega}^{j+1})=\Omega_{V}^{j}.$$
First suppose that $(\omega, t)\in \hat{\Omega}^{j+1}.$ Then there exists a subspace $W^{j+1}$ of dimension at least $j+1$ such that $\bar p(\omega,t)|_{W^{j+1}}>0.$ Then $$\omega p|_{W^{j+1}\cap V}=\bar p(\omega,t)|_{W^{j+1}\cap V}>0$$ and by Grassmann formula
$$\dim (W^{j+1}\cap V)=\dim(W^{j+1})+\dim(V)-\dim(W^{j+1}+V)\geq j$$
which implies $\ii_{V}^{+}(\omega)\geq j,$ i.e. $\pi(\omega,t)\in \Omega_{V}^{j}.$ Thus
$$\pi(\hat{\Omega}^{j+1})\subset \Omega_{V}^{j}.$$
Now let $\omega$ be in $\Omega_{V}^{j};$ we prove that there exists $t>0$ such that $\ii^{+}_{h}(\omega,t)\geq j+1.$
Since $\omega \in \Omega_{V}^{j}$ then there exists a subspace $V^{j}\subset V$ of dimension at least $j$ such that
$$\omega p|_{V^{j}}>0.$$ Fix a scalar product on $\R^{n+1}$ and let $e\in \R^{n+1}$ be such that $V^{\perp}=span \{e\};$ consider the space $W=\{\lambda e\}_{\lambda\in \R}+V^{j},$ whose dimension is at least $j+1$ since $e\perp V^{j}\subset V.$ Then the matrix for $\bar{p}_{h}(\omega,t)|_{W}$ with respect to the fixed scalar product has the form:
$$Q_{W}(\omega,t)=\left(\begin{array}{cc}\omega a_{0}+t &{}^{t}\omega a\\
\omega a & \omega Q_{V^{j}}\end{array}\right)$$
where $\omega Q_{V^{j}}$ is the matrix for $\bar{p}(\omega,t)|_{V^{j}}=\omega p|_{V^{j}}.$ Since $\omega p|_{V^{j}}>0$ we have that for $t>0$ big enough $\det(Q_{W}(\omega,t))=t \det(\omega Q_{V^{j}})+\det (\begin{smallmatrix}\omega a_{0}&\omega a\\\omega a&\omega Q_{V^{j}}\end{smallmatrix})$ has the same sign of $\det(\omega Q_{V^{j}})>0.$ For such a $t$ we have
$$\bar{p}_{h}(\omega,t)|_{W}>0$$ and since $\dim(W)\geq j+1$ this implies $(\omega,t)\in \hat{\Omega}^{j+1}$ and $\omega \in \pi(\hat{\Omega}^{j+1}).$ Thus
$$\Omega_{V}^{j}\subset \pi(\hat{\Omega}^{j+1})$$ and this concludes the proof.

\end{proof}

\section{Remarks on higher differentials and examples}

Let $X\subset \P^{n}$ be a compact, locally contractible subset and consider the two inclusions:
$$X\stackrel{j}{\longrightarrow}\P^{n}\quad \textrm{and}\quad \P^{n}\backslash X\stackrel{c}{\longrightarrow}\P^{n}.$$
We recall the existence for every $k\in \Z$ of the following exact sequence, which is a direct consequence of Alexander-Pontryagin Duality:
$$0\rightarrow\ker (c_{*})\rightarrow H_{k}(\P^{n}\backslash X)\stackrel{c_{*}}{\rightarrow}H_{k}(\P^{n})\simeq H^{n-k}(\P^{n})\stackrel{i^{*}}{\rightarrow}H^{n-k}(X)\rightarrow \textrm{coker} (i^{*})\rightarrow 0$$
In particular we have the following equality for the $k$-th $\Z_{2}$-Betti number of $\P^{n}$:
\begin{equation}\label{eqrk}
b_{k}(\P^{n})=\textrm{rk}(c^{*})_{k}+\textrm{rk}(j_{*})_{n-k}
\end{equation}
Consider now $p:\R^{n+1}\to \R^{k+1}\supseteq K$ such that $$\ii^{+}(\bar{p}\eta)=\mu\quad \forall \eta \in \Omega.$$
Then in this case $\Omega^{1}=\cdots=\Omega^{\mu}=\Omega$ and $\Omega^{\mu+1}=\cdots =\Omega^{n+1}=\emptyset.$ For \emph{any} scalar product $g$ on $\R^{n+1}$ we have $D_{\mu}=\Omega^{\mu}=\Omega$ and we denote by $w_{k,\mu}$ the $k$-th Stiefel-Whitney class of the $\R^{\mu}$-bundle $\bar{p}^{*}\mathcal{L}_{j}^{+}\to \Omega$ (the class now defined is independent from $g$ and also are the following results). We define $\gamma_{k,\mu}\in H^{k+1}(C\Omega,\Omega){\simeq} H^{k}(\Omega)$ by
$$\gamma_{k,\mu}\doteq\partial^* w_{k,\mu}$$
(notice that this notation agrees with the one previously used for $\gamma_{1,j}$).\\
Letting $(E_{r},d_{r})$ be the spectral sequence of Theorem A convergent to $H_{n-*}(X),$ where as usual $X=p^{-1}(K)\subseteq \P^{n},$ we have that $(E_{r},d_{r})$ degenerates at $(k+2)$-th step and $E_{2}=\cdots=E_{k+1}$. Moreover $E_{k+1}$ has entries only in the $0$-th and the $(k+1)$-th column: $$E_{k+1}^{a,b}=\left\{ \begin{array}{cc} \Z_{2}&\textrm{if $a=0$ and $\mu\leq b\leq n$ or}\\
& \textrm{$a=k+1$ and $0\leq b<\mu$} \\ 0& \textrm{otherwise}\end{array}\right.$$
Thus the only possible nonzero differential is $d_{k+1},$ for which we prove the following.

\begin{teo}\label{bundle}Suppose $\ii^{+}\equiv \mu.$ Then $E_{2}=\cdots=E_{k+1}$ and the only possible nonzero differential is $d_{k+1}:E_{k+1}^{0,b}\to E_{k+1}^{k+1,b-k}$ for $\mu\leq b\leq n$ and it is given by:
$$d_{k+1}(x)=x\smile \gamma_{k,\mu}$$\end{teo}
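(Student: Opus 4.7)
The plan is in two pieces: first to establish the concentration of $E_2$ so that the intermediate differentials $d_r$, $2\le r\le k$, vanish for bidegree reasons, and then to compute $d_{k+1}$ by iterating the cocycle-level construction from the proof of Theorem~B to height $k$.

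Under $\ii^+\equiv\mu$ the filtration degenerates: $\Omega^{j+1}=\Omega$ for $j<\mu$ and $\Omega^{j+1}=\emptyset$ for $j\ge\mu$. Feeding these into $E_2^{i,j}=H^i(C\Omega,\Omega^{j+1})$ and using contractibility of $C\Omega$ together with $\Omega$ having the $\Z_2$-cohomology of $S^k$ (the case of this section), one reads off the description of $E_{k+1}$ in the statement. Any differential $d_r$ with $2\le r\le k$ has the wrong length to reach column $a=k+1$ from column $a=0$ and equally too short to leave column $k+1$ without landing on a zero entry; hence $d_r\equiv 0$ and $E_2=\cdots=E_{k+1}$, leaving $d_{k+1}\colon E_{k+1}^{0,b}\to E_{k+1}^{k+1,b-k}$ as the only possibly nonzero arrow.

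For $d_{k+1}$ I would reproduce the scaffolding of the proof of Theorem~B and run the inductive ladder of cochains $k$ steps further. Fix $q_0>0$, perturb to $f_\eps=\bar p-\eps q_0$ nondegenerate and transverse to the Nash stratification of $\Q_0$ by Lemma~\ref{lemmatrasv}, and pick an $f_\eps$-regular cover $\mathfrak U=\{V_\alpha\}$ by Lemma~\ref{lemmacover}. Starting with $\psi^{0,b}(\alpha)=\beta_r^*\nu(\alpha,b)$, where $\nu(\alpha,b)$ is the Poincar\'e dual of $N(\alpha,b)=P^-_{n-b+1}(f_\eps(\eta_\alpha))$, build cochains $\theta^{r,b-r}\in K_0^{r,b-r}$ inductively, each one assigning to an $r$-simplex $\bar\alpha$ the Poincar\'e dual of the chain $P\bigl(\sigma_{\bar\alpha}^*f_\eps^*\mathcal L^-_{n-b+1}\bigr)\to\P^n$ obtained by projectivizing the pulled-back negative bundle over $\partial\Delta^r\simeq S^{r-1}$. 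The identity $\partial P(E)=P(E|_{\partial})$ combined with the higher-rank analog of Fact~1 from the proof of Theorem~B yields on the cocycle level
\[
\delta_K\theta^{r,b-r}\ =\ w_r\cdot\psi^{0,b-r}\ +\ \text{(exact lower-filtration terms)}.
\]
Because $d_2,\dots,d_k$ are already zero by the first step, each intermediate $w_r$-contribution ($r<k$) is represented by an exact class and can be absorbed into $\theta^{r+1,b-r-1}$, so that the only surviving term at the top of the tower is $w_k$. Passing to the limits over refinements of $\mathfrak U$ and over $\eps\to 0^+$ as in the proof of Theorem~B, and using $w_k(\mathcal L^+_\mu)=w_k(\mathcal L^-_\mu)$, one identifies $d_{k+1}(x)=x\smile\partial^*\bar p^*w_k(\mathcal L_\mu^+)=x\smile\gamma_{k,\mu}$.

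The principal obstacle is the higher-rank generalization of Fact~1: for a rank-$\mu$ real vector bundle $E\to S^r$ and a fiberwise linear map $L\colon P(E)\to\P^n$ pulling back the degree-one generator on each fiber, one needs $L^*(\text{top generator})=\pi^*w_r(E)\smile c$ in $H^{\mu-1+r}(P(E))$, with $c\in H^{\mu-1}(P(E))$ restricting to a fiber generator. This follows from the defining Leray--Hirsch relation $t^\mu=\sum_{i=1}^\mu\pi^*w_i\cdot t^{\mu-i}$ on $P(E)$: over $S^r$ only $\pi^*w_0$ and $\pi^*w_r$ can be nonzero, so $t^{\mu-1+r}=\pi^*w_r\cdot t^{\mu-1}$, and the fiberwise-linearity of $L$ forces $L^*(\text{top generator})=t^{\mu-1+r}$. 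A secondary technical point is verifying that each $\theta^{r,b-r}$ lies in the Leray subcomplex $F_0^{r,b-r}(\eps,\mathfrak U)$, which amounts to keeping the projectivized negative bundles disjoint from the positivity loci $\beta_r(B_{\bar\alpha}(\eps))$ uniformly in $\bar\alpha$, and this follows from iterating the negativity estimate of Lemma~\ref{lemmaneg} along the cover.
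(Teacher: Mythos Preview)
Your first step is fine and matches the paper's preamble. The second step, however, takes a genuinely different route from the paper's own proof, and while the underlying idea is sound, your execution has gaps.

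The paper does \emph{not} iterate the cocycle construction from Theorem~B. Instead it argues indirectly: by Theorem~C, $(d_{k+1})_{0,b}$ vanishes iff $\mathrm{rk}(j_*)_{n-b}=1$, and by the Alexander duality relation $b_k(\P^n)=\mathrm{rk}(c^*)_k+\mathrm{rk}(j_*)_{n-k}$ this is equivalent to $\mathrm{rk}(c^*)_b=0$, where $c\colon\P^n\setminus X\hookrightarrow\P^n$. Since $\ii^+\equiv\mu$, the inclusion $P(\bar p^*\mathcal L_\mu)\hookrightarrow B$ is a homotopy equivalence, so $c^*$ factors through the fiberwise-linear map $l\colon P(\bar p^*\mathcal L_\mu)\to\P^n$. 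Leray--Hirsch on this projectivized bundle, together with the defining relation $(l^*y)^\mu=\sum_i\pi^*w_{i,\mu}(l^*y)^{\mu-i}$ and the vanishing of $H^i(S^k)$ for $0<i<k$, gives $l^*y^b=\pi^*w_{k,\mu}\smile(l^*y)^{b-k}$ for $\mu\le b\le n$. Hence $(d_{k+1})_{0,b}=0$ iff $w_{k,\mu}=0$, which is the content of the formula. This is a clean global computation on a single total space, avoiding any cochain-level ladder.

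Your direct approach could in principle be made to work, but as written it has real imprecisions. First, the chain underlying $\theta^{r,b-r}(\bar\alpha)$ must live over a \emph{filled} $r$-cell mapping to $\Omega$ (so that $\delta_K\theta^{r,b-r}$ lives over $\partial\Delta^{r+1}\simeq S^r$), not over $\partial\Delta^r$ as you wrote; the dimension count fails otherwise. Second, producing such fillings for $r\ge 2$ requires choosing singular simplices $\Delta^r\to\Omega$ compatibly with the faces, and you have not said how this is done or why the bundle extends over them (it does, because the bundle is globally defined on $\Omega$, but this must be made explicit). Third, at each intermediate stage the $w_r$-term is not zero on the nose but only cohomologous to zero in the \v Cech complex; absorbing it into the next $\theta$ requires an explicit primitive, and you must track the correction terms $dr^{r+1,b-r-1}$ through the ladder. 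None of these is insurmountable, but the paper's argument via Theorem~C sidesteps all of them at the cost of one Leray--Hirsch computation.
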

\begin{remark} Notice that $\gamma_{k,\mu}$ and $x$ are nothing but numbers modulo $2,$ thus since $E_{k+1}^{0,b}= \Z_{2}=E_{k+1}^{k+1,b-k}$ the element $d_{k+1}(x)$ is nothing but the product $x\gamma_{k,\mu}.$
\end{remark}
\begin{proof}
By Theorem C we have that $d_{k+1}:E_{k+1}^{0,b}\to E_{k+1}^{k+1,b-k}$ is identically zero if and only if $\textrm{rk} (j_{*})_{n-b}=1$ and formula (\ref{eqrk}) implies
$$(d_{k+1})_{0,b}\equiv 0 \quad \textrm{iff}\quad \textrm{rk} (c^{*})_{b}=0$$
where $c^{*}$ is the map induced by $c:\P^{n}\backslash X\hookrightarrow \P^{n}.$
Consider now the following commutative diagram:
$$\begin{tikzpicture}[xscale=3, yscale=2.5]

    \node (A0_0) at (0, 0) {$\P^{n}\backslash X$};
    \node (A1_0) at (1, 0) {$\P^{n}$};
    \node (A0_1) at (0, 1) {$B$};
    \node (A1_1) at (1, 1) {$\Omega \times \P^{n}$};
    \path (A0_0) edge [->] node [auto] {$c$} (A1_0);
    \path (A0_1) edge [->] node [auto,swap ] {${\beta_{r}}_{|B}$} (A0_0);
    \path (A1_1) edge [->] node [auto] {$\beta_{r}$} (A1_0);
     \path (A0_1) edge [->] node [auto] {$\beta_{r}\circ \iota$} (A1_0);
    \path (A0_1) edge [->] node [auto] {$\iota$} (A1_1);

      \end{tikzpicture}
$$
Since ${\beta_{r}}_{|B}$ is a homotopy equivalence, then $$\textrm{rk}(c^{*})_{b}=\textrm{rk}(\iota^{*}\beta_{r}^{*})_{b}.$$
Let $\P^{\mu-1}\hookrightarrow P(\bar{p}^{*}\mathcal{L}_{\mu})\to \Omega$ be the projectivization of the bundle $\R^{\mu}\hookrightarrow \bar{p}^{*}\mathcal{L}_{\mu}\to \Omega.$ It is easily seen that the inclusion $$P(\bar{p}^{*}\mathcal{L}_{\mu})\hookrightarrow B$$ is a homotopy equivalence. From this, letting $l:P(\bar{p}^{*}\mathcal{L}_{\mu})\to \P^{n}$ be the restriction of  $\beta_{r}\circ \iota$ to $P(\bar{p}^{*}\mathcal{L}_{\mu}),$ it follows that:
$$\textrm{rk}(c^{*})_{b}=\textrm{rk}(l^{*})_{b}.$$
Let $y\in H^{1}(\P^{n})$ be the generator; since $l$ is a linear embedding on each fiber, then by Leray-Hirsch, it follows that
$$H^{*}(P(\bar{p}^{*}\mathcal{L}_{\mu}))=H^{*}(\Omega)\otimes \{1,l^{*}y,\ldots,(l^{*}y)^{\mu-1}\}.$$
Thus for $\mu\leq b\leq n$ we have:
\begin{align*}l^{*}y^{b}=&(l^{*}y)^{b}=(l^{*}y)^{\mu}\smile (l^{*}y)^{b-\mu}\\
=&\beta_{l}^{*}w_{k,\mu}\smile(l^{*y})^{\mu-k}\smile (l^{*}y)^{b-\mu}\\
=&\beta_{l}^{*}w_{k,\mu}\smile (l^{*}y)^{b-k}.
\end{align*}
Thus $(d_{k+1})_{0,b}$ is zero if and only if $w_{k,\mu}=0$ and by looking at the definition of $\gamma_{k,\mu}$ we see that
$$d_{k+1}(x)=x\smile \gamma_{k,\mu}.$$
\end{proof}

\begin{example}[The case of one quadric]
This is the most elementary example we can consider, namely the homology of a single quadric in $\P^{n}.$ Let $q\in \Q$ be a quadratic form on $\R^{n+1}$ with signature $(a,b)$ with $a \leq b$ (otherwise we can replace $q$ with $-q$) and $a+b=\textrm{rk}(q)\leq n+1.$ Consider $$X_{a,b}=\{q=0\}\subset \P^{n}.$$ For example, in the case $q$ is nondegenerate (i.e. $a+b=n+1$) then $X_{a,b}$ is smooth and $S^{a-1}\times S^{b-1}$ is a double cover of it.\\
Define the two vectors $h^{-}(X_{a,b}),h^{+}(X_{a,b})\in \N^{n}$ by: $$h^{-}(X_{a,b})=(\underbrace{1,\ldots,1}_{n+1-b},0,\ldots,0),\quad h^{+}(X_{a,b})=(0,\ldots,0,\underbrace{1,\ldots,1}_{a}).$$
Then a straightforward application of Theorem A gives the following identity for the array whose components are the $\Z_{2}$-Betti numbers of $X_{a,b}:$
$$(b_{0}(X_{a,b}),\ldots,b_{n}(X_{a,b}))=h^{-}(X_{a,b})+h^{+}(X_{a,b}).$$
Moreover if we let $j:X_{a,b}\to \P^{n}$ be the inclusion, then Theorem C gives the following:
$$(\textrm{rk}(j_{*})_{0},\ldots,\textrm{rk}(j_{*})_{n})=h^{-}(X_{a,b}).$$

\end{example}

\begin{example}[The case of two quadrics]In the case $p=(q_{1},q_{2})$ and $\ii^{+}$ not constant, then the spectral sequence of Theorem A degenerates at the second step and $E_{2}=E_{\infty}.$ In the case of constant positive index we can use Theorem \ref{bundle} to find $H_{*}(p^{-1}(K))$ (notice that $K\neq \{0\}$ again implies $E_{2}=E_{\infty}.$)

\end{example}
\begin{example}[see \cite{Milnor}]
For $a=1,2,4,8$ consider the isomorphism $\R^{a}\simeq A$ where $A$ denotes respectively $\R, \C,\mathbb{H},\mathbb{O}.$
Consider the quadratic map
$$h_{a}:\R^{a}\oplus \R^{a}\to \R^{a}\oplus \R$$
defined, using the previous identification $\R^{a}\simeq A,$ by
$$(z,w)\mapsto (2z\overline{w},|w|^{2}-|z|^{2}).$$
Then it is not  difficult to prove that $h_{a}$ maps $S^{2a-1}$ into $S^{a}$ by a Hopf fibration. Hence it follows that
$$\emptyset=K_{a}\doteq h_{a}^{-1}(0)\subset \P^{2a-1}.$$
In each case we have $\ii^{+}(\omega h_{a})=a$ for every $\omega \in \Omega=S^{a}.$ Using Theorem \ref{bundle}, since $K_{a}=\emptyset$ then $d_{a+1}$ must be an isomorphism, hence
$$0\neq w_{a,a}=w_{a}(\bar{h}_a^*\mathcal{L}_{a})\in H^{a}(S^{a}).$$
For example in the case $a=2$ we have the standard Hopf fibration ${h_{2}}_{|S^{3}}:S^{3}\to S^{2}$ and the table for $E_{2}=E_{3}$ is:
$$
\begin{array}{|c|c|c|c}
\Z_2&0&0&0\\
\Z_{2}&0&0&0\\
0&0&0&\Z_{2}\\
0&0&0&\Z_2\\
\hline
\end{array}
$$
The bundle $\R^{2}\hookrightarrow \bar{h}_{a}^{*}\mathcal{L}_{2}\to S^{2}$ has total Stiefel-Whitney class $$w(\bar{h}_{a}^{*}\mathcal{L}_{2})=1+w_{2,2},\quad w_{2,2}\neq 0$$
and the differential $d_{3}$ is an isomorphism.\\
Notice that for $a=1,2,4,8$ we have $\ker(\omega h_{a})=0$ for every $\omega \in \Omega.$ It is an interesting fact that the contrary also is true.
\begin{fact}if $p:\R^{m}\to \R^{l}$ is such that $\ker(\omega p)=\{0\}$ for every $\omega\in S^{l}$ and  ${p}_{|S^{m-1}}:S^{m-1}\to S^{l-1}$ then, up to orthonormal change of coordinates $p=h_{a}$ for some $a\in\{1,2,4,8\}.$
\end{fact}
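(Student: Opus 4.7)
My plan is to derive the dimension constraints $m=2a$ and $l=a+1$ using the spectral sequence of Theorem \ref{bundle}, and then reduce the classification to the Hopf-invariant-one theorem (Adams) together with Hurwitz's theorem on normed composition algebras.

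First I would extract the signature constraint. Nondegeneracy of $\omega p$ for every $\omega\in S^{l-1}$, combined with connectedness of $S^{l-1}$, implies that $\omega p$ has constant signature $(\mu,m-\mu)$; the reflection $\omega\mapsto-\omega$ reverses this signature, so $\mu=m-\mu$, i.e.\ $m=2\mu$. Set $a\doteq\mu$, so $m=2a$. The restriction $p_{|S^{m-1}}\colon S^{m-1}\to S^{l-1}$, together with homogeneity of degree two, gives $|p(x)|=|x|^{2}$, so $p$ does not vanish off the origin and $X_{p}\subset\P^{m-1}$ is empty.

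Next I would apply Theorem \ref{bundle} with constant index $\mu=a$, $k=l-1$ and $K=\{0\}$. The $E_{2}$-page has nonzero entries only in column $0$ (rows $b=a,\dots,2a-1$) and in column $l$ (rows $b'=0,\dots,a-1$), each contributing $a$ copies of $\Z_{2}$. The only possibly nonzero differential is $d_{l}\colon E_{l}^{0,b}\to E_{l}^{l,b-l+1}$. Since $X_{p}=\emptyset$, every entry of $E_{\infty}$ must vanish, so every such $d_{l}$ has to be an isomorphism. Matching source and target ranges under the shift $b\mapsto b-l+1$ forces $\{a,\dots,2a-1\}=\{l-1,\dots,l+a-2\}$, hence $l=a+1$. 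This is the step where the paper's spectral sequence pins down the Hopf-fibration dimensions.

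Then I would verify that $p_{|S^{m-1}}$ is a Hopf-type fiber bundle. Nondegeneracy rules out critical points of $p$ on $\R^{m}\setminus\{0\}$: any rank drop of $D_{x}p$ would produce $\omega\neq 0$ with $A_{\omega}x=0$ for $A_{\omega}=\sum\omega_{i}P_{i}$, contradicting $\ker A_{\omega}=\{0\}$. Hence $p_{|S^{2a-1}}\colon S^{2a-1}\to S^{a}$ is a proper smooth submersion, thus a locally trivial fiber bundle. Over $c=(0,\dots,0,1)\in S^{a}$ the fiber is $\{x\in S^{2a-1}\,:\,p^{l}(x)=1\}$; since $p^{l}$ has signature $(a,a)$, a direct computation in adapted orthonormal coordinates identifies this fiber with $S^{a-1}$.

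Finally I would invoke two classical results. Adams's resolution of the Hopf-invariant-one problem asserts that a fiber bundle $S^{a-1}\hookrightarrow S^{2a-1}\to S^{a}$ exists only for $a\in\{1,2,4,8\}$, which fixes the dimension to one of the four division-algebra cases. To recognise $p$ itself as $h_{a}$ up to orthonormal change of coordinates, I would decompose $\R^{2a}=V_{+}\oplus V_{-}$ along the $\pm$-eigenspaces of $P_{l}$ and polarise the identity $|p(x+y)|^{2}=|x+y|^{4}$ with $x\in V_{+}$, $y\in V_{-}$ to extract a bilinear pairing $B\colon V_{+}\times V_{-}\to\R^{a}$ satisfying $|B(x,y)|=|x|\,|y|$. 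Hurwitz's classical theorem on normed composition algebras then forces $(\R^{a},B)$ to be isometric to one of $\R,\C,\mathbb{H},\mathbb{O}$, and alignment of the resulting orthonormal bases brings $p$ into the normal form $h_{a}$.

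The main obstacle will be the nonexistence side of Adams's theorem: although the spectral-sequence machinery of the paper forces the sharp dimension relation $m=2a,\,l=a+1$ and produces a sphere bundle $S^{a-1}\to S^{2a-1}\to S^{a}$, ruling out all $a\notin\{1,2,4,8\}$ is the Hopf-invariant-one problem and lies outside the toolkit developed here. Once the dimensions are fixed, the final identification of $p$ with $h_{a}$ is essentially a repackaging of Hurwitz's classification.
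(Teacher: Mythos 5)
Your approach matches the paper's own proof step by step: both deduce $\ii^{+}\equiv a$ and $m=2a$ from nondegeneracy along $\Omega=S^{l-1}$ and the involution $\omega\mapsto-\omega$; both note $X_p=\emptyset$, invoke Theorem~\ref{bundle} to force the single surviving differential $d_{l}$ to be an isomorphism, and deduce $l=a+1$; both observe that $\ker(\omega p)=0$ everywhere makes $p|_{S^{2a-1}}\colon S^{2a-1}\to S^{a}$ a proper submersion, hence a sphere bundle; and both finish by recognizing the bundle as a Hopf fibration. The paper packages the last step by citing Yiu for the fact that point-preimages of a quadratic map between spheres are spheres, and then writes simply that the bundle ``must be a Hopf fibration,'' leaving Adams's Hopf-invariant-one theorem and the Hurwitz normal form implicit; you make both of those ingredients explicit, which is a useful unpacking but not a different route.

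One step you should tighten: you claim that since $p^{l}$ has signature $(a,a)$, ``a direct computation in adapted orthonormal coordinates identifies this fiber with $S^{a-1}$.'' As stated this is not true for an arbitrary form of signature $(a,a)$; if the symmetric matrix $P^{l}$ had an eigenvalue in $(0,1)$, the locus $\{x\in S^{2a-1} : p^{l}(x)=1\}$ would be a lower-dimensional sphere. The missing input is that all eigenvalues of $P^{l}$ are $\pm1$. This follows from the constraint $|p(x)|^{2}=|x|^{4}$, which gives $|\langle P^{l}x,x\rangle|\le|x|^{2}$, hence every eigenvalue of $P^{l}$ lies in $[-1,1]$; for $x\in S^{2a-1}$ with $p^{l}(x)=1$ the Cauchy--Schwarz equality then forces $P^{l}x=x$, so the fiber lies in the $1$-eigenspace of $P^{l}$; the submersion gives the fiber dimension $a-1$, so that eigenspace has dimension at least $a$, and combined with $\ii^{+}(P^{l})=a$ this pins the spectrum of $P^{l}$ to $\{\pm1\}$ each with multiplicity $a$, after which the fiber is indeed $S^{a-1}$. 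Alternatively, cite Yiu's sphere-preimage result as the paper does, which sidesteps the eigenvalue analysis.
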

\begin{proof}
First observe that $\ii^{+}\equiv c$ for a  constant $c$ and that $m=2c.$ Then, since $p$ maps the sphere $S^{sc-1}$ to the sphere $S^{l-1},$ we have $$\emptyset=p^{-1}(\{0\})\subset \P^{sc-1}.$$
Thus Theorem \ref{bundle} implies that the differential $d_{l}$ must be an isomorphism and this forces $l=c+1.$ Moreover the condition $\ker(\omega p)=\{0\}$ for every $\omega\in S^{c-1}$ says also $p_{|S^{2c-1}}:S^{2c-1}\to S^{c}$ is a submersion. It is a well-known result (see \cite{Yiu}) that the preimage of a point trough a quadratic map between spheres is a sphere, and thus $p_{|S^{2c-1}}$ is the projection of a sphere-bundle between spheres, hence it must be a Hopf fibration.
\end{proof}

The situation in the case $\{\omega \in S^{l-1}\, : \,\ker(\omega p)\neq 0\}=\emptyset$ with only the assumption $X=\emptyset$ (which is weaker than $p(S^{m-1})\subset S^{l-1}$) is more delicate.
\end{example}

\begin{example}
For $i=1,\ldots,l$ let  $p_{i}:\R^{n_{i}}\to \R^{k+1}$ be a quadratic map and set $N=\sum_{i}n_{i}.$ Define the map
$$\oplus_{i}p_{i}:\R^{N}\to \R^{k+1}$$ by the formula
$$(x_{1},\ldots,x_{l})\mapsto \sum_{i=1}^{l}p_{i}(x_{i})\quad x_{i}\in \R^{n_{i}}.$$
Then for every $\omega \in S^{k}$ we have
$$\ii^{+}(\omega(\oplus_{i}p_{i}))=\sum_{i=1}^{l}\ii^{+}(\omega p_{i}).$$
In particular if each $p_{i}$ has constant positive index function with constant value $\mu_{i}$, then $\oplus_{i}p_{i}$ has also constant positive index function with constant value $\sum_{i}\mu_{i}.$\\
Generalizing the previous example, we consider now for $a=1,2,4,8$ the map $h_{a}:\R^{2a}\to\R^{a+1}$ defined above and we take for $n\in \N$ the map
$$n\cdot h_{a}\doteq \oplus_{i=1}^{n}h_{a}:\R^{2an}\to \R^{a+1}.$$
In coordinate the map $n\cdot h_{a}$ is written by:
$$(w,z)\mapsto(2\langle z,w\rangle,\|w\|^{2}-\|z\|^{2}),\quad w,z\in A^{n}.$$
Then for this map we have
$$\ii^{+}\equiv na,\quad\textrm{and}\quad (n\cdot \bar{h}_{a})^{*}\mathcal{L}_{na}=n(\bar{h}_a^*\L_{a})=\underbrace{\bar{h}_a^*\L_{a}\oplus \cdots\oplus \bar{h}_a^*\L_{a}}_{n}$$
The solution of $\{n\cdot h_{a}=0\}$ on the sphere $S^{2a-1}$ is diffeomorphic to the Stiefel manifold of 2-frames in $A^{n},$ and it is a double cover of
$$n\cdot K_{a}\doteq\{n\cdot h_{a}=0\}\subset \P^{2na-1}.$$
We can proceed now to the calculation of the $\Z_{2}$-cohomology of $n\cdot K_{a},$ using Theorem \ref{bundle}: we only need to compute $d_{a+1},$ i.e. $w_{a}(n\bar{h}_a^*\L_{a}).$
Since $w_{a}(\bar{h}_a^*\L_{a})=w_{a,a}\neq 0,$ and $w_{k}(\bar{h}_a^*\L_{a})=0$ for $k\neq 0,k\neq a,$ then we have
$$w_{a}(n\bar{h}_a^*\L_{a})=n \,\textrm{mod}\,2\in \Z_{2}=H^{a}(S^{a}).$$

\end{example}

There are some cases in which the problem of describing the index function can be reduced to a simpler problem; this is the case of a quadratic map defined by a bilinear one. We start noticing the following.
\begin{fact}\label{matrix}
Let $L$ be a $n\times n$ real matrix and $Q_{L}$ be the symmetric $2n\times 2n$ matrix defined by:

$$
Q_{L}=\left(\begin{array}{cc}
0 & L\\
^{t}L &0\\
\end{array} \right)
$$
Then, setting $q_{L}$ for the quadratic form defined by $x\mapsto \langle x,Q_{L}x\rangle$ we have:
$$\ii^{+}(q_{L})=\textrm{rk}(L).$$
\begin{proof}
Let $x=(z,w)\in \R^{2n}\simeq \R^{n}\oplus \R^{n};$ then $Q_{L}\left(\begin{smallmatrix}z\\w\end{smallmatrix}\right)=\left(\begin{smallmatrix}Lw\\^tLz\end{smallmatrix}\right ).$ Hence
$\ker Q_{L}=\ker {}^{t}L\oplus\ker L$ and $$\dim (\ker Q_{L})=2\dim(\ker L).$$ Consider now the characteristic polynomial $f$ of $Q_{L}$: $$f(t)=\det (Q_{L}-t I)=\det(t^{2}I -{}^{t}LL)=(-1)^{n}\det ({}^{t}LL-t^{2}I)=(-1)^{n}g(t^{2})$$ where $g$ is the characteristic polynomial of ${}^{t}LL.$ Let now $\lambda\in \R$ be such that $g(\lambda)=0;$ since ${}^{t}LL\geq 0,$ then $\lambda \geq 0$ and $f(\pm \sqrt{\lambda})=0.$ Since $Q_{L}$ is diagonalizable, then for each one of its eigenvalues algebraic and geometric multiplicity coincide, hence
$$\ii^{+}(q_{L})=\ii^{-}(q_{L})=\frac{1}{2}\textrm{rk}(Q_{L}).$$
It follows that
$$\ii^{+}(q_{L})=\frac{1}{2}(2n -\dim( \ker Q_{L}))=\textrm{rk}(L).$$
\end{proof}
\end{fact}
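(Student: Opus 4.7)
The plan is to exploit the block anti-diagonal structure of $Q_L$ to force a symmetry between positive and negative eigenvalues, then to reduce the problem to computing $\dim\ker Q_L$. First, I would observe that the involution $\sigma(z,w) = (z,-w)$ on $\R^{2n}$ satisfies $\sigma Q_L \sigma = -Q_L$, so $Q_L$ and $-Q_L$ are similar and therefore have the same spectrum with multiplicities. This immediately yields $\ii^+(q_L) = \ii^-(q_L)$.

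Combining this equality with the tautology $\ii^+(q_L) + \ii^-(q_L) + \dim\ker Q_L = 2n$ reduces the problem to computing $\dim\ker Q_L$. For this, note that $Q_L(z,w) = (Lw,\,{}^tLz)$, so $(z,w) \in \ker Q_L$ iff $w \in \ker L$ and $z \in \ker {}^tL$. Since $\dim\ker L = n - \mathrm{rk}(L)$ and a matrix and its transpose have equal rank, $\dim\ker Q_L = 2(n - \mathrm{rk}(L))$. Substituting gives $\ii^+(q_L) = \mathrm{rk}(L)$ as claimed.

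If the involution argument felt unmotivated, I would fall back on a more concrete variant using the singular value decomposition $L = U\Sigma V^t$: the change of basis $(z,w) \mapsto (U^tz, V^tw)$ puts $Q_L$ into block anti-diagonal form with diagonal entries $\sigma_i$ along the off-diagonal blocks. After the obvious $2\times 2$ reshuffle this becomes a direct sum of blocks $\bigl(\begin{smallmatrix} 0 & \sigma_i \\ \sigma_i & 0 \end{smallmatrix}\bigr)$, each of which contributes exactly one positive eigenvalue $+\sigma_i$ and one negative eigenvalue $-\sigma_i$ whenever $\sigma_i \neq 0$. I do not expect any real obstacle here; the only subtlety is to present the pairing of eigenvalues cleanly, and either the involution argument or the SVD computation handles this in a few lines.
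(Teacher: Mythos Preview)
Your proof is correct and follows the same overall architecture as the paper's: compute $\ker Q_L$ explicitly, show that the positive and negative indices coincide, and conclude via $\ii^+ + \ii^- + \dim\ker Q_L = 2n$. The only difference is in how the symmetry $\ii^+(q_L)=\ii^-(q_L)$ is established: the paper computes the characteristic polynomial $\det(Q_L - tI) = (-1)^n g(t^2)$ with $g$ the characteristic polynomial of ${}^tLL$, so that nonzero eigenvalues come in pairs $\pm\sqrt{\lambda}$; you instead observe directly that the orthogonal involution $\sigma=\bigl(\begin{smallmatrix} I & 0\\ 0 & -I\end{smallmatrix}\bigr)$ conjugates $Q_L$ to $-Q_L$. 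Your route is a touch cleaner and avoids the determinant identity, while the paper's route makes the eigenvalue pairing explicit and connects to the singular values of $L$; either way the argument is only a few lines, and your SVD fallback is essentially a coordinatized version of the same pairing.
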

In particular if $b:\R^{n}\times \R^{n}\to \R^{k+1}$ is a bilinear antisymmetric map whose components are defined by $$(x,y)\mapsto \langle (x,y),\left(\begin{smallmatrix}0&B_{i}\\{}^{t}B_{i}&0\end{smallmatrix}\right)(x,y)\rangle$$ for certain real squared matrices $B_{i},\,i=1,\ldots,k+1,$ then we can consider the quadratic map $$p_{b}:\R^{2n}\to \R^{k+1}$$ defined by $(x,y)\mapsto b(x,y).$
In this case we define for $\omega\in S^{k}$ the matrix $\omega B$ by
$$\omega B= \omega_{1}B_{1}+\cdots+\omega_{k+1}B_{k+1}.$$
By the previous fact we have
$$\ii^{+}(\omega p_{b})=\textrm{rk}(\omega B).$$

\begin{example}
Let $\R^{8}$ be identified with the space of pairs of $2\times 2$ real matrices. We apply the previous consideration to describe the topology of
$$\Gamma=\{(X,Y)\in \R^{8}\, : \, [X,Y]=0\}.$$
Since the equation for $\Gamma$ are homogeneous, it is a cone, and we can study the homology of its projectivization $$\P(\Gamma)\subset \P^{7}.$$
If we define $V=\{(X,Y)\in \R^{8}\, : \, \textrm{tr}(X)=\textrm{tr}(Y)=0\}$ and $\Gamma_{V}=\Gamma \cap V,$ then it is readily seen that
$$\Gamma=\Gamma_{V}\oplus \R^{2}.$$
We proceed first to the computation of $H_{*}(\P(\Gamma_{V}))$ using the above theorems.\\
In coordinates $(X,Y)=(\left(\begin{smallmatrix}x&y\\z&-x\end{smallmatrix}\right),\left(\begin{smallmatrix}w&t\\s&-w\end{smallmatrix}\right))$ we have
$$\{[X,Y]=0\}\cap V=\{tz-ys=xt-yw=sx-wz=0\}.$$
Consider the following matrices
$$B_{1}=\left(\begin{array}{ccc}0&0&0\\0&-1&0\\0&0&1\end{array}\right),\quad B_{2}=\left(\begin{array}{ccc}0&0&1\\-1&0&0\\0&0&0\end{array}\right),\quad B_{3}=\left(\begin{array}{ccc}0&1&0\\0&0&0\\-1&0&0\end{array}\right)$$
and the bilinear map $b:\R^{3}\times \R^{3}\to \R^{3}$ whose components are $(x,y)\mapsto \langle x, B_{i}y\rangle.$
Then  $p_{b}:V\to \R^{3}$ equals the quadratic map defined by $(X,Y)\mapsto [X,Y]$ (we are using the above notations for the quadratic map $p_{b}$ defined by a bilinear map $b$). It follows that $$\Gamma_{V}=V\cap \Gamma=\{p_{b}=0\}.$$ Using $\omega B$ for the matrix $\omega_{1}B_{1}+\omega_{2}B_{2}+\omega_{3}B_{3},$ then by the previous fact we have
$$ \ii^{+}(\omega p_{b})=\textrm{rk}(\omega B)\,\,\, \forall \omega \in S^{2}.$$
Let $\omega Q_{b}$ the symmetric matrix associated to $\omega p_{b}$ by the rule $(\omega p_{b})(x)=\langle x,\omega Q_{b}x\rangle.$ Then $$\omega Q_{b}=\left (\begin{array}{cc}0&\omega B\\{}^{t}\omega B&0\end{array}\right)$$
The matrix $\omega B,$ for $\omega=(\omega_{1},\omega_{2},\omega_{3})\in S^{2}$ has the following form:
$$\left(\begin{array}{ccc}0&\omega_{3}&\omega_{2}\\-\omega_{2}&-\omega_{1}&0\\-\omega_{3}&0&\omega_{1}\end{array}\right)$$
and we immediatly see that $\textrm{rk}(\omega B)=2$ for $\omega \neq 0;$ this gives $$\ii^{+}(\omega p_{b})=2\quad \forall \omega \in S^{2}.$$
Since $\ii^{+}\equiv 2,$ we can apply Theorem \ref{bundle}; letting $(E_{r},d_{r})$ be the spectral sequence of Theorem A converging to $H_{n-*}(\P(\Gamma_{V})),$ we have the following picture for $E_{2}=E_{3}:$
$$
\begin{array}{|c|c|c|c}
\Z_2&0&0&0\\
\Z_2&0&0&0\\
\Z_2&0&0&0\\
\Z_{2}&0&0&0\\
0&0&0&\Z_{2}\\
0&0&0&\Z_2\\
\hline
\end{array}
$$
Consider the section $\sigma:S^{2}\to S^{2}\times \R^{6}$ defined for $\omega=(\omega_{1},\omega_{2},\omega_{3})\in S^{2}$ by:
$$\sigma(\omega)=(\omega_{2},0,\omega_{1},-\omega_{1}\omega_{3},\omega_{2}\omega_{3},\omega_{1}^{2}+\omega_{2}^{2}).$$
Since for every $\omega\in S^{2}$
$$(\omega Q_{b})\sigma(\omega)=\sigma(\omega)$$
then it follows that $\sigma$ is  a section of the bundle $\bar{p}_{b}^{*}\L_{2}.$ The index sum of the zeroes of $\sigma$ (which occur only at $(0,0,1),(0,0,-1)\in S^{2}$) is even, thus the euler class $e$ of $\bar{p}_{b}^{*}\L_{2}$ is even. This implies
$$w_{2}(\bar{p}_{b}^{*}\L_{2})=e \,\textrm{mod}\,2=0.$$
Thus by Theorem \ref{bundle} we have $d_{3}\equiv 0$ and $E_{2}=E_{3}=E_{\infty}.$ It follows that the only nonzero homology groups of $\P(\Gamma_{V})$ are:
$$H_{0}(\P(\Gamma_{V}))=H_{3}(\P(\Gamma_{V}))=\Z_{2}\quad \textrm{and}\quad H_{1}(\P(\Gamma_{V}))=H_{2}(\P(\Gamma_{V}))=(\Z_{2})^{2}.$$
Actually since the equations for $\Gamma_{V}$ are given by the vanishing of the minors of the matrix $\left(\begin{smallmatrix}x&z&y\\w&s&t\end{smallmatrix}\right),$ then $\Gamma_{V}$ is the Segre variety $\Sigma_{2,1}\simeq \P^{1}\times \P^{2}.$\\
Notice also that in the case $\ii^{+}\equiv \mu$ if we take $l_{v}^{+}=\{t^{2}v\}_{t\in\R}$, then we can easily calculate the homology of $X_{l_{v}^{+}}=\{x\in \P^{n}\, : \,p(x)\in l_{v}^{+}\})$ (the preimage of a half line): using Theorem 2 we immediatly see that $E_{2}=E_{\infty}$ which implies $H_{*}(X_{l_{v}^{+}})\simeq H_{*}(\P^{n-\mu}).$
\end{example}
\begin{example}
Consider the map $p:\R^{4}\to \R^{3}$ given by
$$(x_{0},x_{1},x_{2},x_{3})\mapsto(x_{0}x_{2}-x_{1}^{2},x_{0}x_{3}-x_{1}x_{2},x_{1}x_{3}-x_{2}^{2}).$$
Then $C=\{p=0\}\subset \P^{3}$ is the rational normal curve, the so called twisted cubic. In this case $\Omega=S^{2}$ and the set $\{\omega \in \Omega\, : \, \ker (\omega p)\neq 0\}$ consists of two disjoint ovals in $S^{2},$ bounding two disks $B_{1}, B_{2}.$ Then $S^{2}$ is the disjoint union of the sets $\textrm{Int} (B_{1}), \partial B_{1},R,\partial B_{2},\textrm{Int}(B_{2}),$ on which the function $\ii^{+}$ is constant with value respectively $2,1,2,2,2.$ Then
$$\Omega^{1}=S^{2},\quad \Omega^{2}=S^{2}\backslash \partial B_{1}, \quad \Omega^{3}=\emptyset$$
and the second term of the spectral sequence $(E_{r},d_{r})$ of Theorem A converging to $H_{3-*}(C)$ is the following:
$$
\begin{array}{|c|c|c|c}

\Z_2&0&0&0\\
\Z_{2}&0&0&0\\
0&\Z_{2}&0&0\\
0&0&0&\Z_2\\
\hline
\end{array}
$$

The differential $d_{2}:E_{2}^{1,1}\to E_{2}^{3,0}$ is an isomorphism; hence $E_{3}=E_{\infty}$ has the following picture:

$$
\begin{array}{|c|c|c|c}

\Z_2&0&0&0\\
\Z_{2}&0&0&0\\
0&0&0&0\\
0&0&0&0\\
\hline
\end{array}
$$

From the previous, using Theorem C, we see that $j_{*}:H_{1}(C)\to H_{1}(\P^{3})$ is an isomorphism (we can check this fact also by noticing that, since $C$ is a curve of degree $3,$ then the intersection number of $C$ with a generic hyperplane $H\subset \P^{3}$ is odd).
\end{example}

\end{document}